\def\N{\mathbb{N}}
\def\Z{\mathbb{Z}}
\def\R{\mathbb{R}}
\def\bold{\mathbf}
\newtheorem{thm}{Theorem}
\newtheorem*{thm*}{Theorem}
\newtheorem*{claim*}{Claim}
\newtheorem*{dfn*}{Definition}
\newtheorem{lemma}[thm]{Lemma}
\newtheorem*{lemma*}{Lemma}
\newtheorem{prop}[thm]{Proposition}
\newtheorem*{prop*}{Proposition}
\newtheorem{cor}[thm]{Corollary}
\newtheorem*{cor*}{Corollary}
\newtheorem{conj}[thm]{Conjecture}
\newtheorem*{conj*}{Conjecture}
\newtheorem*{quest*}{Question}
\newtheorem*{exer*}{Exercise}
\newtheorem{example}[thm]{Example}
\newtheorem*{example*}{Example}
\newtheorem*{examples*}{Examples}
\newtheorem*{formula*}{Formula}
\theoremstyle{remark}
\newtheorem*{rmk*}{Remark}
\newtheorem*{rmks*}{Remarks}
\title{Invariants of Random Knots and Links}
\author{
Chaim Even-Zohar 
\thanks{Department of Mathematics, Hebrew University, Jerusalem 91904, Israel. \href{mailto:chaim.evenzohar@mail.huji.ac.il}{chaim.evenzohar@mail.huji.ac.il}}
\and 
Joel Hass 
\thanks{Department of Mathematics, University of California, Davis, California 95616. \href{mailto:hass@math.ucdavis.edu}{hass@math.ucdavis.edu}}
\and 
Nati Linial 
\thanks{Department of Computer Science, Hebrew University, Jerusalem 91904, Israel. \href{mailto:nati@cs.huji.ac.il}{nati@cs.huji.ac.il}}
\and
Tahl Nowik
\thanks{Department of Mathematics, Bar-Ilan University, Ramat-Gan 5290002, Israel. \href{mailto:tahl@math.biu.ac.il}{tahl@math.biu.ac.il}}
\and
\thanks{This project was supported by BSF grant 2012188.}
}
\begin{document}

\maketitle

\begin{abstract}
We study random knots and links in $\R^3$ using the Petaluma model, which is based on the petal projections developed in~\cite{adams2012knot}. In this model we obtain a formula for the limiting distribution of the linking number of a random two-component link. We also obtain formulas for the expectations and the higher moments of the Casson invariant and the order-3 knot invariant $v_3$. These are the first precise formulas given for the distributions and higher moments of invariants in any model for random knots or links. We also use numerical computation to compare these to other random knot and link models, such as those based on grid diagrams. 

\medskip
\noindent \textbf{MSC:} 57M25 $\cdot$ 60B05
\end{abstract}

\begin{center}
\tikz[thick]{\foreach \angle in {16.36363636, 49.09090909, ..., 360} \draw (0,0) .. controls +(\angle:1.8) and +(\angle+16.36363636:1.8) .. (0,0);
\pgfresetboundingbox \clip (-1.5,-1.00) rectangle (1.5,1.00);}
\end{center}

\setcounter{section}{-1}
\section{Introduction}\label{introsect}

In this paper we study the distribution of finite type invariants of random knots and links. Our purpose is to investigate properties of {\em typical} knots, avoiding biases caused by focusing attention on a limited set of commonly studied examples. While tables of knots with up to 16 crossings have been compiled \cite{hoste1998first}, and much is understood about infinite classes of knots, such as torus and alternating knots, we suspect that our view of the collection of all knots is distorted by the choices that simplicity and availability have given us. We have little knowledge of the distribution of knot invariants such as the Jones polynomial, or the linking number, among highly complicated knots and links. Studying a model of {\em random knots} allows us to probe for typical behavior beyond the familiar classes. As we elaborate below, the spectacular success of the probabilistic method in combinatorics makes us hopeful that it has much to offer in topology as well.

A variety of models for random knots and links have been studied by physicists and biologists, as well as mathematicians. Common models are based on random 4-valent planar graphs with randomly assigned crossings, random diagrams on the integer grid in $\R^2$, Gaussian random polygons~\cite{buck1994random, arsuaga2007linking, micheletti2011polymers}, and random walks on lattices in $\R^3$ \cite{sumners1988knots, pippenger1989knots}. While many interesting numerical studies have been performed, and interesting results obtained in these models, there have been few rigorous derivations of associated statistical measures. 

In this paper we study a model of random knots and links called the {\em Petaluma model}, based on the representation of knots and links as petal diagrams that was introduced by Adams and studied in \cite{adams2012knot}. The Petaluma model has the advantage of being both universal, in that it represents all knots and links, and combinatorially simple, so that knots have simple descriptions in terms of a single permutation. We obtain here what appears to be the first precise formulas in any random model for the distributions of knot and link invariants. 

We first derive a formula for the limiting distribution of the linking number of a random two component link. This is shown to have an unexpected connection to a distribution previously studied by physicists in another context. As it turns out, the distribution of the linking number is identical to that of the signed area enclosed by a random path on the integer lattice in the plane. This has a physical interpretation as the flux of a vector field through a random planar curve. We develop a variation of the approach of Mingo and Nica to a closely related problem on signed area~\cite{mingo1998distribution}, in order to analyze this model and to obtain the linking number distribution. 

We then study the distributions of the two simplest knot invariants of finite type, namely the order-2 Casson invariant $c_2$ and the order-3 invariant $v_3$, associated to the Jones polynomial. We are able to find expressions for the expectation, variance and higher moments of these two invariants. We present these results after describing our model for random knots and links and reviewing the construction of finite-type invariants.

\subsection*{Knots and Petal Diagrams}

A \emph{knot} is a simple closed curve in $\R^3$, up to equivalence generated by an isotopy of $\R^3$~\cite{adams1994knot}, while a \emph{link} is a disjoint union of simple closed curves, with the same equivalence. The curves can be taken to be either smooth or piecewise-linear (polygonal). Knots and links are commonly represented by {\em diagrams}, which are projections of a knot or link to the plane in which a finite number of points have two preimages, and each such \emph{crossing point} is marked to indicate which point lies above the other in $\R^3$. A diagram suffices to recover a knot or link up to an isotopy of $\R^3$.

Adams et al.\ showed that an embedding of a knot or link in $\R^3$ can be chosen so that its projection has a single crossing, though the multiplicity with which the knot projects to this crossing is now allowed to be larger than two \cite{adams2012knot}. Furthermore, in the case of knots, the projected arcs can be arranged so that they trace out a rose-like curve. A \emph{petal diagram} is a planar curve, comprised of $2n+1$ straight segments crossing at a single point, and arcs connecting consecutive pairs of segment tips. This creates $2n+1$ loops with disjoint interiors, called {\em petals}. Figure~\ref{fig_petal1} shows a petal diagram with $9$ petals. 

Along with its projection, a petal diagram comes with information on how to construct a knot in $\R^3$ that projects to the diagram. The additional information specifies the height of the arcs passing above the single crossing. The ordering of these heights is specified by a permutation $\pi\in S_{2n+1}$, with $\pi(i)$ giving the knot's height as it passes over the center for the $i$th time. This representation is \emph{universal}, so that all knots are realized by some petal diagram \cite[Theorem 1]{adams2012knot}. Each permutation determines a knot, and in the {\em Petaluma model} we define a random knot $K_{2n+1}(\pi)$ to be a knot with a $2n+1$ petal diagram and permutation $\pi \in S_{2n+1}$, drawn uniformly at random.

\begin{figure}[htb]
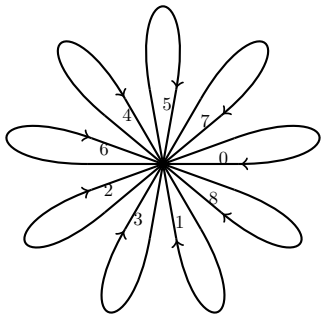

\begin{center}
\tikz[thick,decoration={markings,mark=at position 0.05 with {\arrow{<}}}]{
\foreach \angle in {0, 40, ..., 320} \draw[postaction={decorate}] (\angle:1) .. controls +(\angle:1.5) and +(\angle+20:1.5) .. (\angle+20:1);
\foreach \angle in {0, 40, ..., 320} \draw (\angle:1) -- (\angle:-1);
\foreach \angle/\num in {0/0, 80/1, 160/2, 120/3, 240/4, 280/5, 200/6, 320/7, 40/8} \node[scale=0.7] at (-\angle+6:0.8) {$\num$};
\pgfresetboundingbox \clip (-2.25,-2.00) rectangle (2.25,2.50);}
\end{center}
\caption{A petal diagram with $9$ petals and associated permutation $(0,6,8,4,1,5,3,7,2)$.}
\label{fig_petal1}
\end{figure}

This construction extends to links. A \emph{two-component petal diagram} consists of two planar curves, each of which transversely passes $2n$ times through a single point, as shown in Figure~\ref{fig_petal2} for $n=3$. Note that this diagram is not composed of two standard petal diagrams, as its restriction to each component is a \emph{pre-petal diagram}~\cite{adams2012knot}, with one big loop whose interior contains the other loops. However, the transition between pre-petal and petal diagrams is immediate, and this diagram is the closest to a petal diagram that one can get for links having more than one component.

As with a knot, a two-component link is uniquely determined by a permutation $\pi \in S_{4n}$. The strands of the first component pass above the crossing at heights $\pi(1),\ldots,\pi(2n)$ and the strands of the second at heights $\pi(2n+1),\ldots,\pi(4n)$. This gives a universal model for two-component links~\cite[Theorem 2]{adams2012knot}, and the Petaluma model for a random two-component link $L_{4n}(\pi)$, is obtained by drawing $\pi$ uniformly at random from $S_{4n}$. This model can be adjusted to allow for unequal numbers of petals in the two components, or a higher number of components.

\begin{figure}[htb]
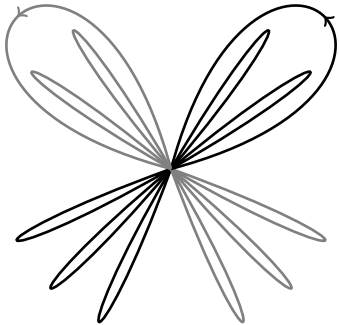

\begin{center}
\tikz[thick,decoration={markings,mark=at position 0.5 with {\arrow{>}}}]{
\foreach \angle/\color in {30/black,50/black,120/gray,140/gray,200/black,220/black,240/black,290/gray,310/gray,330/gray} \draw[color=\color,line width=1] (0,0) .. controls +(\angle:3) and +(\angle+10:3) .. (0,0);
\foreach \angle/\color in {15/black,105/gray} \draw[postaction={decorate},color=\color,line width=1] (0,0) .. controls +(\angle:4.4) and +(\angle+60:4.4) .. (0,0);
\pgfresetboundingbox \clip (-2.25,-1.5) rectangle (2.25,2.5);
}
\end{center}
\caption{A two-component petal diagram with $12$ petals.}
\label{fig_petal2}
\end{figure}

In one of its key properties, the Petaluma model differs from other models of random knots such as closed random walks in $\R^3$. The typical step length in the Petaluma model is of the same order as the diameter of the whole knot. Models which take substantially shorter steps tend to create small local entanglement which significantly affect the nature of the generated knots.

A {\em knot invariant} associates to a curve a quantity, such as a real number, a polynomial, or a group, that depends only on the knot type of the curve. Many invariants have been introduced to help in understanding the structure of knots, including the knot group, knot polynomials, the knot genus, the bridge number and the crossing number. Link invariants are similarly defined. 

An important class of knot and link invariants, called {\em finite type invariants}, were introduced by Vassiliev~\cite{vassiliev1988cohomology}, and have since been extensively studied~\cite{chmutov2012introduction}. Many knot invariants are finite type, including the coefficients of the Conway and of the modified Jones polynomials. The first two non-trivial finite type knot invariants, $c_2(K)$ and $v_3(K)$ are determined by coefficients of these polynomials. Two-component links admit a non-trivial order-1 invariant, the linking number $lk(L)$. A description of finite type invariants appears at the start of Section~\ref{linksect}.

\subsection*{Results}

We study the behavior of finite type invariants of knots and links in the Petaluma model. We view a knot invariant as a random variable on the set of all diagrams with $2n+1$ petals, and ask for its distribution and for its asymptotic growth as $n\to\infty$.

Recall that the $k$th {\em moment} of a random variable $X$ is the expected value $E[X^k]$. The moments of an invariant give a concrete indication of its value on a randomly sampled knot or link. To understand the distribution of an invariant as $n \to \infty$ we must determine how to normalize it as $n$ grows. The following theorems determine the order of growth of the finite type invariants $lk$, $c_2$, and $v_3$. 

\begin{thm}\label{moments1}
$E[(lk(L_{4n}))^k]$ is a polynomial in $n$ of degree $\leq k$.
\end{thm}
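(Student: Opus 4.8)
First I would write the linking number as a signed sum over the crossings of the two-component petal diagram and then expand its $k$-th power. All crossings between the two components occur at the single center point, and there is exactly one for each pair $(p,q)$ with $p$ a strand of the first component ($1\le p\le 2n$) and $q$ a strand of the second ($1\le q\le 2n$); its sign is $\delta_{pq}\,\mathrm{sign}(\pi(p)-\pi(2n+q))$, where $\delta_{pq}\in\{\pm1\}$ is the fixed local orientation sign determined by the angular directions of the two strands at the center, and the second factor records which strand passes above. Thus
\[
  lk(L_{4n}(\pi))=\tfrac12\sum_{p=1}^{2n}\sum_{q=1}^{2n}\delta_{pq}\,\sigma_{pq},
  \qquad \sigma_{pq}:=\mathrm{sign}\bigl(\pi(p)-\pi(2n+q)\bigr),
\]
and consequently
\[
  E\bigl[lk(L_{4n})^k\bigr]=\frac1{2^k}\sum_{(p_1,q_1),\dots,(p_k,q_k)}
  \Bigl(\prod_{t=1}^k\delta_{p_tq_t}\Bigr)\,E\Bigl[\prod_{t=1}^k\sigma_{p_tq_t}\Bigr].
\]

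Since $\pi$ is a uniform random permutation, the relative order of its values on any fixed set of positions is exchangeable, so $E[\prod_t\sigma_{p_tq_t}]$ depends only on the coincidence pattern of the $2k$ positions $p_1,\dots,p_k,2n+q_1,\dots,2n+q_k$. I would encode this pattern by the bipartite multigraph $H$ whose vertices are the distinct first- and second-component positions and whose $k$ edges are the pairs $\{p_t,2n+q_t\}$. Assigning independent uniform weights to the positions shows that $E[\prod_t\sigma_{p_tq_t}]=0$ whenever $H$ contains an \emph{isolated edge}, i.e.\ a pair $(p_t,q_t)$ with $p_t\ne p_s$ and $q_t\ne q_s$ for all $s\ne t$: that factor then splits off as an independent mean-zero sign. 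Hence only patterns in which every edge shares a vertex with another edge contribute, and each such $H$ is a disjoint union of connected blocks carrying at least two edges apiece.

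Grouping the $k$-tuples by their pattern, the contribution of a surviving $H$ equals an $n$-independent constant times the weighted count $\sum\prod_t\delta_{p_tq_t}$ over all tuples realizing $H$, and this count factorizes over the connected blocks of $H$. The theorem then reduces to the claim that the weighted count attached to a connected bipartite multigraph with $e$ edges is a polynomial in $n$ of degree at most $e$; summing over the blocks, which together use all $k$ edges, gives total degree at most $k$ (with the top degree reached, for instance, by the patterns that are unions of doubled edges, where $\prod\delta=1$ and no cancellation occurs, and with everything cancelling to $0$ when $k$ is odd, by the measure-preserving symmetry that replaces every height by its complement and thereby negates $lk$). The main obstacle is precisely this degree claim: a connected pattern on $v>e$ vertices would naively contribute degree $v$, and only the structure of the array $(\delta_{pq})$ prevents it. This is where the petal construction is used: the strand directions of each component are placed essentially equally spaced around the center, so $(\delta_{pq})$ behaves like a discretized rotation-by-$\pi$ antisymmetric sign kernel whose single-variable partial sums, such as $\sum_q\delta_{pq}$, vanish or are $O(1)$; summing out a degree-one vertex of $H$ therefore saves a factor of $n$ rather than costing one, and iterating over $H$ yields the bound.

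I expect the uniform control of these cancellations — over all patterns and all $k$ at once — to be the technical heart of the proof. A cleaner organizing principle may be to first establish the distributional identity (developed later in the paper) between $lk(L_{4n})$ and the signed area enclosed by a lattice random walk of length $\Theta(n)$: in the signed-area functional the analogue of $\delta_{pq}$ is the genuinely antisymmetric wedge $v_i\wedge v_j$ of two steps, so the cancellations are automatic and the polynomiality of the moments follows from a random-walk computation in the spirit of Mingo and Nica. Either way the skeleton is the same: write $lk$ as a quadratic-size signed sum, discard the unpaired terms in expectation, and bound the surviving weighted counts.
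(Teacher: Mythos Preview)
Your setup is correct and is the paper's computation in dual coordinates. Once you observe that $\delta_{pq}=(-1)^{p+q}$, your bipartite multigraph $H$ encodes exactly the same data as the paper's pair of set-partitions $\xi,\eta\vdash[k]$ (recording which $x_i$'s, resp.\ $y_i$'s, coincide in the expansion of $\ell^k$); and after splitting your injection count by the parities of the chosen positions, your weighted $\delta$-count for a fixed $H$ becomes literally the product
\[
\Bigl(\sum_{\varepsilon:\xi\to\pm1}(n)_{x^+}(n)_{x^-}\,s(\xi,\varepsilon)\Bigr)
\Bigl(\sum_{\delta:\eta\to\pm1}(n)_{y^+}(n)_{y^-}\,s(\eta,\delta)\Bigr)
\]
that appears in the paper. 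So the frameworks coincide.

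The genuine gap is that you do not prove the degree bound. Your ``sum out a degree-one vertex, then iterate'' sketch is the right intuition but is not a proof as written: you would first need polynomiality of the weighted count in $n$ (not addressed), the correction term $-\sum_w(-1)^{\mathrm{pos}(w)}$ after one leaf-summation entangles the remaining vertices so that ``iterating'' is not straightforward, and a general connected $H$ need not have any degree-one vertex at all. The paper bypasses all of this with a single Fourier-analytic lemma (Lemma~\ref{lnlemma}): for every $\xi\vdash[k]$ the sum $\sum_{\varepsilon}(n)_{x^+}(n)_{x^-}\,s(\xi,\varepsilon)$ is a polynomial in $n$ of degree at most $k/2$. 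The proof writes $(n)_a(n)_b=\sum_r\tfrac{(a)_r(b)_r}{r!}(n)_{a+b-r}$ and reads the $\varepsilon$-sum as the inner product of $(x^+)_r(x^-)_r$ with the character $\chi(J)$ on $\{\pm1\}^{|\xi|}$, where $J$ is the set of odd-sized parts; since $(x^+)_r(x^-)_r\in\mathrm{span}\{\chi(I):|I|\le 2r\}$, orthogonality kills every $r<|J|/2$, and the elementary bound $|J|\ge 2|\xi|-k$ gives $|\xi|-r\le k/2$ for the surviving terms. Applying this to both factors yields degree $\le k$ in one stroke, with no need for the connected-component decomposition or the isolated-edge observation --- both of which are correct but, in the paper's organization, unnecessary.
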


\begin{thm}\label{moments2}
$E[(c_2(K_{2n+1}))^k]$ is a polynomial in $n$ of degree $2k$.
\end{thm}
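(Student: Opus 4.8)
The plan is to realize $c_2$ as an explicit polynomial in the permutation $\pi$ and then compute its moments by counting configurations. The structural input is that $c_2$, being a finite type invariant of order $2$, is on any fixed knot diagram a polynomial of degree at most $2$ in the $\{0,1\}$-valued crossing indicators (all $(m+1)$st finite differences of an order-$m$ invariant in the crossing variables vanish). In a petal diagram on $2n+1$ strands every crossing sits at the central point, between two strands $i<j$, with indicator $x_{ij}(\pi)=\mathbb 1[\pi(i)>\pi(j)]$; hence
\[
c_2\bigl(K_{2n+1}(\pi)\bigr)=\sum_{S}a_S\prod_{\{i,j\}\in S}x_{ij}(\pi),
\]
the sum over sets $S$ of at most two unordered pairs of strands. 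First I would pin down the coefficients: by Möbius inversion $a_S$ is an alternating sum of the values $c_2(K^{(T)})$, $T\subseteq S$, where $K^{(T)}$ is the petal knot obtained by flipping precisely the crossings of $T$ in the ascending petal diagram (the one with $\pi=\mathrm{id}$, which is the unknot since every crossing is first traversed as an undercrossing). Each $K^{(T)}$ is ``almost ascending'', so its knot type --- hence $c_2(K^{(T)})$, hence $a_S$ --- is controlled by the relative cyclic positions of the at most four strands appearing in $S$; in particular the $a_S$ are bounded integers.

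Next, raise this identity to the $k$th power. Since each $x_{ij}$ is an indicator, $x_{ij}^2=x_{ij}$, and collecting terms gives
\[
c_2\bigl(K_{2n+1}(\pi)\bigr)^k=\sum_{U}b_U\prod_{\{i,j\}\in U}x_{ij}(\pi),
\]
where $U$ runs over sets of at most $2k$ pairs of strands --- hence at most $4k$ distinct strand indices --- and $b_U=\sum a_{S_1}\cdots a_{S_k}$, summed over $k$-tuples with $S_1\cup\cdots\cup S_k=U$, is again a bounded integer. Taking expectations over a uniform $\pi\in S_{2n+1}$ yields $E[c_2(K_{2n+1})^k]=\sum_U b_U\,E_\pi\bigl[\prod_{\{i,j\}\in U}x_{ij}\bigr]$. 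The relative order of $\pi$ on any fixed set of $w$ positions is uniform over $S_w$, so $E_\pi[\prod x_{ij}]$ is the fraction of orderings of those $w$ strands compatible with the inequalities imposed by $U$ --- a quantity depending only on the combinatorial type of $U$, not on $n$. As the number of placements of a fixed type on $w$ distinct strands inside $\{1,\dots,2n+1\}$ is a polynomial in $n$ of degree $w$, this already shows $E[c_2(K_{2n+1})^k]$ is a polynomial in $n$, of degree at most $4k$.

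The real work --- and the step I expect to be the main obstacle --- is to improve the bound $4k$ to the sharp value $2k$, and to show it is attained. For the upper bound one must show that the contributions of all configuration types touching more than $2k$ strands cancel in the top-degree part. Here the geometry of the petal template enters essentially: the crossing sign between two strands is an oscillating function of their cyclic distance, and the per-type coefficients are built from products of such signs. I would make this dependence explicit and then either estimate the resulting signed sums over placements of the strands, or exhibit a sign-reversing symmetry of the top-degree configuration sum; either way, what has to be shown is that this sum vanishes --- a cancellation ultimately forced by the invariance of $c_2$ under Reidemeister moves, which severely constrains its crossing-variable polynomial. For the matching lower bound, the surviving degree-$2k$ contribution comes from $k$-tuples in which the configurations coincide in pairs, so the template signs enter squared, no cancellation takes place, and a direct count shows the coefficient of $n^{2k}$ is positive. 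Hence the degree is exactly $2k$.
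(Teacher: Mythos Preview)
Your setup gets you cleanly to ``polynomial in $n$ of degree at most $4k$'', and that part is fine. But the step you flag as ``the real work'' is not just the main obstacle --- it is essentially the entire content of the theorem, and your proposal does not supply a mechanism for it. Two concrete issues. First, your description of the coefficients $a_S$ is incomplete: in the petal/star template the order in which crossings occur along a strand, and hence whether a crossing is ascending or descending, is governed by the \emph{parities} of the strand indices (Proposition~\ref{order}), not merely by their cyclic order. Consequently $a_S$ carries a sign $(-1)^{\alpha+\beta+\gamma+\delta}$-type dependence on the parities of the strands in $S$, and it is precisely this parity dependence that drives the cancellation from $4k$ down to $2k$. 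Second, appealing to ``invariance under Reidemeister moves'' cannot produce that cancellation: the invariance is already fully encoded in the formula for $c_2$ you start from, and says nothing further about sums over placements of configurations in the template. What the paper actually does is make the parity dependence explicit, count placements with prescribed parity vector $\varepsilon$ as $\binom{n+z(\varepsilon)}{t}$ (Lemma~\ref{runs}), and then interpret the sum over $\varepsilon$ as an inner product in the Fourier basis of $\{\pm1\}^t$; the cancellation then comes from an orthogonality argument, but only \emph{after} a nontrivial exchange between patterns (Lemma~\ref{masstransfer}) that replaces the pattern-and-parity compatibility factors by parity-independent ones. Without something playing the role of that exchange lemma, the Fourier argument does not close, and a generic ``sign-reversing symmetry'' is not visible.

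Your lower-bound sketch is also off. The surviving degree-$2k$ contribution does not come only from $k$-tuples that coincide in pairs; the paper identifies a much larger family of \emph{principal patterns} (those in which each index appears at most twice and the singletons form consecutive pairs), and the leading coefficient is a signed sum over these, evaluated via Bernoulli numbers (Lemma~\ref{cycle} and Corollary~\ref{formula}). So ``signs enter squared, nothing cancels'' is not the right picture for the top-degree term either.
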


\begin{thm}\label{moments3}
$E[(v_3(K_{2n+1}))^k]$ is a polynomial in $n$ of degree $\leq 3k$.
\end{thm}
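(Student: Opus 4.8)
The plan is to turn $E[(v_3(K_{2n+1}))^k]$ into a finite sum of terms, each a constant times a polynomial in $n$ counting placements of a bounded combinatorial pattern, and then to bound the degree of each such polynomial. I would start from a Gauss-diagram (arrow-diagram) formula for $v_3$ of the type produced by Goussarov, Polyak and Viro: there is a fixed finite linear combination $A$ of arrow diagrams, each carrying three arrows, such that for every diagram $D$ of a knot $K$ one has $v_3(K)=\langle A,D\rangle$, a signed count of the triples of crossings of $D$ whose induced sub-configuration — the cyclic order on the knot of the six endpoints of the three chords, together with the over/under data of the three crossings — matches one of the diagrams occurring in $A$, each such triple contributing the product of its three crossing signs.

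Feeding the standard $(2n+1)$-petal diagram into this formula is the first main step. After perturbing the single multiple point, $K_{2n+1}(\pi)$ acquires a diagram whose crossings are indexed by pairs $\{a,b\}$ of the $2n+1$ strands through the centre: the two arcs of the crossing $\{a,b\}$ lie near the $a$-th and $b$-th passages through the centre, its over/under is governed by $\mathrm{sign}(\pi(a)-\pi(b))$, and its sign equals $\varepsilon_{ab}\,\mathrm{sign}(\pi(a)-\pi(b))$ for a fixed $\varepsilon_{ab}\in\{\pm1\}$ determined by the (fixed) petal arrangement. Substituting, $v_3(K_{2n+1}(\pi))$ becomes a sum over configurations of at most six of the $2n+1$ passages — each configuration prescribing a cyclic order, a coincidence pattern, and over/under relations — of a weight that depends on $\pi$ only through the relative order of $\pi$ on those at most six passages. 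Raising to the $k$-th power, expanding, and using linearity of expectation, $E[(v_3(K_{2n+1}))^k]$ becomes a sum over $k$-tuples of such configurations, each involving at most $6k$ passages; grouping the tuples by combinatorial type $T$ gives
\[
E[(v_3(K_{2n+1}))^k]=\sum_T c_T\,p_T(n),
\]
where $c_T$ is the product of the relevant fixed signs $\varepsilon$ with the probability that a uniformly random permutation realizes the prescribed order pattern on the $s(T)$ positions appearing in $T$ — a rational constant independent of $n$ — and $p_T(n)$ counts the ways to place the $s(T)$ passages of $T$ into a cyclic sequence of $2n+1$ slots respecting $T$. Each $p_T$ is a polynomial in $n$ of degree $s(T)\le 6k$, so $E[(v_3(K_{2n+1}))^k]$ is automatically a polynomial in $n$ of degree at most $6k$.

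The real work is to improve the bound to $3k$, and this is the step I expect to be the main obstacle. The mechanism I would try to establish is a vanishing principle: a type $T$ contributes $0$ unless every passage occurring in it is an endpoint of at least two of the $3k$ crossings (counted with multiplicity across the $k$ sub-configurations). Granting this, the $6k$ crossing endpoints of a surviving type are spread among its passages with at least two per passage, so $s(T)\le 3k$ and the degree bound follows. The vanishing should come from the structure of the petal crossing signs: if a passage $a$ meets only one crossing, $\{a,b\}$, then — since $c_T$ and the contributions of all other crossings do not depend on where $a$ sits among the remaining passages — summing the contribution over all cyclic positions and all slots of $a$ produces a signed sum of the numbers $\varepsilon_{ab}$ over the available slots, and this should cancel because the crossing sign of two petal strands is odd under sliding one strand across the other while the slots of the standard petal diagram are symmetrically placed. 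Making this rigorous requires describing the $v_3$ formula concretely enough on petal diagrams to pin down the signs $\varepsilon_{ab}$, and checking that the coupling between the position of $a$ and the over/under constraints coming from the arrow diagrams does not block the cancellation — for instance by summing simultaneously over all arrow diagrams in $A$ and all positions of $a$, so that the position constraints become vacuous and only the sign sum survives. This bookkeeping, carried out uniformly over all combinatorial types, is the crux; the rest is routine, since permutation-pattern probabilities on a bounded number of positions are constants and counts of cyclically ordered subsets of $[2n+1]$ are polynomials in $n$.

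The same scheme should also give Theorems~\ref{moments1} and~\ref{moments2}: one applies the order-$1$ Gauss formula for $lk$ to two-component petal diagrams, where surviving types have at most $k$ passages, and the order-$2$ formula for $c_2$, where surviving types have at most $2k$ passages; for $c_2$ one gets the \emph{exact} degree $2k$ by additionally exhibiting a surviving type with $s(T)=2k$ and $c_T\neq0$, which is why Theorem~\ref{moments2} is sharp while Theorems~\ref{moments1} and~\ref{moments3} are only upper bounds.
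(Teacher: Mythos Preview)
Your overall framework is right: start from a Gauss-diagram formula for $v_3$, expand $v_3^k$ over $k$-tuples of arrow configurations on the $2n+1$ strands of the star diagram, group by combinatorial type, and observe that each type contributes a polynomial in $n$ of degree at most $6k$. The paper does exactly this. The gap is in your proposed mechanism for cutting $6k$ down to $3k$.

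Your vanishing principle --- that a type contributes zero unless every strand is used by at least two of the $3k$ crossings, because a lone strand $a$ can be ``slid'' over all available slots and the crossing sign $\varepsilon_{ab}$ averages to zero --- does not hold as stated. In the star diagram the crossing sign is $(-1)^{a+b}$ (up to the ascending/descending correction), so what you are really summing is $(-1)^a$ over the interval of positions allowed to $a$ by the pattern; this sum is $O(1)$, not identically zero, and the endpoints of that interval themselves move with the other strands. So you get a degree drop, not a clean vanishing, and this is not enough to control the polynomial degree by the crude bound $s(T)\le 3k$. More tellingly, the paper notes that the needed cancellation \emph{fails} for three of the four known Gauss-diagram formulas for $v_3$ and succeeds only for the Goussarov--Polyak--Viro formula; this rules out any argument, like yours, that is agnostic to which formula is used.

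What the paper actually does is refine each type further by the parity vector $\varepsilon\in\{\pm1\}^t$ of the $t$ strand labels. The number of placements of a fixed type with fixed parities is exactly $\binom{n+z(\varepsilon)}{t}$ where $z(\varepsilon)$ counts the runs of $+$ in $\varepsilon$ (Lemma~\ref{runs}). Expanding $\binom{n+z}{t}=\sum_r\binom{n}{t-r}\binom{z}{r}$, the degree-$(t-r)$ piece in $n$ pairs against a function of $\varepsilon$ supported on Fourier characters $\chi(I)$ with $|I|\le 2r$. On the other side, each factor $T_i$ contributes a coefficient $c(T_i,\varepsilon)$ (encoding both the crossing signs and the compatibility constraints coming from Proposition~\ref{order}); the key lemma (Lemma~\ref{j3masstransfer}, verified by computer for the GPV formula) says that the Fourier expansion of $\sum_{S}c(S,\varepsilon)\,\mathcal I[\sigma\text{ respects }S]$ over all diagrams $S$ on a fixed label set $J$ is supported on $\chi(I)$ with $|I|\ge 2|J|-6$. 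Multiplying over $i$ and tracking how shared labels lower degrees, the product lands in $\mathrm{span}\{\chi(I):|I|>2r\}$ whenever $t-r>3k$, so those terms are killed by orthogonality. This is the analogue of the $c_2$ step where Lemma~\ref{masstransfer} replaces the parity-dependent compatibility factors $f(T_i,\varepsilon)$ by constants $F(T_i)$; for $v_3$ there is no such clean replacement, and the Fourier bound on the support is what survives.

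In short: the reduction from $6k$ to $3k$ is not a ``each strand used twice'' pigeonhole but a parity/Fourier cancellation that hinges on a specific property of the GPV formula, established by a finite computer check. Your sketch misses both the parity stratification and the formula-dependence.
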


\begin{rmk*}
In Theorems~\ref{moments1} and~\ref{moments3} there is equality for $k$ even, while the odd moments are $0$.
\end{rmk*}

We also determine the leading term of the $E[c_2^k]$ polynomial. This yields the limits of the moments of the normalized invariant $c_2/n^2$. For $k=1,2,3$ we find $E[c_2^k/n^{2k}] \xrightarrow{n \to \infty} 1/24$, $7/960$, and $5119/2419200$ respectively. Similarly we obtain the limiting variance $E[v_3^2/n^6] \xrightarrow{n \to \infty} 4649/2721600$.

In the case of the linking number of two-component links, we can do more. We exactly describe the limiting distribution of the properly normalized first order invariant $lk(L_{4n})/n$ as~$n \to \nobreak\infty$.

\begin{thm}\label{linking}
The limiting probability distribution of the normalized linking number $ lk(L_{4n})/4n$ is given by 
$$ P\left[\alpha < \frac{lk(L_{4n})}{4n} < \beta \right] \;\xrightarrow{\;n\rightarrow\infty\;}\;
\int^{\beta}_{\alpha} \frac{\pi}{\cosh^2(2 \pi x)}dx \;=\; \frac{\tanh(2 \pi \beta) - \tanh(2 \pi \alpha)}{2} \;\;.$$
\end{thm}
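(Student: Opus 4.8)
The plan is to show that the linking number $lk(L_{4n})$ can be written as an explicit quadratic-type sum over the random permutation $\pi\in S_{4n}$, extract a combinatorial description of this sum, and identify its limiting distribution with that of the signed area of a random lattice walk, whose law is already known to be $\pi/\cosh^2(2\pi x)$ (this is the distribution studied by physicists and by Mingo--Nica that the introduction alludes to). First I would compute $lk(L_{4n})$ from the two-component petal diagram: using the standard formula that the linking number is half the signed count of crossings between the two components, and noting that in this diagram all crossings occur at the single center point, I would express $lk$ as a signed sum over pairs $(i,j)$ with $i\in\{1,\dots,2n\}$ (first component) and $j\in\{2n+1,\dots,4n\}$ (second component). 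The sign of each such crossing is determined by (a) the relative heights $\pi(i)$ versus $\pi(j)$ (which strand is over), and (b) the local geometry of the two segments at the center, i.e.\ their cyclic/angular order, which is a fixed combinatorial datum of the pre-petal arrangement and does not depend on $\pi$. Summing, one gets $lk(L_{4n}) = \sum_{i,j} \varepsilon_{ij}\cdot \mathrm{sgn}(\pi(j)-\pi(i))$ for fixed signs $\varepsilon_{ij}\in\{\pm 1\}$ coming from the diagram.

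The second step is to recognize this quadratic form in $\pi$ as, up to normalization, the signed area enclosed by a random closed walk on $\Z$. Here I would reorganize the double sum: fixing the reading order of the strands of component one along its curve and likewise for component two, the pattern of $\varepsilon_{ij}$ is exactly the ``$i<j$ on the circle'' incidence structure, so that $lk$ becomes a telescoping/partial-sum expression. Concretely, if we let $X_i$ be $\pm 1$ according to whether the $i$th passage over the center (in a suitable cyclic order of all $4n$ strands) belongs to component one or two and carries a definite local sign, then the crossing sum collapses to something of the form $\sum_t S_t \delta_t$ where $S_t = X_1+\dots+X_t$ is a partial sum and $\delta_t$ records a component-change; this is precisely Pick's-formula / shoelace-type expression for the signed area of the lattice path with increments $X_t$. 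The randomness of $\pi$ makes the relative order of the heights of the $4n$ strands a uniformly random permutation, which — after conditioning on the fixed bipartition into the two components — is what drives the walk. The key identity to nail down is that the joint law of the relevant order statistics of $\pi$ is exchangeable in just the right way to match the increments of a simple (or simple-enough) random walk bridge, so that in the limit $lk(L_{4n})/4n$ converges to the normalized signed area.

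The third step is the analytic one: invoke (a suitable version of) the Mingo--Nica computation, adapted as the introduction promises, to evaluate the limiting distribution of the normalized signed area of the random lattice bridge and obtain the density $\pi/\cosh^2(2\pi x)$, with antiderivative $\tfrac12\tanh(2\pi x)$. I would do this by the method of moments: compute $E[(lk(L_{4n})/4n)^k]$ using the permutation formula from step one — which by Theorem~\ref{moments1} is a polynomial in $n$ of degree $\le k$, so the normalization by $4n$ (rather than $(4n)^k$... careful: degree $\le k$ means moments of $lk/n$ are $O(1)$) keeps all moments bounded and convergent — show these converge to the moments $m_k$ of the claimed density, and check the density is determined by its moments (it has exponential tails, so Carleman's condition holds). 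Matching $m_k$ to the moments of $\pi/\cosh^2(2\pi x)$ is a generating-function identity: $\int x^k \,\pi/\cosh^2(2\pi x)\,dx$ has a clean closed form (related to tangent/Euler numbers), and the combinatorial side produces the same numbers via counting certain alternating-pattern pairs, exactly mirroring the Mingo--Nica argument.

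The main obstacle I anticipate is the bookkeeping in step one and two: correctly tracking the local crossing signs $\varepsilon_{ij}$ at the center of the pre-petal diagram and verifying that the resulting quadratic form in $\pi$ is \emph{literally} (not just heuristically) the signed-area statistic of a lattice walk whose increment process, under uniform $\pi$, matches the model Mingo--Nica analyze. Getting the signs and the cyclic orderings exactly right — including the distinction that each component here is a pre-petal rather than a petal diagram, so one ``big loop'' contains the others — is where errors are easy to make, and it is the linchpin that lets us import the $\cosh^{-2}$ law rather than re-derive it from scratch. Once the statistic is correctly identified, the moment computation and the final integral evaluation are essentially routine.
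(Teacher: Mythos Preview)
Your plan is essentially the paper's own: write $lk(L_{4n})$ as a signed bilinear sum over the strands of the two components, recognize it as the algebraic area of a closed lattice walk, and run a Mingo--Nica moment computation to land on $\pi/\cosh^2(2\pi x)$. Two points deserve sharpening.

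First, the walk is two-dimensional, not one-dimensional. In the paper's clean setup one records, at each height level $t=1,\dots,4n$, whether the strand at that height belongs to component one or two (this decides horizontal versus vertical step) and its direction sign $S(t)=(-1)^{\pi^{-1}(t)}$; the result is a closed walk in $\Z^2$ taking exactly $n$ steps in each of the four directions, and the discrete Green/shoelace identity gives $A(\gamma)=-\sum_{x>y}S(x)S(y)$ over pairs from the two components, which is $\pm lk$. Your ``$\sum_t S_t\delta_t$'' formulation can be made to say this, but as written it reads like a one-dimensional bridge, which would not have an area.

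Second, and more substantively: the walk you get is \emph{balanced} (exactly $n$ steps in each direction), whereas Mingo--Nica treat the \emph{unconditioned} closed walk. You cannot literally invoke their theorem; the paper explicitly declines the coupling shortcut and instead redoes the moment computation for the balanced model. The heart of that computation is a cancellation lemma (their Lemma~7) showing that, after expanding $E[\ell^k]$ over partitions $\xi,\eta\vdash[k]$ and sign assignments, the inner $\pm1$ sum kills all contributions except those from partitions with parts of size $\le 2$, which both proves $\deg_n E[\ell^k]\le k$ and yields the limiting moments $\Lambda_k$ in the Mingo--Nica form. From there the generating-function identity $\sum\Lambda_k z^k/k! = (z/2)/\sin(z/2)$ and the method of moments finish the job. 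So your step~3 is not ``routine'' bookkeeping plus citation: the adaptation \emph{is} the proof, and Lemma~7 is the missing key lemma you need to supply.
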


Theorem~\ref{linking} resolves a difficulty that was encountered in the uniform random polygon model~\cite{arsuaga2007linking}. In fact, it gives the first explicit description of the asymptotic probability distribution for any knot or link invariant. Our proof of Theorems~\ref{moments1} and~\ref{linking} is an adaptation and simplification of Mingo and Nica's study of the area enclosed by a random curve~\cite{mingo1998distribution}.

Theorem~\ref{moments2} shows that $c_2(K_{2n+1})$ typically grows as $n^2$, while Theorem~\ref{moments3} show that $v_3(K_{2n+1})$ grows as $n^3$. In combination with Theorem~\ref{linking}, these results naturally suggest the following conjecture:

\begin{conj}\label{limitdist}
Let $v_m$ be a knot invariant of order $m > 0$. Then $v_m(K_{2n+1})/n^m$ weakly converges to a limit distribution as $n \rightarrow \infty$.
\end{conj}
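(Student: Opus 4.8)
The plan is to establish, for each fixed order-$m$ invariant $v_m$, that the normalized sequence $v_m(K_{2n+1})/n^m$ is tight and that all of its moments converge to finite limits; since a probability distribution on $\R$ with all moments finite and satisfying a Carleman-type growth bound is determined by its moments, this would give weak convergence to the unique distribution with those moment limits. Concretely, I would first show the moment analogue of Theorems~\ref{moments2} and~\ref{moments3}: namely that $E[(v_m(K_{2n+1}))^k]$ is a polynomial in $n$ of degree at most $mk$ (with equality governing the even $k$ when $v_m$ is odd under mirror symmetry, as in the Remark). Granting that, $E[(v_m/n^m)^k]$ converges to the leading coefficient $\mu_k$ of that polynomial, and one checks these $\mu_k$ satisfy the Carleman condition, so the moment problem is determinate and $v_m(K_{2n+1})/n^m \Rightarrow$ the corresponding limit law.

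The heart of the argument is the polynomiality-and-degree bound for the moments. I would run the same machinery used for $c_2$ and $v_3$: write $v_m$ via a Gauss-diagram (or Polyak--Viro arrow-diagram) formula as a signed count of ordered $m$-tuples of crossings of the petal diagram whose pairing pattern and local signs match a fixed finite set of arrow diagrams. Then $(v_m(K_{2n+1}))^k$ is a sum over $k$-tuples of such $m$-tuples, i.e.\ over ordered collections of at most $mk$ crossings; each crossing in the petal diagram corresponds to an unordered pair of the $2n+1$ strands, and its sign and "above/below'' relations are determined by the relative order of the corresponding values of the random permutation $\pi\in S_{2n+1}$. Taking expectations, the contribution of each combinatorial pattern factors through the probability that a bounded number of entries of a uniform random permutation fall in a prescribed relative order together with prescribed arc-adjacency constraints among the strand indices. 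Summing over which strands play which role is a sum of such probabilities over a bounded-degree polytope's worth of index choices, and a standard argument (the one already implicit behind Theorems~\ref{moments1}--\ref{moments3}) shows such a sum is a polynomial in $n$; the degree is controlled by the number of free indices, which is at most the number of distinct strands involved, at most $mk$. The vanishing of odd moments when $v_m$ is mirror-odd follows because the mirror image reverses all crossing signs, inducing an involution on $S_{2n+1}$ under which $v_m \mapsto -v_m$.

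The main obstacle I anticipate is not the upper bound $mk$ but controlling cancellation well enough to know the limit is a genuine probability distribution rather than degenerating. Two points need care. First, tightness: the degree-$\le mk$ bound already gives $E[(v_m/n^m)^{2}] = O(1)$, hence tightness by Markov, so this is not actually hard once the moment bound is in place. Second, and more delicate, is verifying that the limiting moment sequence $\{\mu_k\}$ is itself a valid moment sequence with $\sum_k \mu_{2k}^{-1/2k} = \infty$: the polynomial-degree bound gives $\mu_k \le C^k (mk)!/(m k/ e)^{?}$-type estimates, but pinning down that the growth is no worse than that of a determinate distribution requires a cleaner combinatorial bound on the number of arrow-diagram patterns and on the per-pattern coefficients, presumably by comparison with a Gaussian-type bound $\mu_{2k} \le (Ck)^{mk}$. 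Establishing such a uniform bound — uniformly over $n$, before passing to the limit — is where the real work lies; for $m=1$ it is exactly what Theorem~\ref{linking} and the Mingo--Nica analysis accomplish, and for general $m$ one would need the analogous growth control on the moments coming from the arrow-diagram expansion. I would therefore expect the proof of the conjecture to reduce to: (i) the polynomiality/degree lemma, which is routine given the $c_2,v_3$ cases, and (ii) a Carleman-type growth estimate on the leading coefficients $\mu_k$, which is the crux.
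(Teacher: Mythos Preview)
The statement you are attempting to prove is labeled a \emph{Conjecture} in the paper, and the paper does not prove it. So there is no ``paper's own proof'' to compare against; your proposal is an attempted proof of an open problem.

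That said, your outline has a concrete gap in step (i). You assert that the degree bound $\deg_n E[v_m^k] \le mk$ follows because ``the degree is controlled by the number of free indices, which is at most the number of distinct strands involved, at most $mk$.'' This count is off by a factor of two: an order-$m$ Gauss diagram term involves $m$ crossings, hence up to $2m$ strands, so $v_m^k$ involves up to $2mk$ distinct strands. The routine argument you describe only gives degree $\le 2mk$; this is exactly what the paper obtains before the hard work begins (see the $c_2$ proof, where the naive bound is $4k$ and is then cut to $2k$). The reduction from $2mk$ to $mk$ is the content of the cancellation Lemmas~\ref{masstransfer} and~\ref{j3masstransfer}, and for $m=3$ the relevant identity (Lemma~\ref{j3masstransfer}) was verified by computer and, as the paper explicitly notes, \emph{fails} for three of the four known Gauss diagram formulas for $v_3$; only the GPV formula works. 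The paper ends by posing as an open question whether such a cancellation lemma can be found for general $m$, and links it to the (conjectural) existence of virtual-knot-invariant Gauss diagram formulas. So step (i) is not routine: it is itself open.

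Step (ii) is also not merely a matter of ``pinning down growth.'' The paper states, immediately after Corollary~\ref{formula}, that even for $c_2$ (where the moment bound \emph{is} proved) the authors ``cannot infer weak convergence of the normalized distributions'' because they lack control of the cancellations in the leading-coefficient sum. So the Carleman-type bound you describe as ``the crux'' is open already for $m=2$, not just for general $m$.
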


In Section~\ref{cassonsect} we discuss computational evidence for the existence of a limit distribution for $c_2/n^2$. It is illustrative to note the {\em extremely atypical} behavior of the above-mentioned torus knots. Whereas $c_2$ is typically quadratic, it is of order $n^4$ for $(n,n+1)$ torus knots.

\begin{rmk*}
By considering the \emph{blackboard framing} of petal diagrams we obtain a universal model of~\emph{framed knots}~\cite[p. 17]{chmutov2012introduction}. This allows us to analyze finite type invariants of random framed knots, e.g., the \emph{writhe}~\cite[p. 6]{chmutov2012introduction}. This analysis can be carried out using methods similar to those employed for knots. We will not discuss the framed case in this paper.
\end{rmk*}

\subsection*{The Probabilistic Method}

The analysis of knot invariants in this paper is part of a project to apply the probabilistic method in topology. This methodology begins by defining a probability distribution on the objects of study. Parameters and invariants of interest then become random variables on this probability space. Tools of probability theory are then applied to investigate the distribution of these random variables. 

This general approach has yielded unexpected results in graph theory and many other areas. It has often provided existence proofs of objects with unexpected properties, such as expander graphs, or graphs of arbitrarily high girth and chromatic number. In many cases, such as for lower bounds for Ramsey numbers, finding matching explicit constructions remains open. 

In view of the great success of this paradigm in discrete mathematics, it is natural to consider its application to the study of random geometric objects. In this direction, the probabilistic method has played a major role in the theory of normed spaces for some time~\cite{milman1986asymptotic}. More recently there have been interesting attempts to study random simplicial complexes~\cite{linial2006homological}, random 3-manifolds~\cite{dunfield2006finite, maher2010random, kowalskicomplexity, lubotzky2014random, lutz2008combinatorial} and more. Our focus here is to bring this approach to random knots and links, and to associated knot and link invariants.

\subsection*{Model Dependence}

The random knots and links investigated in this paper are based on the Petaluma model, which determines a knot or link from a petal diagram and a permutation. It is important to consider to what extent our results are model dependent, and to investigate what might happen if we switch to a different model.  

To test the extent of model dependency of our statistics, we ran numerical studies on the distribution of the $c_2$ and $v_3$ invariants in a second random model, the grid model, discussed in Section~\ref{cassonsect}. Our numerical experiments yield a distribution for $c_2$ in the grid model. This distribution shares many features with the distribution obtained for the Petaluma model. We also derive some statistical measures in the star model, a related model introduced in Section~\ref{cassonsect}. The similarities between these three models are discussed in Section~\ref{cassonsect}. We note for example that in both the Petaluma and grid models the Casson invariant of a random knot appears to be positive more often than negative, at roughly a $3:1$ ratio.

It remains unclear how the choice of a random model determines the statistics of a knot and link invariant, and whether universality principles apply across a wide range of models.

\subsection*{Plan}

In Sections~\ref{linksect}, ~\ref{cassonsect}, and~\ref{j3sect}, we investigate invariants of order $1$, $2$, and $3$, respectively, in the random model: the linking number in Section~\ref{linksect}, the Casson invariant $c_2$ in Section~\ref{cassonsect}, and $v_3$ in Section~\ref{j3sect}. At the end of Section~\ref{cassonsect}, we present some calculations of $c_2$ moments and numerical results, and discuss their relations to other models.

\section{The Linking Number of a Random 2-Component Link}\label{linksect}

Before describing the linking number, we provide some general background on finite type invariants, an important family of knot and link invariants that includes the linking number.

\subsection*{Finite Type Invariants}

Finite type invariants were introduced by Vassiliev~\cite{vassiliev1988cohomology} and have been the subject of intensive study. These invariants are convenient to define via their extension to the more general context of singular knots and links, as we describe below.

A \emph{singular knot} is a smooth map of the circle $S^1$ to $\R^3$ with finitely many \emph{double} points of transversal self intersection. This is considered up to isotopy that preserves the double points. The more general definition of a \emph{singular link} is similar. A double point $p$ on a singular link $L$ can be {\em resolved} in two different ways, by locally pushing one of the strands in one direction or in the other. Locally, these resolutions come in two distinct well-defined forms: 
positive~\tikz[thick,->,baseline=-2]{\draw[black](0.3,-0.1) -- (0,0.2);\draw[white,-,line width=3](0.03,-0.1) -- (0.33,0.2);\draw[black](0.03,-0.1) -- (0.33,0.2);},
and negative~\tikz[thick,->,baseline=-2]{\draw[black](0.03,-0.1) -- (0.33,0.2);\draw[white,-,line width=3](0.3,-0.1) -- (0,0.2);\draw[black](0.3,-0.1) -- (0,0.2);}.
For such a double point $p$, we denote the resulting links by $L_p^+$ and $L_p^-$, each having one less singularity than~$L$.

Every link invariant is extended to singular links via the recursion
$ v(L) = v(L_p^+) - v(L_p^-)$.
The value of $v$ on a singular link with $m$ double points is thus a signed sum of its value on $2^m$ non-singular links. A knot (or link) invariant is of \emph{finite type}, or \emph{finite order}~$m$, if it vanishes on all singular knots (or links) with $m+1$ double points.

The first non-trivial finite type knot invariant is the Casson invariant $c_2(K)$, the second coefficient of the Conway polynomial~\cite{ball1981sequence, kauffman1981conway}. It is an invariant of order~2. See Section~\ref{cassonsect} for a constructive definition. The next independent invariant, of order~3, is determined by the third coefficient of the modified Jones polynomial, to be discussed in Section~\ref{j3sect}. In order~4 there are already three new invariants.
Vassiliev's conjecture states that finite type invariants distinguish knots. For more details see~\cite{chmutov2012introduction}.

\subsection*{The Linking Number}

A well-known invariant of two-component oriented links, the linking number $lk(L)$, is an invariant of order~$1$. The linking number counts the number of times that one component winds around the other, a number that is symmetric in the two components. It is defined here via link diagrams.
Recall that a \emph{link diagram} is a projection of the link to a plane,
that is one-to-one except for a finite number of double-points where two strands cross each other transversely.
At each such \emph{crossing point} it records which strand is upper and which is lower in the original link.
A crossing is \emph{positive} or \emph{negative} according to the orientation of the upper and lower strands as an ordered basis of the plane.

The linking number of a two-component link $L$ can be computed from its link diagram as follows.
Pick arbitrarily one of the components, and consider the crossings where it passes over the other one.
The linking number is the sum of the signs of these crossings.
Schematically, if the two components are colored gray and black,
$$ lk(L) \;\;=\;\; \#\;
\tikz[ultra thick,->,baseline=-4]{\draw[black](0.6,-0.3) -- (0,0.3);\draw[white,line width=5](0.05,-0.3) -- (0.65,0.3);\draw[lightgray](0.05,-0.3) -- (0.65,0.3);}
\;-\;\#\;
\tikz[ultra thick,->,baseline=-4]{\draw[black](0.05,-0.3) -- (0.65,0.3);\draw[white,line width=5](0.6,-0.3) -- (0,0.3);\draw[lightgray](0.6,-0.3) -- (0,0.3);}\;\;\;\;. $$
\begin{example}
$lk(\;
\tikz[ultra thick,baseline=-4]{
\draw[-,lightgray](0.6,0) .. controls +(-90:0.4) and +(-90:0.4) .. (0,0);
\draw[-,white,line width=4](1,0) .. controls +(-90:0.4) and +(-90:0.4) .. (0.4,0);
\draw[<-,black](1,0) .. controls +(-90:0.4) and +(-90:0.4) .. (0.4,0);
\draw[-,black](0.4,0) .. controls +(90:0.4) and +(90:0.4) .. (1,0);
\draw[-,white,line width=4](0,0) .. controls +(90:0.4) and +(90:0.4) .. (0.6,0);
\draw[<-,lightgray](0,0) .. controls +(90:0.4) and +(90:0.4) .. (0.6,0);}
\;)=1$, as there is one relevant crossing, and it is positive.
\end{example}

Consider a random link $L_{4n}$, obtained from a random permutation $\pi \in S_{4n}$ via a two-component petal diagram, as in Figure~\ref{fig_petal2}. The $4n$ strands pass at the center at $4n$ different heights, where strand $i$ passes above strand $j$ if $\pi(i)>\pi(j)$.

The strands are numbered according to the following rule.
Choose two base points on the large external loop of each component, say, at the two uppermost points of the diagram.
Start travelling from the upper right base point along the entire component, then take a similar trip throughout the other component. 
The strands are numbered from $1$ to $4n$ according to the order they are visited in this scan.
The heights occurring in the two components are thus given by
$$ X = \{\pi(i):1 \leq i \leq 2n\}, \;\;\;\;\;\;\;\; Y = \{\pi(j):2n+1 \leq j \leq 4n\}.$$

Note that the two components go alternatingly back and forth along two orthogonal axes.
We thus define a function that records the direction $S$ of the strand at each height $x$,
$$S(x) \;=\; (-1)^{\pi^{-1}(x)} \;\;\;\;\;\;\;\;\;\;\;\; 1 \leq x \leq 4n\;.$$
For example, if the strand at height $x \in X$ is directed SW to NE $(\nearrow)$, then $S(x)=1$.
Likewise, $S(y)=1$ for a strand at height $y \in Y$ with direction SE to NW $(\nwarrow)$.
In our model, $L_{4n}$, $S$, $X$, and $Y$ are all random variables, i.e. functions of $\pi$.

A slight perturbation of $L_{4n}$ near the center yields a link diagram with $\binom{4n}{2}$ simple crossings, of which $(2n)^2$ involve both components. If $x>y$
for strands $x \in X, y\in Y$, the sign of the corresponding crossing is $S(x)S(y)$.
We thus obtain the following formula for the linking number
\begin{equation}
lk(L_{4n}) \;=\; \ell(S,X,Y) \;:=\; \sum\limits_{x \in X}\;\sum\limits_{y \in Y} \;\begin{cases} S(x)S(y) & x>y \\ 0 & else \end{cases} 
\label{lkL4n}\tag{$\star$}
\end{equation}
This expression involves only the variables $S$, $X$, and $Y$ with no direct reference to $\pi$. Indeed the mapping $\pi\to (S,X,Y)$ is many-to-one. The linking number as given by this formula depends only on the partition of $[4n] = \{1,\dots,4n\}$ into the four parts of size $n$ each,
$$ X^+ = X \cap S^{-1}(1) \;\;\;\;\; X^- = X \cap S^{-1}(-1) \;\;\;\;\; Y^+ = Y \cap S^{-1}(1) \;\;\;\;\; Y^- = Y \cap S^{-1}(-1) \;.$$
The $(4n)!$ possible choices of $\pi$ split into $\binom{4n}{n,n,n,n}$ equal-sized subclasses, where the permutations in the same class yield the same linking number.

\subsection*{The Random Area Problem}

The sum in~\eqref{lkL4n} has an interesting interpretation.
We associate with the link $L_{4n}$ a random walk $\gamma=\gamma_{4n}:\{0,\dots,4n\} \rightarrow \Z^2$ on the grid.
The walk starts at the origin $\gamma(0) = (0,0)$, and makes $4n$ steps related to the $4n$ strands at the center of $L_{4n}$'s two-component petal diagram. Each step is determined by the direction of a strand in the diagram -- up, down, right, or left, where the strands are considered by their height at the crossing, from below to above.
$$ \gamma(t) = \gamma(t-1) + \begin{cases} (S(t),0) & \text{if } t \in X \\ (0,S(t)) & \text{if } t \in Y \end{cases} \;.$$
This walk is {\em balanced}. It takes $n$ steps in each of the four directions, and returns to the origin at $t=4n$. Note that $L_{4n}$ induces a uniform probability distribution on the $\binom{4n}{n,n,n,n}$ different walks. Connecting every two consecutive lattice points that $\gamma$ visits determines a polygonal path in the~$xy$-plane.

The \emph{algebraic area} $A(\gamma)$ enclosed by a closed grid walk, is the sum of $\gamma$'s winding numbers around all grid squares.
For example, if $\gamma$ is self-avoiding and oriented counterclockwise, it coincides with the regular notion of area enclosed by a curve.
In general, it can be computed by a ``discrete Green's theorem''.
A horizontal step at time $t$ contributes $\pm 1$ to each square in the column between the horizontal edge $(\gamma(t-1),\gamma(t))$ and the $x$-axis.
This yields
\begin{align*}
A(\gamma) \;=\; \oint_{\gamma} (-y)dx \;&=\; \sum\limits_{t=1}^{4n} (-\gamma_y(t)) \cdot (\gamma_x(t)-\gamma_x(t-1)) \\
\;&=\; -\sum\limits_{x \in X} \;\sum\limits_{\substack{y \in Y , x>y}} \;S(y) \; S(x) \;=\; -\ell(S,X,Y) \;.
\end{align*}
The distribution of the algebraic area is clearly symmetric around $0$, as seen by reflecting $\gamma$ around the $x$-axis.
Thus the minus sign can be ignored and the equality of distributions $lk(L_{4n})$ and $A(\gamma_{4n})$ follows for every $n$.

An imbalanced case of this problem appears in the literature.
Let $\Gamma_N$ be a uniformly chosen closed $N$-step random walk in $\Z^2$
with $l,r,u,d$ steps to the left, right, up and down, respectively. Clearly, $N=l+r+u+d$ is even, since $l=r$ and $u=d$. In the imbalanced case we do not demand that all four are equal.
The asymptotic distribution of $A(\Gamma_N)$ is known to be
$$ P\left[\alpha < \frac{A(\Gamma_N)}{N} < \beta \right] \;\xrightarrow{\;N\rightarrow\infty\;}\;
\int^{\beta}_{\alpha} \frac{\pi}{\cosh^2(2 \pi x)}dx \;=\; \frac{\tanh(2 \pi \beta) - \tanh(2 \pi \alpha)}{2} $$

This beautiful formula was established by three different approaches: by comparison to Brownian motion in the plane~\cite{levy1951wiener, khandekar1988distribution, duplantier1989areas}, through the Harper Equation~\cite{harper1955general, bellissard1997exact}, and via the method of moments~\cite{mingo1998distribution}.

\begin{figure}[htb]
\centering
\includegraphics[width=0.8\textwidth]{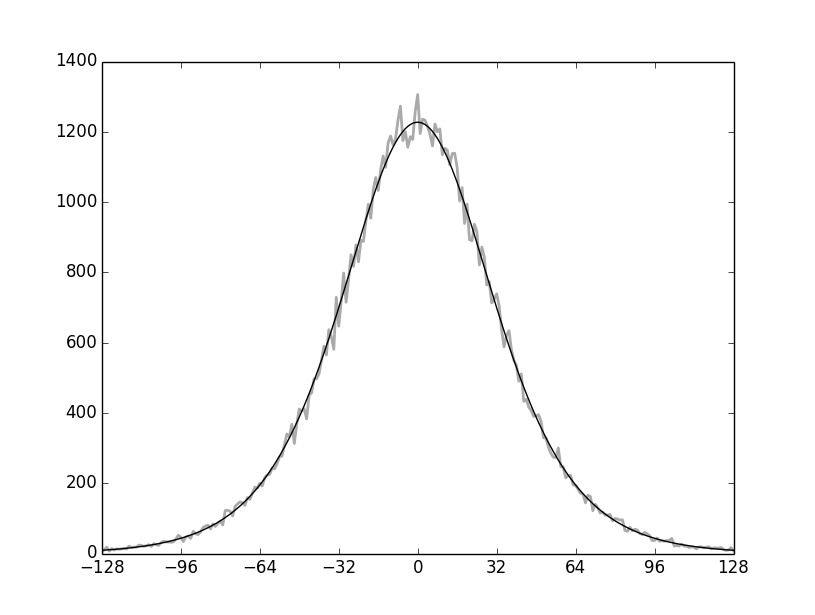}
\caption{Distribution of $lk(L_{4n})$ in $100000$ randomly sampled links with $n=64$,
compared to the predicted $1/\cosh^2$ limit distribution.}
\label{fig:L256}
\end{figure}

An imbalanced random $4n$-step closed walk, with high probability, takes $n\pm o(n)$ steps in each direction. Therefore it seems plausible that $A(\Gamma_{4n})/4n$ and $A(\gamma_{4n})/4n$ weakly converge to the same limit as $n \rightarrow \infty$, as stated in Theorem~\ref{linking}. In our proof, however, we do not take this route, but rather adopt the approach of Mingo and Nica~\cite{mingo1998distribution} and prove this from scratch. Some of our new ideas will be used later on when we investigate higher-order invariants.

An interesting consequence of Theorem~\ref{linking} is that a random link is almost surely non-trivial, since the probability that the linking number vanishes tends to zero.
In fact, numerical simulations (Figure~\ref{fig:L256}) suggest even a {\em local} limit distribution law for $lk(L_{4n})$.

We note also that Theorem~\ref{linking} can be extended to the case of imbalanced random links, with $2p$ and $2q$ loops in the two-component petal diagram. Replacing $n$ by $\sqrt{pq}$, one obtains the same limit distribution for $p,q \to \infty$. The case of $\Gamma_{N}$ then corresponds to the case where $p$ is also random with binomial distribution $B(N/2,1/2)$ and $p+q=N/2$.

As observed in~\cite{mashkevich2009area} and~\cite{mohammad2010enumeration}, for given $n$, the distribution of $L_{4n}$ can be computed in polynomial time.
To this end we define the functional $A(\gamma) = -\int_{\gamma} ydx$ on every planar walk. For $\gamma$ closed this is the signed area. Let $Z_{l,r,u,d}(A)$ be the number of walks $\gamma$ with $l,r,u,d$ steps in the four directions, and $A(\gamma)=A$.
Clearly $Z_{0000}(0) = 1$ and $Z_{0000}(A) = 0$ for $A \ne 0$.
The recurrence relation is
$$ Z_{l,r,u,d}(A) \;=\; Z_{l-1,r,u,d}(A+u-d)+Z_{l,r-1,u,d}(A-u+d)+Z_{l,r,u-1,d}(A)+Z_{l,r,u,d-1}(A) $$
In particular, we can compute $P[lk(L_{4n}) = k] \;=\; Z_{nnnn}(k) / \binom{4n}{n,n,n,n}$.

\subsection*{Proof of Theorems~\ref{moments1} and~\ref{linking}}

Recall from~\eqref{lkL4n} that
$$ lk(L_{4n}) \;=\; \ell(S,X,Y) \;=\; \sum\limits_{\substack{x \in X,y \in Y \\ x>y}} S(x)S(y) $$
where $x\in X$ and $y \in Y$ and $S$, $X$ and $Y$ are as above.
The main challenge is the computation of all moments of the random variable $\ell$. All odd moments of $\ell(S,X,Y)$ vanish due to the symmetry of $\ell$
with respect to flipping $S(x)$ for all $x \in X$. We turn to evaluate $E(\ell^k)$ for $k$ even.

The coordinate-wise order among vectors $\textbf{x},\textbf{y} \in \Z^k$ is denoted $\textbf{x} > \textbf{y}$, i.e., $x_i > y_i$ for all $1 \leq i \leq k$.
For fixed $S$, $X$, and $Y$, the $k$th power of the linking number is given by
\begin{align*}
\ell(S,X,Y)^k \;&=\; \sum\limits_{x_1 > y_1} S(x_1)S(y_1) \sum\limits_{x_2 > y_2} S(x_2)S(y_2) \cdots \sum\limits_{x_k > y_k} S(x_k)S(y_k) \\
\;&=\; \sum\limits_{\textbf{x} \in X^k} \; \sum\limits_{\textbf{y} \in Y^k} \;\; \mathcal{I}\left[\textbf{x} > \textbf{y}\right] \; \prod\limits_{i=1}^{k} S(x_i)S(y_i)
\end{align*}
where $\mathcal{I}\left[condition\right]:=1$ if the condition holds, and else $0$.

We split the terms in this sum according to collisions in $\textbf{x}$ and in $\textbf{y}$, i.e. indices $i \neq j$ for which $x_i=x_j$ or $y_i=y_j$.
Every $\textbf{x} \in X^k$ induces a partition of $[k]$ denoted $p_{\textbf{x}}$. Each part has the form $\{i : x_i = t \}$ for some $t \in X$ assuming this set is nonempty. So we first sum over pairs of partitions, $\xi \vdash [k]$ and $\eta \vdash [k]$,
and then over vectors $\textbf{x}$ and $\textbf{y}$ such that $p_{\textbf{x}} = \xi$ and $p_{\textbf{y}} = \eta$.

Let $\xi = \{\xi_1, \xi_2 ,\dots\} = p_{\textbf{x}}$ for some $\textbf{x} \in X^k$. Recall that as $i\in \xi_j$ varies, all $x_i$ are equal. Therefore $S$ induces a sign on each part of $p_{\textbf{x}}$ via $s_{\textbf{x}}(\xi_j) = S(x_i)\in\{-1,1\}$. We refine the outer summation by fixing sign functions, $\varepsilon:\xi \rightarrow \pm 1$ and $\delta:\eta \rightarrow \pm 1$, and summing separately over vectors $\textbf{x}$ and $\textbf{y}$ with $s_{\textbf{x}} = \varepsilon$ and $s_{\textbf{y}} = \delta$.

Let $\textbf{x} \in X^k$ have $p_{\textbf{x}} = \xi$ and $s_{\textbf{x}} = \varepsilon$. The term $\prod_i S(x_i)$ that $\textbf{x}$ contributes to the sum is
expressible in terms of $\xi$ and $\varepsilon$ as
$$ s(\xi,\varepsilon) \;:=\; \prod\limits_{\xi_j \in \xi} \varepsilon(\xi_j)^{|\xi_j|} \;\;.$$
We can now rewrite
$$ \ell(S,X,Y)^k \;=\; \sum\limits_{\xi\; \vdash [k]} \;\; \sum\limits_{\eta\; \vdash [k]} \;
\; \sum\limits_{\varepsilon:\xi \rightarrow \pm 1} \; \sum\limits_{\delta:\eta \rightarrow \pm 1}
\#\left\{\textbf{x} \in X^k , \textbf{y} \in Y^k \;\left|\;
\substack{p_{\textbf{x}}=\xi,\;p_{\textbf{y}}=\eta, \\ s_{\textbf{x}}=\varepsilon,\;s_{\textbf{y}}=\delta,}\right. \textbf{x} > \textbf{y} \right\}
\cdot s(\xi,\varepsilon) s(\eta,\delta) $$

Given $\xi,\eta,\varepsilon,\delta$, how many pairs of vectors $\textbf{x},\textbf{y}$ satisfy the conditions in the curly brackets?
Recall that the range $\{1,\dots,4n\}$ consists of four sets of size $n$:
$$ X^+ := X \cap S^{-1}(1) \;\;\;\;\; X^- := X \cap S^{-1}(-1) \;\;\;\;\; Y^+ := Y \cap S^{-1}(1) \;\;\;\;\; Y^- := Y \cap S^{-1}(-1) $$
The number of parts in a partition $\xi$ is denoted $|\xi|$.
Note that there are $|\xi|+|\eta|$ distinct elements that appear in $\textbf{x}$ and $\textbf{y}$.
The number of such elements from each of the above four sets is respectively
$$ x^+ \;:=\; |\varepsilon^{-1}(1)| \;\;\;\;\; x^- \;:=\; |\varepsilon^{-1}(-1)| \;\;\;\;\; y^+ \;:=\; |\delta^{-1}(1)| \;\;\;\;\; y^- \;:=\; |\delta^{-1}(-1)| $$
so that $x^++x^-=|\xi|$ and $y^++y^-=|\eta|$.
One can choose $x^+$ distinct values from $X^+$ in $(n)_{x+}$ ways, where $(n)_m = n(n-1)(n-2)\cdots(n-m+1)$.
There are thus $(n)_{x+}(n)_{x-}(n)_{y+}(n)_{y-}$ pairs of vectors $\textbf{x}$ and $\textbf{y}$
compatible with the partitions $\xi$ and $\eta$, and with the sign functions $\varepsilon$ and $\delta$.

We now turn to account for the condition $\textbf{x} > \textbf{y}$. To this end
it is convenient to think of $\textbf{x}$ and $\textbf{y}$ as uniformly picked at random from all the above pairs.
The desired number is then $(n)_{x+}(n)_{x-}(n)_{y+}(n)_{y-}$ times the probability of the event $\textbf{x} > \textbf{y}$.
This probability is denoted $P_{\xi\eta\varepsilon\delta}\left[\textbf{x} > \textbf{y}\;|\;S,X,Y\right]$, since here $S$, $X$ and $Y$ are fixed, and $\textbf{x},\textbf{y}$ that are compatible with $\left(\xi,\eta,\varepsilon,\delta\right)$ are sampled at random.
Plugging it into the sum, we obtain
$$ \ell(S,X,Y)^k \;=\; \sum\limits_{\xi,\eta\; \vdash [k]} \; \; \sum\limits_{\varepsilon:\xi \rightarrow \pm 1} \; \sum\limits_{\delta:\eta \rightarrow \pm 1}
P_{\xi\eta\varepsilon\delta}\left[\textbf{x} > \textbf{y}\;|\;S,X,Y\right]\cdot(n)_{x+}(n)_{x-}(n)_{y+}(n)_{y-} \cdot s(\xi,\varepsilon) \; s(\eta,\delta).$$

To compute the $k$th moment of $\ell$, we average this over all $S$, $X$ and $Y$.
Note that $S$, $X$ and $Y$ appear only in the factor $P_{\xi\eta\varepsilon\delta}\left[\textbf{x} > \textbf{y}\;|\;S,X,Y\right]$.
Therefore, by linearity of the expectation, it is sufficient to compute it for each such factor.

A~priori, this would require a summation of $\binom{4n}{n,n,n,n}$ different probabilities.
However, we can view $\textbf{x}$ and $\textbf{y}$ as random variables in a combined probability space,
where one first uniformly picks $S$, $X$, and $Y$, and then $\textbf{x} \in X^k$ and $\textbf{y} \in Y^k$ that are compatible with $\left(\xi,\eta,\varepsilon,\delta\right)$.
By the law of total probability,
$$ E_{S,X,Y} \left[P_{\xi\eta\varepsilon\delta}\left[\textbf{x} > \textbf{y}\;|\;S,X,Y\right]\right] \;=\;
P_{\xi\eta\varepsilon\delta}\left[\textbf{x} > \textbf{y} \right] $$
where $P_{\xi\eta\varepsilon\delta}\left[\textbf{x} > \textbf{y} \right]$ is the probability in the combined probability space.

Note that the event $(\textbf{x} > \textbf{y}) = \bigcap_{i=1}^k \;(x_i > y_i)$ only depends on the order relation among the $|\xi|+|\eta|$ values in the range $\{1,\dots,4n\}$,
that occur as entries in $\textbf{x}$ and $\textbf{y}$.
These can be encoded by a one-to-one function
$$ \sigma \;:\; \xi \sqcup \eta \;\rightarrow\; \{1,\dots,|\xi|+|\eta|\} $$
such that the correspondence that sends $x_i$ to $\sigma(\xi_j)$ where $i \in \xi_j$, and $y_i$ to $\sigma(\eta_j)$ where $i \in \eta_j$, is order-preserving.
For example, the condition $x_1 > y_1$ takes the form $\sigma(\xi_i) > \sigma(\eta_j)$ where $i,j$ are such that $1 \in \xi_i \cap \eta_j$.

Since we are sampling uniformly, each of the $(|\xi|+|\eta|)!$ choices for $\sigma$ is equally likely in the combined probability space,
and the probability $P_{\xi\eta\varepsilon\delta}\left[\textbf{x} > \textbf{y} \right]$ is
$$ P(\xi,\eta) \;:=\; \frac{\# \left\{\sigma:(\xi \sqcup \eta) \leftrightarrow \{1,\dots,|\xi|+|\eta|\}
\;\Big|\; \text{If}\;\; \xi_i \cap \eta_j \neq \varnothing \;\;\text{then}\;\; \sigma(\xi_i) > \sigma(\eta_j) \right\}}{(|\xi|+|\eta|)!} $$
In conclusion, the $k$th moment of the linking number is
$$ E\left[lk(L_{4n})^k\right] \;=\; \sum\limits_{\xi,\eta\; \vdash [k]} \; P(\xi,\eta)
\sum\limits_{\varepsilon:\xi \rightarrow \pm 1} (n)_{x+}(n)_{x-} \cdot s(\xi,\varepsilon)
\sum\limits_{\delta:\eta \rightarrow \pm 1} (n)_{y+}(n)_{y-} \cdot s(\eta,\delta) $$
Our calculations thus far show that the $k$th moment is a polynomial in $n$ of degree at most $2k$, since $\deg_n [(n)_{x+}(n)_{x-}] = |\xi| \leq k$ and $\deg_n [(n)_{y+}(n)_{y-}] = |\eta| \leq k$. The following lemma reduces the degree down to $k$.

\begin{lemma}\label{lnlemma}
Let $\xi \; \vdash [k]$ where $k$ is even,
and for $\varepsilon:\xi \rightarrow \pm 1$ denote $x^{\pm} = |\varepsilon^{-1}(\pm 1)|$ and $s(\xi,\varepsilon) = \prod\limits_{\xi_j \in \xi} \varepsilon(\xi_j)^{|\xi_j|}$.
Then
$$ \left(\frac{-1}{2n}\right)^{k/2} \sum\limits_{\varepsilon:\xi \rightarrow \pm 1} (n)_{x+}(n)_{x-} \cdot s(\xi,\varepsilon)
\;\;\xrightarrow{n\rightarrow\infty}\;\; \begin{cases} 0 & \max_j|\xi_j| \geq 3 \\
(-1)^{|\xi|}(u(\xi)-1)!! & \text{else} \end{cases}$$
where $u(\xi) := \#\{j : |\xi_j|=1\}$ and as usual $(2r-1)!! = (2r-1)(2r-3)\cdots 1 = (2r)!/2^rr!$.
\end{lemma}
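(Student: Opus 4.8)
The crucial first observation is that $s(\xi,\varepsilon)=\prod_{\xi_j\in\xi}\varepsilon(\xi_j)^{|\xi_j|}$ depends only on the signs assigned to the parts of \emph{odd} size, because $\varepsilon(\xi_j)^{|\xi_j|}=1$ when $|\xi_j|$ is even. Write $o$ for the number of odd parts of $\xi$ and $e$ for the number of even parts, so that $m:=|\xi|=o+e$; since $k=\sum_j|\xi_j|$ is even and the even parts contribute an even amount, $o$ is even. As $(n)_{x^+}(n)_{x^-}$ depends on $\varepsilon$ only through $x^+$, I would group the $2^m$ sign functions by how many odd parts and how many even parts get the sign $+1$, and use the binomial theorem — together with the parity of $o$, which turns $(-1)^{o-a}$ into $(-1)^a$ — to obtain
$$\sum_{\varepsilon:\xi\to\pm1}(n)_{x^+}(n)_{x^-}\,s(\xi,\varepsilon)\;=\;\sum_{c=0}^{m}f_c\,(n)_c(n)_{m-c},\qquad F(z):=\sum_{c\ge0}f_cz^c=(1-z)^o(1+z)^e.$$
Everything is thereby reduced to determining the degree and leading coefficient of the polynomial $n\mapsto\sum_c f_c(n)_c(n)_{m-c}$.

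The plan is to read both off from the behaviour of $F$ at $z=1$. For any weights one has $\sum_c f_c(c)_r=F^{(r)}(1)$, and as $F$ has a zero of order exactly $o$ at $z=1$ this gives $\sum_c f_c(c)_r=0$ for $r<o$ and $\sum_c f_c(c)_o=F^{(o)}(1)=(-1)^oo!\,2^e$. I would then expand $(n)_c(n)_{m-c}$ using the identity
$$(n)_c(n)_{m-c}\;=\;\sum_{j\ge0}\binom cj\binom{m-c}{j}j!\,(n)_{m-j},$$
which follows from a short inclusion--exclusion over the common values of the two tuples counted by $(n)_c(n)_{m-c}$. Since $\binom cj\binom{m-c}{j}$ is a polynomial in $c$ of degree $2j$, summing against $f_c$ annihilates every term with $2j<o$, so
$$\sum_c f_c(n)_c(n)_{m-c}\;=\;\sum_{j\ge o/2}j!\,(n)_{m-j}\sum_c f_c\binom cj\binom{m-c}{j}$$
is a polynomial in $n$ of degree at most $m-o/2=e+o/2$.

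To finish I would compare $e+o/2$ with $k/2$. Odd parts have size $\ge1$ and even parts size $\ge2$, so $k\ge o+2e$, and the deficit $k-o-2e$ is a sum of nonnegative even integers; thus $e+o/2\le k/2$, with equality exactly when every part has size $1$ or $2$. If $\max_j|\xi_j|\ge3$ the deficit is $\ge2$, so $\sum_c f_c(n)_c(n)_{m-c}=O(n^{k/2-1})$ and the normalized quantity tends to $0$. If $\max_j|\xi_j|\le2$, then $o=u(\xi)$, $e=(k-u)/2$, $m=|\xi|=(k+u)/2$, and $e+o/2=k/2$; the coefficient of $n^{k/2}$ comes only from the $j=o/2$ term, in which $(n)_{m-o/2}$ contributes leading coefficient $1$ and the top coefficient in $c$ of $\binom{c}{o/2}\binom{m-c}{o/2}$ is $(-1)^{o/2}/((o/2)!)^2$, whence $\sum_c f_c\binom{c}{o/2}\binom{m-c}{o/2}=(-1)^{o/2}F^{(o)}(1)/((o/2)!)^2$. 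Collecting everything, the limit is
$$\Bigl(\tfrac{-1}{2}\Bigr)^{k/2}\frac{(-1)^{o/2}}{(o/2)!}\,F^{(o)}(1)\;=\;(-1)^{k/2}2^{-k/2}\,\frac{(-1)^{o/2}o!\,2^e}{(o/2)!},$$
and substituting $o=u(\xi)$, $e=(k-u)/2$, using $(-1)^o=1$ and $(2r-1)!!=(2r)!/(2^rr!)$, this collapses to $(-1)^{(k+u)/2}u!/\bigl(2^{u/2}(u/2)!\bigr)=(-1)^{|\xi|}(u(\xi)-1)!!$. The computation is uniform in $u\ge0$; for $u=0$ it is just $\sum_c f_c(n)_c(n)_{m-c}=(2n)_m$ by Vandermonde.

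The step I expect to be the main obstacle is this last bookkeeping: pinning down the leading coefficient in $c$ of $\binom cj\binom{m-c}{j}$ (equivalently, the top terms of the relevant Stirling-number-of-the-first-kind polynomials) and carrying all the signs and the parity of $o$ correctly through the final simplification. The conceptual core, by contrast, is clean: the order-$o$ zero of $(1-z)^o(1+z)^e$ at $z=1$ is exactly what drops the degree from the naive $m$ down to $e+o/2\le k/2$, which both justifies the normalization by $n^{k/2}$ and singles out the partitions with all parts of size $\le2$ as the ones with a nonzero limit.
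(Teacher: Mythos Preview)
Your argument is correct. Both you and the paper start from the same convolution identity
\[
(n)_a(n)_b \;=\; \sum_{r\ge 0}\binom{a}{r}\binom{b}{r}\,r!\,(n)_{a+b-r},
\]
but the cancellation is detected differently. The paper keeps the sum over all $\varepsilon\in\{\pm1\}^{|\xi|}$ and reads it as a Fourier inner product on $\{\pm1\}^{|\xi|}$: writing $x^\pm=(|\xi|\pm\sum_j\varepsilon(\xi_j))/2$, the factor $(x^+)_r(x^-)_r$ lies in the span of characters $\chi(I)$ with $|I|\le 2r$, while $s(\xi,\varepsilon)=\chi(J)$ with $J$ the set of odd-size parts; orthogonality then kills $r<|J|/2$ and isolates the $\chi(J)$-coefficient when $r=|J|/2$. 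You instead collapse the $\varepsilon$-sum first to a one-variable sum $\sum_c f_c(n)_c(n)_{m-c}$ with $F(z)=\sum_c f_c z^c=(1-z)^o(1+z)^e$, and use that the order-$o$ zero of $F$ at $z=1$ annihilates $\sum_c f_c P(c)$ for every polynomial $P$ of degree $<o$. The two mechanisms are equivalent --- the Fourier orthogonality in the paper is exactly the statement that the symmetric function $(x^+)_r(x^-)_r$ has no degree-$>2r$ component in the $\varepsilon_j$'s --- but your packaging is arguably more elementary, avoiding the character basis entirely in favor of a generating-function argument. The paper's version, on the other hand, is phrased so as to be reused verbatim later (the same $\langle\,\cdot\,,\chi(J)\rangle$ device recurs in the $c_2$ and $v_3$ proofs), which your formulation does not immediately provide.
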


\begin{proof}
We use two standard combinatorial identities about sums.
\begin{enumerate}
\item
For non-negative integers $n,a,b$,
$$(n)_a(n)_b = \sum_{r=0}^{a+b} \frac{(a)_r(b)_r}{r!} \;(n)_{a+b-r} \;\;\;.$$
Both sides count pairs of words in the alphabet $[n]$ the first having $a$ distinct letters and the second having $b$ distinct letters. The right hand side splits the summation according to the number $r$ of letters that appear in both words.
Since $(a)_r=0$ for $r>a$, the sum is in fact only up to $r=\min(a,b)$.
\item
For $I\subseteq \{1,\dots,t\}$ we define $\chi(I):\{\pm 1\}^t \to \R$ by $\chi(I)(\varepsilon_1,\dots,\varepsilon_t) = \prod_{i \in I}\varepsilon_i$.
The functions $\left\{\left.\chi(I)\;\right|\; I \subseteq \{1,\dots,t\} \right\}$ constitute an orthogonal basis of the linear space $\mathcal{W}_t$ of functions $\{\pm 1\}^t \rightarrow \mathbb{R}$. Namely,
$$ \left\langle \chi(I),\chi(J)\right\rangle \;=\; \sum\limits_{{\varepsilon} \in \{\pm 1\}^t} \chi(I)({\varepsilon})\chi(J)({\varepsilon}) \;=\; 2^t \delta_{IJ} \;\;.$$
This is, in fact, the Fourier basis of $\mathcal{W}_t$ with respect to the group $\Z_2^t$. To verify these relations note that $\sum_{\varepsilon}\chi(I)(\varepsilon) = 0$ for $I \neq \varnothing$, and $\chi(I)\chi(J) = \chi(I \triangle J)$.
\end{enumerate}
We now turn to prove the statement of the lemma. By the above identity,
$$ \sum\limits_{\varepsilon:\xi\to\pm 1} (n)_{x+}(n)_{x-} \cdot s(\xi,\varepsilon)
\;=\; \sum_{r=0}^{|\xi|} \;\frac{(n)_{|\xi|-r}}{r!} \sum\limits_{\varepsilon:\xi\to\pm 1} \left(x^+\right)_r\left(x^-\right)_r \cdot s(\xi,\varepsilon).$$
Let $t = |\xi|$, and $J = \{j : |\xi_j| \text{ is odd}\} \subseteq \{1,\dots,t\}$.
Writing $x^{\pm}$ in terms of $\varepsilon$, the above expression takes the form
$$\sum_{r=0}^{t} \;\frac{(n)_{t-r}}{r!} \; \sum\limits_{\varepsilon:\xi \rightarrow \pm 1}
\left(\frac{t}{2} + \frac12\sum_{j=1}^t\varepsilon(\xi_j)\right)_{\mathlarger{r}}\left(\frac{t}{2} - \frac12\sum_{j=1}^t\varepsilon(\xi_j)\right)_{\mathlarger{r}}
\cdot \;\prod\limits_{j \in J} \varepsilon(\xi_j) $$
The sum over $\varepsilon$ can be viewed as the inner product $\left\langle \cdot, \chi(J) \right\rangle$ in $\mathcal{W}_t$. Expanding the product of the $2r$ factors in $\left(x^+\right)_r\left(x^-\right)_r$, we obtain a linear combination of $\{\chi(I)\}_{|I| \leq 2r}$.

Note that $|J|$ is always even since $k$ is. Note also that $|J| \geq 2t-k$ with equality if and only if all parts of $\xi$ are of size $1$ or $2$. The analysis splits into several cases:
\begin{itemize}
\item
If $r<|J|/2$ then there are no products of $|J|$ different $\varepsilon(\xi_j)$'s in the expansion,
so that it is orthogonal to $\chi(J)$, and such $r$ can be ignored.
\item
If {\bf $r>|J|/2$} then $k/2>t-r$ and so $(n)_{t-r} = o(n^{k/2})$. Such $r$
also contribute zero to the limit in the lemma.
\item
If $r=|J|/2$ and $\max_j|\xi_j| \geq 3$, so that $|J| > 2t-k$ strictly, then the term is similarly $o(n^{k/2})$.
This implies the lemma in the first case, where the limit is $0$.
\item
In the remaining case $|\xi_j| \leq 2$ for all $j$, and $r=|J|/2=t-k/2$. By orthogonality we only have to count the occurrences of $\chi(J)$ in the expansion on the left.
Being a product of $2r$ distinct $\varepsilon(\xi_j)$'s, it appears there exactly $(2r)!$ times, with a coefficient of $(-1)^r/2^{2r}$.
Since $\left\langle \chi(J),\chi(J)\right\rangle=2^t$ this term contributes
$$ (n)_{t-r}/r! \cdot (2r)! \cdot (-1)^r/2^{2r} \cdot 2^t \;=\; (-1)^{t-k/2} \cdot (2t-k-1)!! \cdot 2^{k/2} \cdot \left(n^{k/2} + O(n^{k/2-1})\right) $$
which proves the second case of the lemma. \qedhere
\end{itemize}
\end{proof}

Theorem~\ref{moments1} follows immediately from Lemma~\ref{lnlemma}. We use the lemma to establish also the limit distribution, and consider
$$ \Lambda_k \;:=\; \lim_{n \rightarrow \infty} E\left[\left(\frac{lk(L_{4n})}{2n}\right)^k\right] \;=\;
\sum\limits_{\substack{\xi,\eta \;\vdash [k] \\ |\xi_j|,|\eta_i| \leq 2}} \; P(\xi,\eta) \cdot (-1)^{|\xi|+|\eta|} (u(\xi)-1)!! (u(\eta)-1)!! $$

Thus $\Gamma_{4n}$ and $\gamma_{4n}$ have the same limit behavior. Indeed the limit of the moment $\Lambda_k$ is precisely as in Proposition 4.3 on page 74 of~\cite{mingo1998distribution}, and the rest of the proof is a modification of their argument in pages 75--85. This involves further simplification of $\Lambda_k$, finding the moment generating function, and application of the method of moments to obtain the limit distribution. It is still useful to review these steps here, since similar arguments are relevant in our analysis of other  invariants below.

For two partitions $\xi, \xi' \;\vdash [k]$, we write $\xi \prec \xi'$ if $\xi$ is a \emph{refinement} of $\xi'$, i.e., every part of $\xi$ is contained in a part of $\xi'$.
If $\xi$ is a partition of $[k]$ with parts of size $1$ or $2$,
then there are $(u(\xi)-1)!!$ partitions $\xi'$ all of whose parts have size $2$ with $\xi \prec \xi'$. They correspond to all perfect matchings between the singletons in $\xi$.
Define
$$ \psi(\xi',\eta') \;:=\; \sum\limits_{\xi \prec \xi', \eta \prec \eta'} (-1)^{|\xi|+|\eta|} P(\xi,\eta) $$
and rewrite the sum as
$$ \Lambda_k \;=\;
\sum\limits_{\substack{\xi,\eta \;\vdash [k] \\ |\xi_j|,|\eta_i| \leq 2}} \; P(\xi,\eta) \cdot (-1)^{|\xi|+|\eta|}
\cdot \#\left\{\xi',\eta' \;\left|\; \substack{ \xi \prec \xi', \eta \prec \eta' \\ |\xi'_j|,|\eta'_i| = 2} \right.\right\}
\;=\; \sum\limits_{\substack{\xi',\eta' \;\vdash [k] \\ |\xi'_j|,|\eta'_i| = 2}} \psi(\xi',\eta') \;.$$

Two partitions $\xi',\eta' \vdash [k]$ define a bipartite \emph{intersection graph} $G(\xi',\eta')$ with sides $\xi'$ and $\eta'$, where the number of edges between $\xi'_j$ and $\eta'_i$ is $|\xi'_j \cap \eta'_i|$, the size of their intersection.
Note that the size of $\xi'_i$ is the degree of the corresponding vertex in $G(\xi',\eta')$.
In our case where $|\xi'_j|,|\eta'_i| = 2$, the graph is $2$-regular, and so it is a disjoint union of $m$ cycles of even lengths $l_1,l_2,\dots,l_m$.
By definition of $P(\xi,\eta)$, it is easy to see that $\psi(\xi',\eta')$ depends only on the intersection graph, and not on the vertex labels.

Here Mingo and Nica derive the exact value of $\psi(\xi',\eta')$, relying on classical work of D. Andr\'{e} on alternating permutations.
The same formula will be proved in a more general context in Lemma~\ref{cycle} below.
In this case, it gives
$$ \psi(\xi',\eta') \;=\; \beta_{l_1}\beta_{l_2}\cdots\beta_{l_m}\;\;.$$
where for $l$ even $\beta_l = (-1)^{l/2+1}B_{l}/l!$ and $B_{l}$ is the $l$th Bernoulli number~\cite[p. 1040]{jeffrey2007table}. For odd $l$ we set $\beta_l=0$.

In conclusion, $\Lambda_k$ is a sum of such products,
going over all pairs of $[k]$-partitions whose intersection graph is a disjoint union of even cycles.

We separate the sum according to the number $m$ of cycles in the intersection graph.
In addition to summing $\psi(\xi',\eta')$ over pairs of partitions, it is convenient to sum over all $m!$ ways to number the cycles in their intersection graph with $\{1,\dots,m\}$, and then divide by $m!$.

Consider a sequence of $m$ positive even integers $l_1,l_2,\dots,l_m$ such that $\sum_il_i=k$.
Let $G$ be a bipartite graph consisting of $m$ cycles, where cycle number $i$ has length $l_i$.
How many times does $G$ appear in the summation as an intersection graph with numbered cycles?
Every assignment of $[k]$ to the edges of $G$ yields a pair of suitable partitions corresponding to the two sides of the graph as follows.
The parts corresponding to each side of the graph consist of two labels each,
which are assigned to the two edges incident to a vertex on the given side.
A priori, there are $k!$ such assignments, but this should be corrected for symmetries.
There are $l_i$ ways to permute the assignments of the $i$th cycle without changing the pairs it contributes to each partition.
In conclusion, $G$ appears in the sum $k!/l_1l_2 \cdots l_m$ times.

Thus we express $\Lambda_k$ by summation over all ordered decompositions of $k$ into $m$ even terms:
$$ \Lambda_k \;=\; \sum\limits_{m}\;\sum\limits_{\substack{l_1+l_2+\dots+l_m=k\\l_i>0, \text{ even}}} \frac{k!\cdot\beta_{l_1}\beta_{l_2}\cdots\beta_{l_m}}{m!\cdot l_1l_2 \cdots l_m} \;\;.$$
The corresponding exponential generating function has a nice form
$$ {\mathcal L}(z) \;:=\; \sum_{k=0}^{\infty} \frac{\Lambda_k}{k!} z^k
\;=\; \sum\limits_{m=0}^{\infty}\frac{1}{m!}\;\prod\limits_{j=1}^m \;\sum\limits_{l_j=1}^{\infty} \frac{\beta_{l_j}}{l_j}z^{l_j}
\;=\; \exp \left(\sum_{l=1}^{\infty} \frac{\beta_l}{l} z^l \right) \;\;.$$
By~\cite[p. 42]{jeffrey2007table},
$$ z\frac{d}{dz}\log {\mathcal L}(z) \;=\; \sum\limits_{l=1}^{\infty}\beta_lz^l \;=\;
\sum\limits_{m=1}^{\infty} \frac{(-1)^{m+1} B_{2m}}{(2m)!} z^{2m} \;=\; 1 - \frac{z}{2} \cdot \cot\left(\frac{z}{2}\right) \;\;,$$
which yields
$$ {\mathcal L}(z) \;=\; \frac{z/2}{\sin(z/2)} \;,$$
The exponential moment generating function ${\mathcal L}(z)$, of the limiting moments $\{\Lambda_k\}_{k=0}^{\infty}$, is analytic at~$0$. By the method of moments~\cite[Chapter 8]{mingo1998distribution,breiman1992probability}, this means that the limit distribution of $lk(L_{4n})/2n$ is uniquely determined by these moments, and obtained by the inverse Fourier transform, ${\cal F}^{-1}\left[{\mathcal L}(iz)\right](t) = \pi/2\cosh^2(\pi t)$~\cite[p. 1120]{jeffrey2007table}.
Theorem~\ref{linking} follows.\qed

\section{The Casson Invariant of a Random Knot}\label{cassonsect}

We now consider random knots. The Casson invariant $c_2$ is the second coefficient of the Conway polynomial~(\cite{kauffman1987knots}, Chapter 3). On the unknot it vanishes: $c_2(\bigcirc) = 0$. The extension of $c_2$ to a singular knot with one double point turns out to be the linking number of the two-component link, obtained by the \emph{smoothing} operation that replaces \rlap{$\nearrow$}\rlap{$\searrow$} $\;\;\;$ with $\mathlarger{\mathlarger{\mathlarger{\mathlarger{ \substack{\curvearrowbotright \\[-3pt] \curvearrowright \\[-5pt]}}}}}$ at the singular point. This is enough to determine $c_2$ for all knots. Indeed, every knot can be unknotted passing only through a finite number of singular knots with one double point. The Casson invariant is then the sum of the corresponding linking numbers, with appropriate signs. The outcome does not depend on the specific choices of unknotting steps, see, e.g.,~\cite{ball1981sequence}.

This definition leads to a formula for the Casson invariant of a knot given any of its diagrams~\cite{polyak1994gauss}. Choose a base point, and travel along the knot. A crossing is \emph{descending} if its upper strand is visited before its lower one, and \emph{ascending} otherwise.

\begin{lemma}\label{c2formula}
Given a knot diagram of $K$ with a base point,
$$ c_2(K) \;=\; \sum\limits_{p,q} \; \text{sign}(p) \cdot \text{sign}(q) \;. $$
where the sum is over pairs of crossings $(p,q)$ that are encountered traveling along $K$ from the base point in the order $p,q,p,q$ with $p$ ascending and $q$ descending.
\end{lemma}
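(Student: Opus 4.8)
The plan is to derive the formula from the two facts established above: first, that $c_2$ extends to singular knots by the recursion $v(L)=v(L_p^+)-v(L_p^-)$ and that the value of $c_2$ on a one-double-point singular knot equals the linking number of the two-component link obtained by smoothing at the double point; second, the diagrammatic formula for the linking number of a two-component link, namely the signed count of crossings at which the first component passes over the second. The strategy is a Gauss-diagram style argument: start from the given diagram of $K$ with its base point, and express $c_2(K)$ as a telescoping sum of linking numbers over singular knots that interpolate between $K$ and the unknot, then match each surviving contribution to an ordered pair of crossings with the stated ascending/descending and $p,q,p,q$ pattern.

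Concretely, I would first recall that $c_2$ is a Vassiliev invariant of order $2$, so it is determined by a ``Gauss diagram formula'' supported on sub-configurations of at most two chords; the key input is that for a singular knot with \emph{one} double point $p$, the value $c_2$ equals $lk$ of the smoothed link, and this linking number is itself a signed count of \emph{one} additional crossing $q$ that links the two smoothing-components. Tracking orientations through the smoothing shows that the relevant pairs $(p,q)$ are exactly those where, reading $K$ from the base point, one meets the four strand-passages in the cyclic order $p,q,p,q$ — i.e. the two chords are ``linked'' in the Gauss diagram — and the contribution is $\mathrm{sign}(p)\,\mathrm{sign}(q)$. The ascending/descending refinement (p ascending, q descending) arises because the Gauss-diagram formula for $c_2$ that one gets this way is not symmetric in the two chords: resolving $p$ as a double point and then computing the over/under count for $q$ distinguishes which of the two chords plays the role of the ``double point being resolved'' versus the ``crossing counted by $lk$.'' I would verify the signs and the precise order pattern on the standard local pictures, and check the formula is base-point independent and diagram independent by confirming invariance under the Reidemeister moves (only $R2$ and $R3$ create or destroy linked chord pairs, and the signed contributions cancel appropriately); alternatively, cite that any order-$2$ invariant vanishing on the unknot has a unique such Gauss-diagram representative and that the right-hand side is manifestly such an invariant, so it suffices to check agreement on a single nontrivial example such as the trefoil, where both sides equal $1$.

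The main obstacle I expect is bookkeeping the orientations and signs precisely enough to pin down the asymmetric condition ``$p$ ascending, $q$ descending'' rather than a symmetric sum over linked pairs (which would overcount by a factor of $2$ or produce the wrong answer on chiral examples). This requires care in how the smoothing at $p$ is oriented — the replacement of the crossing strands by the two ``curved'' arcs must respect the orientation of $K$ so that the resulting link is genuinely oriented — and in reading off which strand of $q$ is ``over'' in the smoothed link versus in the original diagram. Once that local analysis is done consistently, the global statement follows by the telescoping/unknotting argument together with the independence (from the choice of unknotting sequence) already cited for the definition of $c_2$.
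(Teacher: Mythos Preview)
Your overall framework matches the paper's: telescope $c_2(K)-c_2(\bigcirc)$ over an unknotting sequence, and at each crossing change use that $c_2$ of the intermediate singular knot is the linking number of the smoothed link. But you have not actually executed the step you yourself flag as the obstacle, and the paper's proof turns precisely on a concrete choice you leave unspecified.

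The missing idea is \emph{which} unknotting sequence to use and \emph{why} it forces the condition ``$q$ descending in the original diagram.'' The paper travels from the base point and, at the first visit to each ascending crossing $p$, flips it. This produces a descending diagram, hence the unknot. The point is that at the moment $p$ is flipped, every crossing first met before $p$ has already been made descending, while every crossing first met after $p$ is untouched. Now compute the linking number of the smoothed link at $p$: one component contains the base point, the other does not; the relevant $q$'s are those where the latter passes over the former. Because the arc from the base point up to $p$ is already descending, any such $q$ must have its lower strand on the arc from the second visit to $p$ back to the base point. Hence $q$ has not been visited yet, so its over/under data is still that of the \emph{original} diagram, and there it is descending. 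That is the entire content of the asymmetric ascending/descending condition; it is not a local sign check but a consequence of the order of operations.

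Your alternative route---show the right-hand side is a knot invariant (Reidemeister invariance or uniqueness of order-$2$ invariants vanishing on the unknot) and match it to $c_2$ on the trefoil---is a genuinely different and valid argument, essentially the Polyak--Viro approach, but it is not what you would get by carrying out the telescoping you describe.
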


\begin{proof}
To prove this formula, flip each ascending crossing point $p$ at its first visit along the travel. This process terminates when we return to the base point. The resulting knot diagram is clearly always descending and represents the unknot. How does $c_2$ change as we flip $p$? According to the definition of $c_2$ on singular knots, the change is $\pm$ the linking number of the diagram smoothed at $p$, at the moment of the $p$-flip.

As explained in Section~\ref{linksect}, this linking number is a sum of signs over certain crossing points. One component of the smoothed link contains the base point and the other does not. We consider all crossing points $q$ where the latter passes above the former. Since at the moment of the $p$-flip all crossings prior to the first visit at $p$ are already descending, such $q$ must have a lower strand between the second visit at $p$ and the return to the base point. But such crossings $q$ haven't been visited yet. They are characterized as being descending in the original diagram, as stated in the formula.
\end{proof}

\begin{rmk*}
This formula is a member of the large class of Gauss diagram formulas, which involve the numbers of certain configurations of crossings in knot diagrams. See Section~\ref{j3sect} for the general definition.
\end{rmk*}

\begin{figure}[htb]
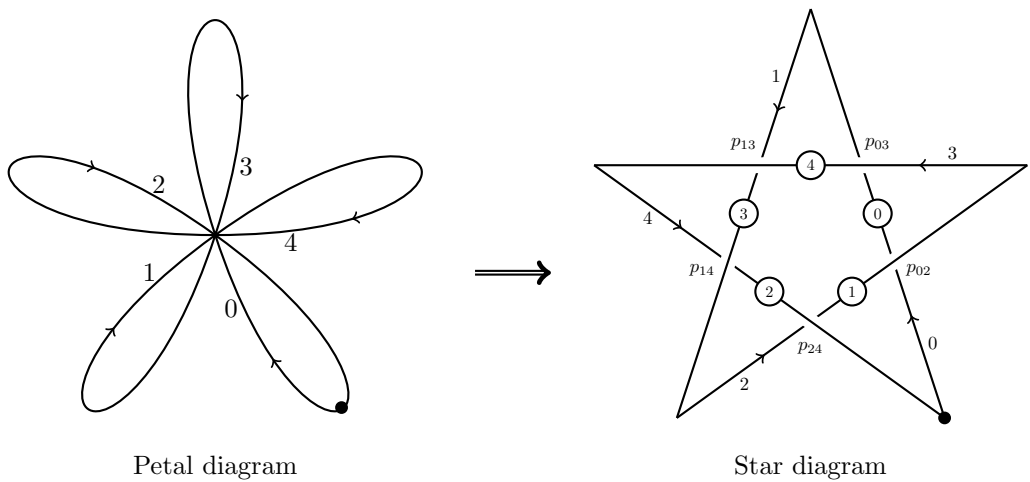

\begin{center}
\begin{tabular}{ccc}
\tikz[thick]{
\foreach \angle in {0, 72, ..., 288} \draw[decoration={markings,mark=at position 0.7 with {\arrow{>}}},postaction={decorate}] (0,0) .. controls +(\angle+36:4) and +(\angle:4) .. (0,0);
\filldraw (0,0)+(306:2.83) circle (2pt);
\foreach \angle/\num in {0/4, 72/3, 144/2, 216/1, 288/0} \draw (0,0)+(\angle-6:1) node[scale=1] {$\num$};
\pgfresetboundingbox \clip (-3,-2.75) rectangle (3,3);} &
\tikz[thick]{
\draw[->,double,thick] (-0.5,-0.5) -- (0.5,-0.5);
\pgfresetboundingbox \clip (-0.5,-2.75) rectangle (0.5,3);} &
\tikz[thick]{
\foreach \x in {0,1,...,4} {
\draw[>=triangle 90 cap,white,<->,line width = 6,shorten >=3, shorten <=3] (\x*288-54:3) -- (\x*288+90:3);
\draw[black,decoration={markings,mark=at position 0.25 with {\arrow{>}}},postaction={decorate}] (\x*288-54:3) -- node[fill=white,draw,circle,scale=0.6]{\x} (\x*288+90:3); }
\foreach \x in {0,1,...,4} {
\draw (\x*144-54:3)+(\x*144+99:1) node[auto,scale=0.7] {\x};}
\foreach \p/\x in {24/0,03/1,14/2,02/3,13/4} \node[scale=0.7] at (\x*144-90:1.5){$p_{\p}$};
\filldraw (306:3) circle (2pt);
\pgfresetboundingbox \clip (-3,-2.75) rectangle (3,3);
} \\
Petal diagram & & Star diagram
\end{tabular}
\end{center}
\caption{Petal to star diagram for $\pi=(0,3,1,4,2)$. The circled label attached to a segment gives its height, while the uncircled ones denote the segments' numbering.}
\label{fig_star5}
\end{figure}

In order to apply Lemma~\ref{c2formula} to our random knots, we turn a petal diagram into an ordinary knot diagram, with only simple crossings. Following Adams et al.~\cite{adams2013bounds}, we do so by straightening segments between petal tips, thus obtaining an equivalent \emph{star diagram}, as demonstrated in Figure~\ref{fig_star5}. Note that the horizontal segments are joined above the vertices of the star by vertical ones, that project to a point in the diagram. We always choose the orientation of the petal and corresponding star diagram as in Figure~\ref{fig_star5}. Starting at the same base point as in the petal diagram, we identify the segments with the integers $\{0,\dots,2n\}$. Example~\ref{trefoil} goes on with the computation of the Casson invariant of this knot.

\begin{example}\label{trefoil}
We use the diagram in Figure~\ref{fig_star5} to compute the Casson invariant for $\pi = (0,3,1,4,2)$. This means that segments $0,1,2,3,4$ are at heights $0,3,1,4,2$ respectively. Denote by $p_{\alpha\beta}$ the crossing of segment $\alpha$ with segment $\beta$. Note that $p_{02}$, $p_{03}$, $p_{13}$ and $p_{24}$ are ascending and $p_{14}$ is descending. Traveling around the knot tells us that the sum in Lemma~\ref{c2formula} is over the pairs $(p_{02},p_{14})$, $(p_{03},p_{14})$, $(p_{13},p_{14})$. Since $\text{sign}(p_{02}) = \text{sign}(p_{13}) = \text{sign}(p_{14}) = +1$ and $\text{sign}(p_{03}) = -1$, the Casson invariant is
$$ c_2(K_5(\pi)) \;=\; (+1)(+1) + (-1)(+1) + (+1)(+1) \;=\; 1 \;.$$
This diagram represents the \emph{positive trefoil} knot.
\end{example}

Let $\pi \in S_{2n+1}$ and $K=K_{2n+1}(\pi)$. We derive a general expression for $c_2(K)$ using $K$'s star diagram. We first describe the crossings along $K$ and the pairs relevant to Lemma~\ref{c2formula}, and then sum their contributions.

\begin{figure}[htb]
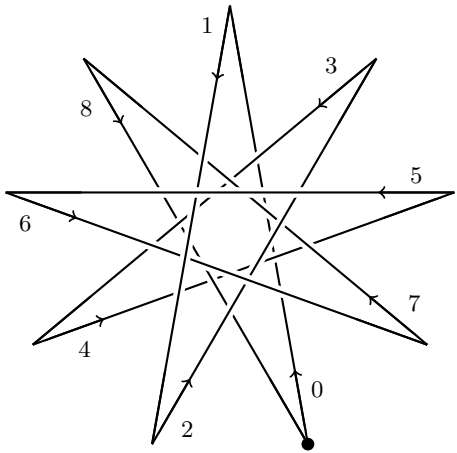

\begin{center}
\tikz[thick]{
\foreach \x in {0,1,...,8} {\draw[white,-,line width = 4] (\x*280-70:3) -- (\x*280+90:3); \draw[black] (\x*280-70:3) -- (\x*280+90:3); }
\foreach \x in {0,1,...,8} { \draw[->] (\x*160-70:3) -- node[auto,swap,pos=0.5]{\small\x} +(\x*160+100:1); \draw (\x*280-70:3) -- +(\x*280+120:1); }
\filldraw (290:3) circle (2pt);
\pgfresetboundingbox \clip (-3,-2.5) rectangle (3,3);}
\end{center}
\caption{Star diagram of some $9$-petal knot.}
\label{fig_star9}
\end{figure}

The underlying curve of the $2n+1$ star diagram is fixed for each $n$, while the permutation $\pi$ determines only the overcrossing/undercrossing information. In particular, each segment meets all non-adjacent segments in a fixed order. For example, in Figure~\ref{fig_star9} where $n=4$, segment $0$ crosses segments $6,4,2,7,5,3$ in this order, regardless of $\pi$. The general ordering is similarly given in the following proposition.

\begin{prop}\label{order}
A segment $\alpha \in \{0,\dots,2n\}$ in the star diagram meets segments
$$\alpha-3,\; \alpha-5,\; \dots,\; \alpha+4,\; \alpha+2,$$
and then
$$\alpha-2,\; \alpha-4,\; \dots,\; \alpha+5,\; \alpha+3.$$
In these two sequences, the segment number decreases by $2$ modulo $(2n+1)$.
\end{prop}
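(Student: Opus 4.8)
The plan is to work out the combinatorial geometry of a single star diagram explicitly. The $2n+1$ segments of the star diagram are chords of a regular $(2n+1)$-gon, where segment $\alpha$ connects vertex $v_\alpha$ to vertex $v_{\alpha+n}$ (indices mod $2n+1$), since travelling along the petal/pre-petal curve we jump by $n$ each time, as dictated by the Adams construction and visible in Figure~\ref{fig_star9}. So first I would fix coordinates: place $v_j$ at angle $2\pi j/(2n+1)$ on the unit circle, and identify segment $\alpha$ with the chord $\overline{v_\alpha v_{\alpha+n}}$. Then the crossings along segment $\alpha$, read in the order they are encountered when travelling from $v_\alpha$ to $v_{\alpha+n}$, correspond to the order in which the chords $\overline{v_\beta v_{\beta+n}}$ cross this chord, sorted by their intersection point's position along it.

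Next I would carry out the elementary but slightly fiddly calculation of \emph{which} chords cross segment $\alpha$ and \emph{in what order}. Two chords of a convex polygon cross iff their endpoints interleave around the circle; applying this to $\overline{v_\alpha v_{\alpha+n}}$ and $\overline{v_\beta v_{\beta+n}}$ characterizes the non-adjacent $\beta$ (the adjacent segments $\alpha\pm1$ share a vertex and don't count as crossings). For the ordering, I would observe that as $\beta$ increases by $1$, the chord $\overline{v_\beta v_{\beta+n}}$ rotates, and its crossing point with the fixed chord $\overline{v_\alpha v_{\alpha+n}}$ moves monotonically in one direction along that chord — but the set of crossing $\beta$'s splits into two runs (those with one endpoint in the short arc on one side and those on the other), which is exactly what produces the two interleaved arithmetic progressions with common difference $-2 \bmod (2n+1)$ in the statement. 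Concretely I would check the endpoints of each progression: starting from $v_\alpha$, the first chord met is the one ``closest'' to $v_\alpha$, namely $\overline{v_{\alpha-3}v_{\alpha+n-3}}$ — wait, more carefully, one verifies by direct computation of intersection parameters (or by the small-case check $n=4$ against Figure~\ref{fig_star9}, where segment $0$ meets $6,4,2,7,5,3$, i.e. $-3,-5,-7\equiv2,\ -2,-4,-6\equiv1\dots$ matching $\alpha+4,\alpha+2$ then $\alpha+5,\alpha+3$ for $n=4$) that the first progression is $\alpha-3,\alpha-5,\dots$ terminating at $\alpha+2$ and the second is $\alpha-2,\alpha-4,\dots$ terminating at $\alpha+3$.

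The cleanest way to nail the monotonicity is via a linear functional: parametrize points of the chord $\overline{v_\alpha v_{\alpha+n}}$ by a real parameter $\tau$ increasing from $v_\alpha$ to $v_{\alpha+n}$, express the intersection point of $\overline{v_\beta v_{\beta+n}}$ with it, and show $\tau(\beta)$ is a monotone function of the cyclic position of $\beta$ within each of the two runs. This reduces to a sign computation with a single trigonometric expression (a ratio of sines of arc-differences), and monotonicity is then immediate. I expect this sign/monotonicity computation to be the main obstacle: it is entirely routine but requires care in handling the cyclic indices modulo $2n+1$ and in correctly identifying where the two runs begin and end, so that the stated endpoints $\alpha+2$ and $\alpha+3$ come out exactly right rather than off by $2$. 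Once monotonicity and the endpoints are pinned down, the two arithmetic progressions with step $-2 \bmod (2n+1)$ follow by simply listing $\beta$ in increasing-$\tau$ order, and the proposition is proved. An alternative, perhaps even shorter, route is an induction or a direct bijective argument tracking how the crossing order on segment $\alpha$ relates to that on segment $\alpha+1$ by the rotational symmetry of the star, but I would still want the explicit base computation to fix the endpoints.
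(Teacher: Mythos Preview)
The paper states Proposition~\ref{order} without proof; it is presented as a direct observation about the geometry of the star diagram, supported by the explicit example $n=4$ in Figure~\ref{fig_star9}, and then used immediately. So there is no ``paper's own proof'' to compare against.

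Your plan is a correct way to supply the missing argument. Modeling the segments as the long chords $\overline{v_\alpha v_{\alpha+n}}$ of a regular $(2n+1)$-gon, invoking the interleaving criterion for crossing chords, and then ordering the intersection points along the fixed chord by a monotone parameter is exactly the right computation. The splitting into two arithmetic progressions with step $-2 \bmod (2n+1)$ arises because the chords crossing $\overline{v_\alpha v_{\alpha+n}}$ fall into two families according to which endpoint lies on the short arc from $v_\alpha$ to $v_{\alpha+n}$, and within each family the intersection point moves monotonically.

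Two small points to clean up. First, your sanity check contains an arithmetic slip: $-6 \equiv 3 \pmod 9$, not $1$; the sequence $7,5,3$ does match $\alpha-2,\alpha-4,\alpha-6$ for $\alpha=0$, $n=4$. Second, when you write ``as $\beta$ increases by $1$, the chord rotates and the crossing point moves monotonically,'' this is not quite the structure you want: the relevant monotone parameter within each run is $\beta$ stepping by $-2$ (equivalently, by $2n-1$), not by $1$. You should track the intersection parameter as a function of the cyclic position of the near endpoint in the appropriate arc, and note that consecutive segments in the traversal order correspond to that endpoint advancing by one vertex along the arc, which translates to a jump of $-2$ in segment index. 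Once you phrase it this way the monotonicity of the sine-ratio is immediate and the endpoints $\alpha+2$ and $\alpha+3$ fall out without ambiguity.
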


We consider pairs of crossings $p$ and $q$ that contribute to $c_2$. As we traverse the curve from the base point, the crossings occur successively on segments $\alpha \leq \beta \leq \gamma \leq \delta$. By the $p,q,p,q$ condition in Lemma~\ref{c2formula}, the two crossings are $p=p_{\alpha\gamma}$ and $q=p_{\beta\delta}$.

If the segments are distinct, so that $\alpha < \beta < \gamma < \delta$, then this pair of crossings participates in the $c_2$ formula whenever $p_{\alpha\gamma}$ is ascending and $p_{\beta\delta}$ is descending. There are also potentially relevant pairs of crossings that lie on a common segment, e.g. where $\alpha = \beta < \gamma < \delta$. Here $p_{\alpha\gamma}$ must precede $p_{\beta\delta}$ in the joint segment $\alpha = \beta$ in order to satisfy the $p,q,p,q$ condition, and we turn to characterize these pairs.

\newlength{\myparindent}\setlength{\myparindent}{\parindent}
\begin{center}
\begin{tabular}{@{}m{0.5\textwidth}@{}@{}m{0.5\textwidth}@{}}
\hspace{\myparindent} 
By Proposition~\ref{order}, the crossing points $p_{\alpha x}$, $x \neq \alpha \pm 1$, lie along the segment $\alpha$ according to the following order.
\begin{enumerate}
\item $x<\alpha$, $\;x \not \equiv \alpha \mod 2$.
\item $x>\alpha$, $\;x \equiv \alpha \mod 2$.
\item $x<\alpha$, $\;x \equiv \alpha \mod 2$.
\item $x>\alpha$, $\;x \not \equiv \alpha \mod 2$.
\end{enumerate} 
Inside each part, the segment ordering is decreasing. This is illustrated on the right for the $21$-star diagram with $\alpha=12$. & 
\begin{center}
\raisebox{-0.5\totalheight}{\tikz{\foreach \x/\y in {0/9,1/7,2/5,3/3,4/1,5/20,6/18,7/16,8/14,9/10,10/8,11/6,12/4,13/2,14/0,15/19,16/17,17/15} \draw (0.3*\x,0.2*\y) circle[radius=0.2] node[scale=0.8] {$\y$}; }}
\end{center} \\
\end{tabular}
\end{center}

Therefore, if $\alpha = \beta < \gamma < \delta$ and $p_{\alpha\gamma}$ precedes $p_{\alpha\delta}$ then necessarily $\alpha$ meets $\gamma$ in the second part and $\delta$ in the fourth. Hence $\gamma \equiv \alpha$ and $\delta \not\equiv \alpha\mod 2$. One may likewise examine the two cases $\beta = \gamma$ and $\gamma = \delta$, and conclude similar parity conditions. We cannot have more than one equality, since a segment doesn't cross itself, and two segments cross at most once. Let
$$
\mathcal{Q} \;:=\; \left\{ (\alpha, \beta, \gamma, \delta) \in \{0,\dots,2n\}^4 \;\left|\;
\begin{array}{l}
\alpha < \beta < \gamma < \delta \\
\alpha = \beta < \gamma < \delta \;\;\;\text{and}\;\;\;
\alpha \equiv \beta \equiv \gamma \not\equiv \delta \mod 2 \\
\alpha < \beta = \gamma < \delta \;\;\;\text{and}\;\;\;
\alpha \equiv \beta \equiv \gamma \equiv \delta \mod 2 \\
\alpha < \beta < \gamma = \delta \;\;\;\text{and}\;\;\;
\alpha \not\equiv \beta \equiv \gamma \equiv \delta \mod 2
\end{array}
\right.\right\}\;.
$$
The pairs that contribute to $c_2$ as in Lemma~\ref{c2formula} are $(p_{\alpha\gamma}, p_{\beta\delta})$ with $(\alpha, \beta, \gamma, \delta) \in \mathcal{Q}$, where $p_{\alpha\gamma}$ is ascending and $p_{\beta\delta}$ is descending.

For a crossing in a given diagram, the question whether it is descending or ascending is coupled to the question whether it is positive or negative. For star diagrams, if $\alpha < \gamma$ and $\pi(\alpha) < \pi (\gamma)$ then the sign of the crossing $p_{\alpha\gamma}$ is given by $(-1)^{\alpha+\gamma}$. Similarly, if $\beta < \delta$ and $\pi(\beta) > \pi (\delta)$ then $\text{sign}(p_{\beta\delta}) = (-1)^{\beta+\delta+1}$. By Lemma~\ref{c2formula} the Casson invariant is
$$ c_2(K_{2n+1}(\pi)) \;=\; \sum\limits_{(\alpha,\beta,\gamma,\delta) \in \mathcal{Q}} (-1)^{\alpha+\beta+\gamma+\delta+1} \cdot \mathcal{I}\left[\substack{\pi(\alpha) < \pi(\gamma) \\ \pi(\delta) < \pi(\beta)}\right] $$
where, as before, $\mathcal{I}[condition]=1$ if the condition holds, and else $0$.

\subsection*{Positive Expectations}\label{Positive-Expectations}

As a warm-up to later calculations we evaluate the first moment $E[c_2]$. By linearity, it is a signed sum of probabilities,
$$ E\left[c_2(K_{2n+1}(\pi))\right] \;=\;
\sum\limits_{(\alpha,\beta,\gamma,\delta) \in \mathcal{Q}} (-1)^{\alpha+\beta+\gamma+\delta+1} \cdot
P\left[\substack{\pi(\alpha) < \pi(\gamma) \\ \pi(\delta) < \pi(\beta)}\right] $$
The probability that $\pi(\alpha) < \pi(\gamma)$ and $\pi(\delta) < \pi(\beta)$ for a random $\pi \in S_{2n+1}$ depends on the quadruple's type. For terms of the first type, where $\alpha<\beta<\gamma<\delta$, it involves four distinct entries of $\pi$, a uniformly sampled permutation. Since $P[\pi(\alpha) < \pi(\gamma)] = P\left[\pi(\delta) < \pi(\beta)\right] = 1/2$ independently, the probability of the conjunction is $1/4$. However, if $\alpha=\beta$ then the probability of $\pi(\delta) < \pi(\beta) = \pi(\alpha) < \pi(\gamma)$ is only $1/6$, since all six orderings of three entries of $\pi$ are equally likely. Similarly for $\gamma=\delta$. For the case $\beta=\gamma$, the probability of $\pi(\alpha) < \pi(\gamma) = \pi(\beta) > \pi(\delta)$ is $1/3$, as a specific one of three entries has to be the highest.

It remains to count with signs the terms of each type. Suppose that the segments $\alpha,\beta,\gamma,\delta$ are distinct in the range $\{0,\dots,2n\}$ in which $n+1$ segments are even and $n$ are odd. Splitting the sum of signs according to how many segments among the four are even, we obtain
$$ \sum\limits_{\substack{(\alpha,\beta,\gamma,\delta) \in \mathcal{Q} \\ \alpha<\beta<\gamma<\delta}} (-1)^{\alpha+\gamma+\beta+\delta+1} \;=\; -\tbinom{n}{4} +\tbinom{n}{3}(n+1) -\tbinom{n}{2}\tbinom{n+1}{2} +n\tbinom{n+1}{3} -\tbinom{n+1}{4} \;=\; -\tbinom{n}{2} \;.$$
When $\beta=\gamma$, the sum is over triples of segments of the same parity, either even or odd, by definition of $\mathcal{Q}$. Hence,
$$ \sum\limits_{\substack{(\alpha,\beta,\gamma,\delta) \in \mathcal{Q} \\ \beta=\gamma}} (-1)^{\alpha+\gamma+\beta+\delta+1} \;=\; - \tbinom{n+1}{3} - \tbinom{n}{3} \;.$$
Counting $\alpha = \beta < \gamma < \delta$ such that $\delta$ is odd and the rest are even, is equivalent to counting triples $\{\alpha,\gamma,\delta+1\}$ of distinct even numbers $\leq 2n$. So is the case of complementary parities, with triples $\{\alpha-1,\gamma-1,\delta\}$. Together with the similar case of $\gamma = \delta$, this yields
$$ \sum\limits_{\substack{(\alpha,\beta,\gamma,\delta) \in \mathcal{Q} \\ \alpha=\beta}} (-1)^{\alpha+\gamma+\beta+\delta+1} \;+\; \sum\limits_{\substack{(\alpha,\beta,\gamma,\delta) \in \mathcal{Q} \\ \gamma=\delta}} (-1)^{\alpha+\gamma+\beta+\delta+1} \;=\; 4 \tbinom{n+1}{3} \;.$$
In conclusion,
\begin{equation}\label{expectation}
E[c_2] \;=\; \frac{-\tbinom{n}{2}}{4} \;+\; \frac{-\tbinom{n+1}{3} - \tbinom{n}{3}}{3} \;+\; \frac{4\tbinom{n+1}{3}}{6} \;=\; \frac{\tbinom{n}{2}}{12} \;,
\tag{$\star\star$}
\end{equation}
proving Theorem~\ref{moments2} for $k=1$.
This yields the convergence of the normalized expectation,
$$E\left[\frac{c_2(K_{2n+1})}{n^2}\right] \;\xrightarrow {n \rightarrow \infty}\; \frac{1}{24}\;.$$
This result already calls for a few remarks. 
\begin{itemize}
\item 
The positivity of the expected Casson invariant in this model, both for each $n$ and in the limit, is intriguing. It is unclear whether this is an artifact of the model, but we will give experimental evidence that it occurs in other models as well. 
\item
Although there are $\Theta(n^4)$ terms in the sum leading to the expectation, its value is of order $n^2$. A similar result for all moments is the content of Theorem~\ref{moments2}. This should not be taken for granted. In a slightly modified random model that we call the star model and discuss below, the expectation is of order~$n^3$.
\item
The maximum of $c_2$ over all diagrams with $2n+1$ petals has nevertheless order $n^4$. Indeed, as shown in \cite{adams2012knot}, the permutation defined by
$$ \pi(k) \;=\; nk \mod 2n+1 $$
yields the $(n,n+1)$ torus knot, and $c_2(\pi) = \tbinom{n+2}{4} \approx n^4 / 24$ since the Casson invariant of the~$(p,q)$~torus knot is given by $(p^2-1)(q^2-1)/24$~\cite{alvarez1996vassiliev}. 
\item
Despite its positive bias, the distribution of the Casson invariant reaches values of order $n^4$ also in its negative tail. A more involved permutation $\pi \in S_{2n+1}$, that we have constructed, but do not describe here, yields knots with $c_2(\pi) < -n^4/200$.
\end{itemize}

\subsection*{Proof of Theorem \ref{moments2}}

Consider the $k$th power of the Casson invariant of a random knot,
$$ c_2^k(K_{2n+1}(\pi)) \;=\;
\left(\sum\limits_{(\alpha,\beta,\gamma,\delta) \in \mathcal{Q}} (-1)^{\alpha+\beta+\gamma+\delta+1} \cdot
\mathcal{I}\left[\substack{\pi(\alpha) < \pi(\gamma) \\ \pi(\delta) < \pi(\beta)}\right]\right)^k \;\;. $$
Our aim is to show that its expectation is a polynomial in $n$ of degree at most $2k$. The proof has four main steps.

\begin{enumerate}[leftmargin=12pt]
\item \textbf{Patterns, Parities and Permutations}

We henceforth denote a quadruple in $\mathcal{Q}$ by $Q_i = (q_{i1}, q_{i2}, q_{i3}, q_{i4})$. A sequence of $k$ quadruples will be denoted by $\bold{Q} = (Q_1,\dots,Q_k) \in \mathcal{Q}^k$. Thus the above expression expands to
$$ c_2^k(K_{2n+1}(\pi)) \;=\; \sum\limits_{\bold{Q} \in \mathcal{Q}^k}
\;\prod\limits_{i=1}^{k} (-1)^{q_{i1}+q_{i2}+q_{i3}+q_{i4}+1} \cdot \mathcal{I}\left[\substack{\pi(q_{i1}) < \pi(q_{i3}) \\ \pi(q_{i4}) < \pi(q_{i2})}\right] \;.$$
We split the sum according to equalities and order relations between the $4k$ segments~$\{q_{ij}\}$, that occur in the $2k$ crossings. In order to encode this information, we make the following definition. A \emph{pattern} $\bold{T} = (T_1,\dots,T_k)$ is a sequence of quadruples of natural numbers, $T_i = (t_{i1},t_{i2},t_{i3},t_{i4})$, such that for some~$t \in \N$,
$$\bigcup\limits_{i=1}^k T_i = \{1,\dots,t\}\;.$$
Clearly $t = t(\bold{T}) = \max \{t_{ij}\} \leq 4k$.

Let $\bold{Q} = (Q_1,\dots,Q_k) \in \mathcal{Q}^k$. Then there exists a unique pattern $\bold{T}$ such that for each $i$ and~$j$,
$$q_{ij} < q_{i'j'} \;\Leftrightarrow\; t_{ij} < t_{i'j'} \;.$$
The pattern $\bold{T}$ is obtained from $\bold{Q}$ by the unique order preserving bijection between $\bigcup_i Q_i$ and $\{1,\dots,t\}$, eliminating the gaps between segment numbers. Note that $t_{i1} \leq t_{i2} \leq t_{i3} \leq t_{i4}$ for every $i$, with at most one equality. $\mathcal{T}_k$ denotes the collection of all patterns corresponding to $\mathcal{Q}^k$, for $n$ large enough. 

We further split the sum according to the parities of $\{q_{ij}\}$. This is encoded by a function $\varepsilon:[t]\rightarrow\nobreak\pm1$, where $\varepsilon(t_{ij}) = (-1)^{q_{ij}}$, or in short by a vector $\varepsilon \in \{\pm \}^t$.

Given a pattern $\bold{T} \in \mathcal{T}_k$ and a parity vector $\varepsilon \in \{\pm \}^t$ where $t = t(\bold{T})$, we denote by $\mathcal{Q}^k(\bold{T},\varepsilon)$ the set of all $\bold{Q} \in \mathcal{Q}^k$ with pattern $\bold{T}$ and parities $\varepsilon$.

The rearranged sum is
$$ c_2^k(K_{2n+1}(\pi)) \;=\; \sum\limits_{\bold{T} \in \mathcal{T}_k}\; \sum\limits_{\varepsilon \in \{\pm\}^t}\;\sum\limits_{\bold{Q} \in \mathcal{Q}^k(\bold{T},\varepsilon)}(-1)^k\prod\limits_{i=1}^{k} \varepsilon(t_{i1})\varepsilon(t_{i2})\varepsilon(t_{i3})\varepsilon(t_{i4}) \cdot \mathcal{I}\left[\substack{\pi(q_{i1}) < \pi(q_{i3}) \\ \pi(q_{i4}) < \pi(q_{i2})}\right] \;.$$

The expectation of $c_2^k$ is the average of the above sum over $\pi \in S_{2n+1}$. By linearity, it can be computed for every $\bold{Q} \in \mathcal{Q}^k$ separately. Observe that, given $\bold{Q}$ with pattern $\bold{T}$, only the order among the $t(\bold{T})$ entries $\{\pi(q_{ij})\}$ affects that term. Each $\pi \in S_{2n+1}$ induces a unique permutation $\sigma \in S_t$ such that
$$\pi(q_{ij}) < \pi(q_{i'j'}) \;\Leftrightarrow\; \sigma(t_{ij}) < \sigma(t_{i'j'})\;.$$
Thus we may average over $\sigma$ instead of $\pi$. By symmetry, each $\sigma \in S_t$ has equal weight~$1/t!$.
$$ E[c_2^k] \;=\; \sum\limits_{\bold{T} \in \mathcal{T}_k}\; \sum\limits_{\varepsilon \in \{\pm\}^t}\; \sum\limits_{\bold{Q} \in \mathcal{Q}^k(\bold{T},\varepsilon)} (-1)^k\;\frac{1}{t!} \sum\limits_{\sigma \in S_t} \;\prod\limits_{i=1}^{k} \varepsilon(t_{i1})\varepsilon(t_{i2})\varepsilon(t_{i3})\varepsilon(t_{i4}) \cdot \mathcal{I}\left[\substack{\sigma(t_{i1}) < \sigma(t_{i3}) \\ \sigma(t_{i4}) < \sigma(t_{i2})}\right] \;.$$
For fixed pattern, parities and permutation, every $\bold{Q} \in \mathcal{Q}^k(\bold{T},\varepsilon)$ contributes the same term from $\{-1,0,1\}$ to the above sum. We hence rewrite
$$ E[c_2^k] \;=\; \sum\limits_{\bold{T} \in \mathcal{T}_k}\; \sum\limits_{\varepsilon \in \{\pm\}^t}\; \sum\limits_{\sigma \in S_t} \frac{(-1)^k}{t!} \; \left|\mathcal{Q}^k(\bold{T},\varepsilon)\right| \;\prod\limits_{i=1}^{k} \varepsilon(t_{i1})\varepsilon(t_{i2})\varepsilon(t_{i3})\varepsilon(t_{i4}) \cdot \mathcal{I}\left[\substack{\sigma(t_{i1}) < \sigma(t_{i3}) \\ \sigma(t_{i4}) < \sigma(t_{i2})}\right] \;.$$
Note that the number of terms in the current sum is a function of $k$ and not of $n$. The dependence on $n$ is only through the factors $\left|\mathcal{Q}^k(\bold{T},\varepsilon)\right|$.

\item \textbf{Counting}

What is the size of $\mathcal{Q}^k(\bold{T},\varepsilon)$? Each $\bold{Q} \in \mathcal{Q}^k(\bold{T},\varepsilon)$ is determined uniquely by the $t$~numbers in $\{0,\dots,2n\}$ appearing in it. But not every one of the $\tbinom{2n+1}{t}$ options has the desired parity vector $\varepsilon \in \{\pm\}^t$. The following lemma counts how many do.

Denote by $z(\varepsilon_1,\dots,\varepsilon_t)$ the number of ``+" runs in $\varepsilon$. For example,
$$z(---) = 0, \;\; z(+++-) = 1, \;\; z(-+--+) = 2.$$

\begin{lemma}\label{runs}
Let $\varepsilon \in \{\pm\}^t$. Then $\;\# \{0 \leq q_1 < \cdots < q_t \leq 2n \;|\; \forall i\; (-1)^{q_i} = \varepsilon_i \} \;=\; \binom{n + z(\varepsilon)}{t}$.
\end{lemma}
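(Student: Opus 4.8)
My plan is to prove this by a direct bijective/counting argument that converts the parity constraint into an ordinary choice of $t$ elements from a set of size $n+z(\varepsilon)$. First I would set up the basic count: there are $n+1$ even integers ($0,2,\dots,2n$) and $n$ odd integers ($1,3,\dots,2n-1$) in $\{0,\dots,2n\}$. A valid tuple $0\le q_1<\cdots<q_t\le 2n$ with prescribed parities $\varepsilon$ is, equivalently, a choice of an increasing sequence from the evens at the positions $i$ with $\varepsilon_i=+$ and from the odds at the positions $i$ with $\varepsilon_i=-$ — but these two subsequences are interleaved and must together be strictly increasing, so they are not independent. The key observation is that the interaction between consecutive entries of opposite parity is mild: if $q_i$ is even and $q_{i+1}$ is odd, then $q_i<q_{i+1}$ automatically allows $q_{i+1}\ge q_i+1$; if $q_i$ is odd and $q_{i+1}$ is even, we need $q_{i+1}\ge q_i+1$. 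The standard device is to compress parities away.

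The cleanest route I would take is the substitution $q_i \mapsto r_i$ where we subtract off, from each $q_i$, the number of odd values among $q_1,\dots,q_i$... more precisely, I would define $r_i = \lceil q_i/2\rceil$ when $\varepsilon_i=-$ and $r_i=q_i/2$ when $\varepsilon_i=+$, so that $r_i$ ranges in $\{0,\dots,n\}$ in both cases. Then I would check how the strict inequality $q_i<q_{i+1}$ translates: within a run of equal parity it becomes $r_i<r_{i+1}$; at a descent from $+$ to $-$ (even followed by odd) it becomes $r_i\le r_{i+1}$ (a weak inequality); at an ascent from $-$ to $+$ (odd followed by even) it remains $r_i<r_{i+1}$. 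So the composite constraint on $(r_1,\dots,r_t)\in\{0,\dots,n\}^t$ is: weakly increasing, with a strict increase required at every position except the $+$-to-$-$ descents. The number of such descents is exactly (number of maximal $+$-runs that are followed by a $-$), which is $z(\varepsilon)$ if the last run is not a $+$-run and $z(\varepsilon)-1$ if it is; I need to be a little careful about boundary runs, and I would double-check the edge cases (leading/trailing runs, all-$+$, all-$-$) to make sure the count of "weak" slots is exactly $z(\varepsilon)$ after the right normalization — this bookkeeping is the part most likely to need care.

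Once the constraint is "weakly increasing sequence in $\{0,\dots,n\}$ of length $t$ with strict increase at all but $z(\varepsilon)$ prescribed slots," I would finish by the usual stars-and-bars shift: replacing $r_i$ by $r_i - \#\{j<i : \text{slot } j\text{ is strict}\}$ turns it into a weakly increasing sequence of length $t$ in $\{0,\dots,n - (t-1-z(\varepsilon))\} = \{0,\dots, n+z(\varepsilon)-t+1\}$ with no constraints, and the number of weakly increasing length-$t$ sequences from a set of size $M+1$ is $\binom{M+t}{t}$; here $M+1 = n+z(\varepsilon)-t+2$, giving $\binom{n+z(\varepsilon)+1}{t}$ — so I must instead arrange the shift to land on a set of size $n+z(\varepsilon)-t+1$, i.e. absorb one more unit, which happens precisely because one of the "run boundaries" is the left end. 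In other words, I expect the correct accounting to make the ambient size $n+z(\varepsilon)$ and recover $\binom{n+z(\varepsilon)}{t}$ exactly. The main obstacle is this off-by-one run-boundary bookkeeping; a safe alternative I would keep in reserve is a clean inductive proof on $t$, peeling off $q_1$ and splitting on whether $\varepsilon_1=\varepsilon_2$, which makes the recursion $\binom{n+z}{t} = \binom{n+z'}{t-1}$-type identities transparent and sidesteps the global shift.
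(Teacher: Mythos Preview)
Your compression-and-shift strategy is sound and does lead to the result, but the proposal as written contains a concrete error and, as you yourself note, the bookkeeping is not finished. With your definition $r_i=q_i/2$ for $\varepsilon_i=+$ and $r_i=\lceil q_i/2\rceil=(q_i+1)/2$ for $\varepsilon_i=-$, the weak inequality occurs at the \emph{opposite} transition from what you wrote: if $q_i$ is even and $q_{i+1}$ is odd with $q_i<q_{i+1}$, then $r_{i+1}=(q_{i+1}+1)/2\ge q_i/2+1=r_i+1$, which is strict; whereas if $q_i$ is odd and $q_{i+1}$ is even, then $r_{i+1}=q_{i+1}/2\ge (q_i+1)/2=r_i$, which is merely weak (and equality occurs, e.g.\ $q_i=1$, $q_{i+1}=2$). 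So the weak slots are the $-$-to-$+$ transitions, of which there are $z(\varepsilon)-[\varepsilon_1=+]$. Combined with the range $r_1\ge[\varepsilon_1=-]$ and $r_t\le n$, the standard shift then gives a strictly increasing sequence in a set of size $n+(z-[\varepsilon_1=+])+[\varepsilon_1=+]=n+z(\varepsilon)$, and the count $\binom{n+z(\varepsilon)}{t}$ falls out with no residual off-by-one. Your computation of $\binom{n+z+1}{t}$ came from mis-identifying which transitions are weak.

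The paper's proof takes a different and somewhat cleaner route: rather than halving and tracking weak versus strict slots, it keeps the $q_i$ intact and shifts each one by its \emph{block number} (the index of the maximal parity run it sits in), which converts every $q_i$ to an odd integer and lands the sequence bijectively among the $t$-subsets of the $n+z(\varepsilon)$ odd numbers in $(0,2(n+z))$. This avoids the weak/strict case analysis entirely, and the boundary bookkeeping (whether the first run is even or odd) is absorbed into whether the block numbering starts at $0$ or $1$. Your approach and the paper's are close cousins, but the paper's bijection is more direct for exactly the reason you anticipated: the run-boundary accounting is where your argument gets delicate, and the paper's shift handles it uniformly.
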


\begin{proof}
We bijectively associate to every sequence $0 \leq q_1 < \dots < q_t \leq 2n$ of given parity $\varepsilon_1,\dots,\varepsilon_t$, a sequence of odd numbers $0 < q_1' < \dots < q_t' < 2(n+z)$, where $z=z(\varepsilon_1,\dots,\varepsilon_t)$. There are $n+z$ odd numbers in this range, and so the lemma follows.

Note that each run of pluses in $\varepsilon$ comes from a block of even numbers in $(q_1,\ldots,q_t)$, and minus runs come from odd blocks. To define the bijection, increase each element of the first block of even numbers by one. Then increase the elements of the following block, of odd numbers, by two, and so on. If the first block is odd, then its elements stay put. Explicitly, if $q_i$ is in block number $j$, then $q_i'=q_i+j$ where the numbering $j$ starts from $0$ or $1$ according to whether $q_1$ is odd or even.

By construction, all $q_i'$ are odd and vary between $0$ and $2(n+z)$. Indeed, numbers in the last block are increased by either $2z$ or $2z-1$, and $q_t$ may reach up to $2n-1$ or $2n$, according to whether it is odd or even.
\end{proof}

By our construction of $\mathcal{Q}$, equalities between elements of a quadruple $Q_i \in \mathcal{Q}$ impose restrictions on the parities in $Q_i$. Therefore, a pattern $\bold{T} \in \mathcal{T}_k$ may be incompatible with some parity vectors $\varepsilon$, in which case $\mathcal{Q}^k(\bold{T},\varepsilon)$ is empty. The following factor filters out such incompatible combinations.
$$ f(T_i,\varepsilon) =
\begin{cases}
1 & t_{i1} < t_{i2} < t_{i3} < t_{i4} \\
1 & t_{i1} = t_{i2} \text{ and } \varepsilon(t_{i1}) = \varepsilon(t_{i2}) = \varepsilon(t_{i3}) \neq \varepsilon(t_{i4}) \\
1 & t_{i2} = t_{i3} \text{ and } \varepsilon(t_{i1}) = \varepsilon(t_{i2}) = \varepsilon(t_{i3}) = \varepsilon(t_{i4}) \\
1 & t_{i3} = t_{i4} \text{ and } \varepsilon(t_{i1}) \neq \varepsilon(t_{i2}) = \varepsilon(t_{i3}) = \varepsilon(t_{i4}) \\
0 & \text{else} \end{cases} \;\;. $$
The count $\left|\mathcal{Q}^k(\bold{T},\varepsilon)\right|$ of compatible $\bold{Q}$ is obtained by combining Lemma~\ref{runs} with $f$:
$$ \left|\mathcal{Q}^k(\bold{T},\varepsilon)\right| \;=\; \binom{n + z(\varepsilon)}{t} \prod\limits_{i=1}^k f(T_i,\varepsilon) \;.$$
The expectation now becomes
$$ E\left[c_2^k\right] \;=\; \sum\limits_{\bold{T} \in \mathcal{T}_k} \sum\limits_{\varepsilon \in \{\pm\}^t} \sum\limits_{\sigma \in S_t} \frac{(-1)^k}{t!} \tbinom{n + z(\varepsilon)}{t} \prod\limits_{i=1}^k f(T_i,\varepsilon) \cdot \varepsilon(t_{i1})\varepsilon(t_{i2})\varepsilon(t_{i3})\varepsilon(t_{i4}) \cdot \mathcal{I}\left[\substack{\sigma(t_{i1}) < \sigma(t_{i3}) \\ \sigma(t_{i4}) < \sigma(t_{i2})}\right] .$$
This is a polynomial in $n$ of degree at most $4k$, since $t \leq 4k$. We will reduce it to $2k$.

\item \textbf{Exchange between Patterns}

It turns out that the factors $f(T_i,\varepsilon)$ can be replaced with simpler ones,
$$ F(T_i) \;=\; \begin{cases} 1 & t_{i1} < t_{i2} < t_{i3} < t_{i4} \\ \frac12 & \text{else, i.e., if there is one equality} \end{cases} \;\;\;.$$
This involves transfer of mass between terms in the sum. The key step is the following calculation.

\begin{lemma}\label{masstransfer}
Fix $t\in\N$, $\sigma \in S_t$ and $\varepsilon \in \{\pm\}^t$. For $1 \leq x<y<z \leq t$ consider the set
$$ \mathcal{U} = \{(x,x,y,z),(x,y,y,z),(x,y,z,z)\} $$
Then
\begin{align*}
\sum\limits_{S \in \mathcal{U}} f(S,\varepsilon) \cdot &\varepsilon(s_{1})\varepsilon(s_{2})\varepsilon(s_{3})\varepsilon(s_{4})
\cdot \mathcal{I}\left[\substack{\sigma(s_{1}) < \sigma(s_{3}) \\ \sigma(s_{4}) < \sigma(s_{2})}\right] \\ \;=\;&
\sum\limits_{S \in \mathcal{U}} F(S) \cdot \varepsilon(s_{1})\varepsilon(s_{2})\varepsilon(s_{3})\varepsilon(s_{4})
\cdot \mathcal{I}\left[\substack{\sigma(s_{1}) < \sigma(s_{3}) \\ \sigma(s_{4}) < \sigma(s_{2})}\right]
\end{align*}
\end{lemma}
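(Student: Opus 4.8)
The plan is to verify the identity by an explicit case analysis on the parity pattern $\varepsilon$ restricted to the three relevant coordinates $x<y<z$, comparing the left-hand side (the $f$-weighted sum) with the right-hand side (the $F$-weighted sum) term by term in $\mathcal{U}$. First I would observe that in both sums the triple $S$ ranges over exactly the same three quadruples $(x,x,y,z),(x,y,y,z),(x,y,z,z)$, and for each of them the product $\varepsilon(s_1)\varepsilon(s_2)\varepsilon(s_3)\varepsilon(s_4)$ and the indicator $\mathcal{I}[\sigma(s_1)<\sigma(s_3),\,\sigma(s_4)<\sigma(s_2)]$ are fixed numbers once $\sigma$ and $\varepsilon$ are fixed; the only thing that changes between the two sides is the scalar weight ($f$ versus $F$). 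So the claim reduces to showing that the $\{-1,0,1\}$-valued ``content'' of each quadruple, multiplied by the appropriate weight, sums to the same total.

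Next I would name the three contents. For $S=(x,x,y,z)$ the product is $\varepsilon(x)^2\varepsilon(y)\varepsilon(z)=\varepsilon(y)\varepsilon(z)$ and the indicator is $\mathcal{I}[\sigma(x)<\sigma(y),\,\sigma(z)<\sigma(x)]$, i.e. $\mathcal{I}[\sigma(z)<\sigma(x)<\sigma(y)]$; for $S=(x,y,y,z)$ the product is $\varepsilon(x)\varepsilon(y)^2\varepsilon(z)=\varepsilon(x)\varepsilon(z)$ and the indicator is $\mathcal{I}[\sigma(x)<\sigma(y),\,\sigma(z)<\sigma(y)]=\mathcal{I}[\sigma(x),\sigma(z)<\sigma(y)]$; for $S=(x,y,z,z)$ the product is $\varepsilon(x)\varepsilon(y)$ and the indicator is $\mathcal{I}[\sigma(x)<\sigma(z),\,\sigma(z)<\sigma(y)]=\mathcal{I}[\sigma(x)<\sigma(z)<\sigma(y)]$. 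On the right-hand side each of these gets weight $F=\tfrac12$. On the left-hand side the weight $f(S,\varepsilon)$ is $1$ or $0$ according to the parity constraints recorded in the definition of $f$: $f((x,x,y,z),\varepsilon)=\mathcal{I}[\varepsilon(x)=\varepsilon(y)\neq\varepsilon(z)]$, $f((x,y,y,z),\varepsilon)=\mathcal{I}[\varepsilon(x)=\varepsilon(y)=\varepsilon(z)]$, and $f((x,y,z,z),\varepsilon)=\mathcal{I}[\varepsilon(x)\neq\varepsilon(y)=\varepsilon(z)]$. Thus the left side picks out at most the subset of $\mathcal{U}$ allowed by $\varepsilon$, each with full weight, while the right side takes all of $\mathcal{U}$ with half weight.

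The computation then splits by the pattern $(\varepsilon(x),\varepsilon(y),\varepsilon(z))$ into the relevant parity classes. When $\varepsilon(x)=\varepsilon(y)=\varepsilon(z)$, only the middle quadruple survives on the left; one checks the sign prefactor simplifies and matches $\tfrac12$ times the sum of all three contents, using that the three indicators $\mathcal{I}[\sigma(z)<\sigma(x)<\sigma(y)]$, $\mathcal{I}[\sigma(x),\sigma(z)<\sigma(y)]$, $\mathcal{I}[\sigma(x)<\sigma(z)<\sigma(y)]$ satisfy the combinatorial relation that the first and third sum to the second (the event $\{\sigma(x),\sigma(z)<\sigma(y)\}$ is the disjoint union of $\{\sigma(z)<\sigma(x)<\sigma(y)\}$ and $\{\sigma(x)<\sigma(z)<\sigma(y)\}$), and that here all four $\varepsilon$-values are equal so every content equals the same $\pm1$. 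The cases $\varepsilon(x)=\varepsilon(y)\neq\varepsilon(z)$ and $\varepsilon(x)\neq\varepsilon(y)=\varepsilon(z)$ are handled the same way, each time exactly one quadruple survives on the left and one shows its content equals half the signed sum of all three contents, again via the same disjoint-union identity among the indicators together with the bookkeeping of which $\varepsilon$-products coincide. The remaining parity class $\varepsilon(x)\neq\varepsilon(y)$, $\varepsilon(y)\neq\varepsilon(z)$ forces $\varepsilon(x)=\varepsilon(z)\neq\varepsilon(y)$; here the left side is empty, and on the right side the three contents are respectively $\varepsilon(y)\varepsilon(z)$, $\varepsilon(x)\varepsilon(z)=+1$, $\varepsilon(x)\varepsilon(y)$ against indicators whose signed half-sum one checks cancels to zero. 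I expect the main obstacle to be purely organizational: keeping the four parity classes, the three quadruples, and the three indicator events straight so that every subcase genuinely reduces to the single identity ``$\mathcal{I}[\sigma(z)<\sigma(x)<\sigma(y)]+\mathcal{I}[\sigma(x)<\sigma(z)<\sigma(y)]=\mathcal{I}[\sigma(x)<\sigma(y)]\cdot\mathcal{I}[\sigma(z)<\sigma(y)]$'' (plus the symmetric one obtained by swapping the roles of the $<$'s), rather than any analytic difficulty.
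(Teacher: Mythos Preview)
Your proposal is correct and is essentially the same direct case verification as the paper's proof. The only cosmetic difference is the axis along which you organize the check: you split into the four parity classes of $(\varepsilon(x),\varepsilon(y),\varepsilon(z))$ and reduce each to the indicator identity $\mathcal{I}[\sigma(z)<\sigma(x)<\sigma(y)]+\mathcal{I}[\sigma(x)<\sigma(z)<\sigma(y)]=\mathcal{I}[\sigma(x),\sigma(z)<\sigma(y)]$, whereas the paper writes out both sides, passes through an intermediate form split by the relative order of $\sigma(x),\sigma(y),\sigma(z)$ (obtaining expressions like $(\varepsilon(x)+\varepsilon(y))\varepsilon(z)/2$), and then matches it to the right-hand side; the same indicator identity is doing the work in both arguments.
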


\begin{proof}
We show that the two sides agree
\begin{eqnarray*}
\text{left hand side}
&=&\phantom{+\;}\mathcal{I}\left[\varepsilon(x) = \varepsilon(y) \neq \varepsilon(z)\right] \cdot \varepsilon(y)\varepsilon(z)
\cdot \mathcal{I}\left[\sigma(z) < \sigma(x) < \sigma(y)\right] \\
&&+\;\mathcal{I}\left[\varepsilon(x) = \varepsilon(y) = \varepsilon(z)\right] \cdot \varepsilon(x)\varepsilon(z)
\cdot \mathcal{I}\left[\sigma(x) < \sigma(y),\; \sigma(z) < \sigma(y)\right] \\
&&+\;\mathcal{I}\left[\varepsilon(x) \neq \varepsilon(y) = \varepsilon(z)\right] \cdot \varepsilon(x)\varepsilon(y)
\cdot \mathcal{I}\left[\sigma(x) < \sigma(z) < \sigma(y)\right] \\
&=& \begin{cases}
(\varepsilon(x) + \varepsilon(y))/2 \cdot \varepsilon(z)&\;\;\;\;\;\;\;\text{if }\sigma(z) < \sigma(x) < \sigma(y) \\
(\varepsilon(y) + \varepsilon(z))/2 \cdot \varepsilon(x)&\;\;\;\;\;\;\;\text{if }\sigma(x) < \sigma(z) < \sigma(y) \\
0 & \;\;\;\;\;\;\;\text{otherwise}
\end{cases} \\
&=&\phantom{+\;}\varepsilon(y)\varepsilon(z)/2 \cdot \mathcal{I}\left[\sigma(z) < \sigma(x) < \sigma(y)\right] \\
&&+\;\varepsilon(x)\varepsilon(z)/2 \cdot \mathcal{I}\left[\sigma(x) < \sigma(y),\; \sigma(z) < \sigma(y)\right] \\
&&+\;\varepsilon(x)\varepsilon(y)/2 \cdot \mathcal{I}\left[\sigma(x) < \sigma(z) < \sigma(y)\right] \\
&=& \text{right hand side} .
\end{eqnarray*}
This is easily verified for any fixed $\sigma$ and $\varepsilon$, by considering various cases of $\varepsilon(x),\varepsilon(y),\varepsilon(z)$ and the ordering of $\sigma(x),\sigma(y),\sigma(z)$.
\end{proof}

We now apply Lemma~\ref{masstransfer} on the expression for $E\left[c_2^k\right]$. Let $\bold{T} = (T_1,\dots,T_k) \in \mathcal{T}_k$. If $T_1$ consists of four distinct numbers then $f(T_1,\varepsilon) = 1 = F(T_1)$. Otherwise, $\bold{T}$ is one of three possible patterns in $\mathcal{T}_k$, that agree on $T_2,\dots,T_k$, and for which $T_1$ is one of $\{(x,x,y,z)$, $(x,y,y,z)$, $(x,y,z,z)\}$ for some $x<y<z$. We thus apply the lemma and replace $f(T_1,\varepsilon)$ with $F(T_1)$ in the terms corresponding to these patterns. Note that for every fixed $\sigma$ and $\varepsilon$, the same multiplicative factor comes from $T_2,\dots,T_k$.

We thus apply the lemma for all such triples of patterns, and turn every $f(T_1,\varepsilon)$ into $F(T_1)$. We repeat for $T_2,\dots,T_k$, and so every factor $f(T_i,\varepsilon)$ is replaced by $F(T_i)$.
$$ E\left[c_2^k\right] \;=\; \sum\limits_{\bold{T} \in \mathcal{T}_k} \sum\limits_{\varepsilon \in \{\pm\}^t} \sum\limits_{\sigma \in S_t} \frac{(-1)^k}{t!} \tbinom{n + z(\varepsilon)}{t} \prod\limits_{i=1}^k F(T_i) \cdot \varepsilon(t_{i1})\varepsilon(t_{i2})\varepsilon(t_{i3})\varepsilon(t_{i4}) \cdot \mathcal{I}\left[\substack{\sigma(t_{i1}) < \sigma(t_{i3}) \\ \sigma(t_{i4}) < \sigma(t_{i2})}\right] .$$
We next define two functions of $\bold{T} = (T_1,\dots,T_k) \in \mathcal{T}_k$, that simplify our notation. Denote by $\Delta = \Delta(\bold{T}) \in \{0,\dots,k\}$ the number of quadruples $T_i$ with some equality in $t_{i1} \leq t_{i2} \leq t_{i3} \leq t_{i4}$, i.e., those with only three distinct elements. It is immediate from the definitions that
$$\frac{1}{2^{\Delta(\bold{T})}} \;=\; \prod\limits_{i=1}^k F(T_i) \;.$$
Let $P(\bold{T})$ denote the probability that a random permutation $\sigma \in S_t$ satisfies all the inequalities imposed by~$\bold{T}$:
$$ P(\bold{T}) \;=\; \frac{1}{t!} \sum\limits_{\sigma \in S_t} \;\prod\limits_{i=1}^k
\;\mathcal{I}\left[\substack{\sigma(t_{i1}) < \sigma(t_{i3}) \\ \sigma(t_{i4}) < \sigma(t_{i2})}\right] \;.$$
The expectation is now simplified to
$$ E\left[c_2^k\right] \;=\;
(-1)^k \sum\limits_{\bold{T} \in \mathcal{T}_k} \frac{P(\bold{T})}{2^{\Delta(\bold{T})}}
\sum\limits_{\varepsilon \in \{\pm\}^t} \binom{n + z(\varepsilon)}{t} \;\prod\limits_{i=1}^k
\varepsilon(t_{i1})\varepsilon(t_{i2})\varepsilon(t_{i3})\varepsilon(t_{i4}) \;.$$
Thanks to the inter-pattern exchanges, each pattern now separately contributes a polynomial in $n$ of degree $\leq 2k$, as we show next.

\item \textbf{Cancellations}

As in the case of the linking number, the sum over $\varepsilon$ vanishes under certain conditions. But we first state two more formulas:
\begin{enumerate}
\item\label{move-a}
Recall from the proof of Lemma~\ref{lnlemma} the orthogonal family of functions $\chi(I)(\varepsilon_1,\dots,\varepsilon_t) = \prod_{j \in I}\varepsilon_j$ where $I \subseteq \{1,\dots,t\}$. For $\bold{T} \in \mathcal{T}_k$, define $J = J(\bold{T})$ to be the set of numbers in $\{1,\dots,t\}$ that appear in $\bold{T}$ an odd number of times. Then
$$ \prod\limits_{i=1}^k \varepsilon(t_{i1})\varepsilon(t_{i2})\varepsilon(t_{i3})\varepsilon(t_{i4}) \;=\; \chi(J(\bold{T}))(\varepsilon) \;.$$
Note that $|J| \geq 2t-4k$, where in case of equality each element of $\{1,\dots,t\}$ appears in the sequence $\{t_{ij}\}$ either once or twice.
\item\label{move-b}
The following is a standard identity.
$$ \binom{n+z}{t} \;=\; \sum\limits_{r=0}^{t} \binom{n}{t-r} \binom{z}{r} \;\;.$$
On the left is the number of size $t$ subsets of $[n+z]$, and on the right the count is split according to the number of elements in the first $n$ positions.
\end{enumerate}
By~(a) and~(b),
$$ E\left[c_2^k\right] \;=\;
(-1)^k \sum\limits_{\bold{T} \in \mathcal{T}_k} \frac{P(\bold{T})}{2^{\Delta(\bold{T})}}
\;\sum\limits_{r=0}^{t} \;\binom{n}{t-r}
\;\sum\limits_{\varepsilon \in \{\pm\}^t} \binom{z(\varepsilon)}{r} \cdot \chi(J(\bold{T}))(\varepsilon) $$
Note also that
$$ z(\varepsilon_1,\dots,\varepsilon_t) \;=\; \frac{(t+1) + \varepsilon_1 - \varepsilon_1\varepsilon_2 - \varepsilon_2\varepsilon_3 - \dots - \varepsilon_{t-1}\varepsilon_t + \varepsilon_t}{4} \;\;.$$
The sum over $\varepsilon \in \{\pm 1\}^t$ can be viewed as the inner product $\left\langle \tbinom{z}{r},\chi(J)\right\rangle$ of two functions in $\mathcal{W}_t$, the linear space of functions $\{\pm 1\}^t \to \nobreak \R$. The terms in the sum over $\bold{T}$ and $r$ divide into four cases.
\begin{itemize}
\item
If $r > t-2k$ then $\tbinom{n}{t-r}$ is a polynomial in $n$ of degree $ < 2k$.
\item
If $r < t-2k$ then $\left\langle \tbinom{z}{r},\chi(J)\right\rangle = 0$. Indeed, by the above $z(\varepsilon)$ formula, $\tbinom{z(\varepsilon)}{r}$~is spanned by $\left\{\chi(I) : |I| \leq 2r\right\}$, while $|J| \geq 2t-4k > 2r$.
\item
If $r = t-2k$ and $|J| > 2t-4k$ then similarly the functions are orthogonal.
\item
If $r = t-2k$ and $|J| = 2t-4k$ then the term equals $\tbinom{n}{2k} \cdot \left\langle \tbinom{z}{r},\chi(J)\right\rangle$.
\end{itemize}
This shows that $E\left[c_2^k\right]$ has degree at most $2k$ as a polynomial in $n$, proving Theorem~\ref{moments2}.\qed 
\end{enumerate}

We continue the argument and evaluate the inner products in the last case of the four, thus deriving this polynomial's leading term. Let $|J| = 2t-4k = 2r$. We first consider the projection of $\tbinom{z}{r} \in \mathcal{W}_t$ to the subspace $\text{\emph{span}}\left\{\chi(I):|I| = 2r \right\}$, that contains $\chi(J)$. Observe that $\tbinom{z}{r}$ has the same projection as the function
$$ z_r(\varepsilon) \;:=\; \frac{(\varepsilon_1\varepsilon_2+\varepsilon_2\varepsilon_3+\dots+\varepsilon_{t-1}\varepsilon_t)^r}{(-4)^r \cdot r!} \;.$$
This function has a non-zero inner product with $\chi(J)$ for $|J|=2r$, if and only if $J$ is the disjoint union of $r$ pairs of consecutive numbers. In this case we conclude
$$ \sum\limits_{\varepsilon \in \{\pm\}^t} \binom{z(\varepsilon)}{r} \chi(J(\bold{T}))(\varepsilon) \;=\; \left\langle z_r(\varepsilon),\chi(J)\right\rangle\;=\; 2^t \cdot \frac{ r!}{(-4)^{r} r!} \;=\; (-1)^t 2^{4k-t} \;.$$
where we used $\left\langle \chi(J),\chi(J')\right\rangle = 2^t \delta_{J,J'}$.

Recall that if $|J(\bold{T})| = 2t-4k$ then every number appears in $\bold{T}$ at most twice, which means that $J(\bold{T})$ consists of those that appear once. Therefore the patterns that contribute $\tbinom{n}{2k}$ to the $k$th moment are captured by the following definition. Call a pattern $\bold{T} \in \mathcal{T}_k$ a \emph{principal} pattern if
\begin{enumerate}
\item
Every element of $\{1,\dots,t(\bold{T})\}$ appears in the sequence $\{t_{ij}\}$ at most twice.
\item
The set of elements that appear only once is a disjoint union of pairs of consecutive numbers. 
\end{enumerate}
For example, the pattern $\bold{T} = ((1,1,4,6),(2,3,4,5))$ is principal with $J(\bold{T}) = \{2,3,5,6\}$, while $((1,2,2,6),(3,4,5,5))$ is not principal, as $\{1,3,4,6\}$ is not a disjoint union of pairs of consecutive numbers.

Denoting the set of principal patterns by $\mathcal{T}_k^*$, we finally write
$$ E\left[c_2^k\right] \;=\; (-1)^k \binom{n}{2k} \;\sum\limits_{\bold{T} \in \mathcal{T}_k^*} (-1)^t \cdot 2^{4k-t-\Delta(\bold{T})} \cdot P(\bold{T}) \;+\; R(n) $$
where $R(n)$ is a polynomial of degree at most $2k-1$.

\subsection*{Formula for the limiting \emph{k}th moment}

Next, we simplify the formula for the limiting normalized moments. Denote by $\lambda_k$ the coefficient of $n^{2k}$ in the expected $c_2^k$.
$$ \lambda_k \;:=\; \lim_{n\to\infty} E\left[\frac{c_2(K_{2n+1})^k}{n^{2k}}\right] \;=\; \frac{(-1)^k}{(2k)!}\;\sum\limits_{\bold{T} \in \mathcal{T}_k^*} (-1)^t \cdot 2^{4k-t-\Delta(\bold{T})} \cdot P(\bold{T}) \;.$$
We say that $\bold{T}'$ is a \emph{refinement} of $\bold{T}$, writing $\bold{T}' \prec \bold{T}$, if
$$ t_{ij} < t_{lm} \;\Rightarrow\; t_{ij}' < t_{lm}' $$
This allows for $t_{ij}' < t_{lm}'$ while $t_{ij} = t_{lm}$. For example, $\bold{T}' = ((1,1,5,7),(2,3,4,6))$ refines $\bold{T} = ((1,1,4,6),(2,3,4,5))$. In other words, $\bold{T}$ is obtained from $\bold{T}'$ by merging consecutive numbers. Note that any refinement of a principal pattern is principal.

Denote by $\mathcal{T}_k^1 \subseteq \mathcal{T}_k^*$ the set of patterns that contain every element in $\{1,\dots,4k\}$ exactly once. Let $\bold{T}' \in \mathcal{T}_k^1$. Denote by $\overline{\bold{T}'}$ the pattern obtained from $\bold{T}'$ by merging each of the $2k$ pairs $(1,2),(3,4),\dots,(4k-1,4k)$ into one number $1,2,\dots,2k$, so that each of these $2k$ numbers appears twice in $\overline{\bold{T}'}$. For example if $\bold{T}' = ((1,3,4,5),(2,6,7,8))$ then $\overline{\bold{T}'} = ((1,2,2,3),(1,3,4,4))$.

For $\bold{T}' \in \mathcal{T}_k^1$, we define a real function $\psi$, as a sum over all $2^{2k}$ ways to refine $\overline{\bold{T}'}$:
$$ \psi(\bold{T}') \;:=\; \sum\limits_{\bold{T}' \prec \bold{T} \prec \overline{\bold{T}'}} (-1)^{t(\bold{T})}P(\bold{T}) \;\;.$$
The following lemma rewrites $\lambda_k$ as a sum of $\psi(\bold{T}')$'s.

\begin{lemma}\label{psi-lemma}
$$ \lambda_k \;=\; \frac{(-1)^k}{(2k)!} \;\sum\limits_{\bold{T}' \in \mathcal{T}_k^1} \psi(\bold{T}')\;.$$
\end{lemma}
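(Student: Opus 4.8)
The plan is to start from the formula for $\lambda_k$ just obtained,
$$ \lambda_k \;=\; \frac{(-1)^k}{(2k)!}\sum_{\bold{T}\in\mathcal{T}_k^*}(-1)^{t(\bold{T})}\,2^{\,4k-t(\bold{T})-\Delta(\bold{T})}\,P(\bold{T}), $$
and to show that the right-hand side of the lemma expands to exactly this. Substituting the definition of $\psi$ and interchanging the order of summation gives
$$ \sum_{\bold{T}'\in\mathcal{T}_k^1}\psi(\bold{T}')\;=\;\sum_{\bold{T}}(-1)^{t(\bold{T})}P(\bold{T})\cdot m(\bold{T}),\qquad m(\bold{T}):=\#\bigl\{\bold{T}'\in\mathcal{T}_k^1 \;:\; \bold{T}'\prec\bold{T}\prec\overline{\bold{T}'}\bigr\}, $$
the outer sum ranging over all quadruple-sequences $\bold{T}$ that occur in at least one $\psi(\bold{T}')$. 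So the lemma reduces to the combinatorial identity $m(\bold{T})=2^{\,4k-t(\bold{T})-\Delta(\bold{T})}$ for $\bold{T}\in\mathcal{T}_k^*$, together with the assertion that the $\bold{T}$ with $m(\bold{T})\neq 0$ and $P(\bold{T})\neq 0$ are precisely the members of $\mathcal{T}_k^*$.

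For the latter assertion I would note that any $\bold{T}$ with $\bold{T}'\prec\bold{T}\prec\overline{\bold{T}'}$ is obtained from $\bold{T}'$ by merging a subset of the $2k$ consecutive pairs $(1,2),(3,4),\dots$; its doubled values are exactly the merged pairs, so its once-occurring values are the non-merged pairs and hence form a disjoint union of pairs of consecutive numbers. Thus $\bold{T}$ is structurally principal, and one checks that it is realizable — the parity constraints its equalities impose through the definition of $\mathcal{Q}$ are mutually consistent — so $\bold{T}\in\mathcal{T}_k^*$; degenerate merges that create a quadruple with fewer than three distinct entries force $P(\bold{T})=0$ and can be dropped. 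Conversely, given $\bold{T}\in\mathcal{T}_k^*$, split each of its $4k-t(\bold{T})$ doubled values into two consecutive values and renumber: since the singletons of $\bold{T}$ already come in consecutive pairs, the resulting $4k$ values get partitioned into consecutive pairs, which forces that partition to be $(1,2),(3,4),\dots$, so the result lies in $\mathcal{T}_k^1$. Counting $m(\bold{T})$ is therefore counting the ways to perform these splits.

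For the count: of the $4k-t(\bold{T})$ doubled values of $\bold{T}$, exactly $\Delta(\bold{T})$ have both occurrences inside one quadruple — they are precisely the repeated entries of the $\Delta(\bold{T})$ quadruples carrying an equality — and for such a value the two values produced by the split are put into increasing order by the sortedness of that quadruple, so there is no freedom. Each of the remaining $4k-t(\bold{T})-\Delta(\bold{T})$ doubled values has its two occurrences in two distinct quadruples, and exactly two choices decide which occurrence gets the smaller of the two split values; both choices produce a genuine element of $\mathcal{T}_k^1$, since every quadruple of the split pattern then has four distinct entries and is automatically realizable. Hence $m(\bold{T})=2^{\,4k-t(\bold{T})-\Delta(\bold{T})}$, and feeding this into the interchanged sum yields $\sum_{\bold{T}'\in\mathcal{T}_k^1}\psi(\bold{T}')=\sum_{\bold{T}\in\mathcal{T}_k^*}(-1)^{t(\bold{T})}2^{\,4k-t(\bold{T})-\Delta(\bold{T})}P(\bold{T})=(2k)!\,(-1)^k\,\lambda_k$, which is the lemma.

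I expect the bookkeeping of the last paragraph to be the main obstacle: pinning down that the number of intra-quadruple doubled values is exactly $\Delta(\bold{T})$, and checking that the consecutive-pair partition of the $4k$ split values is forced to be the standard one and that each split genuinely lands in $\mathcal{T}_k^1$ — in particular the realizability point, which relies on the precise shape of the parity conditions in the definition of $\mathcal{Q}$.
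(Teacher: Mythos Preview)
Your argument is correct and follows the same route as the paper: both interchange the order of summation, count the refinements of a given $\bold{T}\in\mathcal{T}_k^*$ to $\mathcal{T}_k^1$ as $2^{4k-t(\bold{T})-\Delta(\bold{T})}$ by the two-orderings-per-split reasoning (with the $\Delta(\bold{T})$ intra-quadruple doublings forced), note that $\bold{T}'\prec\bold{T}\in\mathcal{T}_k^*$ forces $\bold{T}\prec\overline{\bold{T}'}$, and dispose of over-merged quadruples via $P(\bold{T})=0$.

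One remark on your realizability worry: after the mass-transfer step the parity filters $f(T_i,\varepsilon)$ have been replaced by the parity-free $F(T_i)$, so from that point $\mathcal{T}_k^*$ is to be read purely structurally. No separate realizability check is needed --- and in fact the global parity constraints from $\mathcal{Q}$ can be inconsistent for some structurally principal $\bold{T}$ (e.g.\ $((1,2,2,3),(3,4,4,5),(1,5,6,6))$), so the check as you phrased it would not go through; fortunately it is also not required.
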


\begin{proof}
This is a change of order of summation, where both sides are equal to a double sum over~$\bold{T}$ and~$\bold{T}'$.

In the definition of $\lambda_k$, each pattern $\bold{T} \in \mathcal{T}_k^*$ can be refined to $\bold{T}' \in \mathcal{T}_k^1$ in $2^{4k-t-\Delta(\bold{T})}$ ways. Indeed, each of the $4k-t$ elements that appear twice in $\bold{T}$ can be replaced by two suitable consecutive numbers in two orders, except for those $\Delta(\bold{T})$ numbers with both occurrences in the same quadruple. For them there is only one possible ordering, the one that keeps the quadruple increasing.

Moreover, if $\bold{T}' \prec \bold{T} \in \mathcal{T}_k^*$ then $\bold{T} \prec \overline{\bold{T}'}$. Indeed, being a principal pattern, $\bold{T}$ can be obtained from its refinement $\bold{T}'$ by merging pairs of numbers that are consecutive in $\{1,\dots,4k\}$, leaving the remaining ones in runs of even length. It follows that each $\bold{T} \in \mathcal{T}_k^*$ is determined by which ones of the $2k$ pairs $(1,2),(3,4),\dots$ are merged and which remain distinct.

Not necessarily all these $2^{2k}$ options give actual patterns $\bold{T} \in \mathcal{T}_k^*$ that appear in the original sum. Such a pattern consists of quadruples $t_{i1} \leq t_{i2} \leq t_{i3} \leq t_{i4}$ with at most one equality, but by merging pairs it might happen that $t_{i1} = t_{i2} < t_{i3} = t_{i4}$, in which case $\bold{T} \not\in \mathcal{T}_k$. Such terms do not contribute to $\psi(\bold{T}')$ because the definition of $P(\bold{T})$ contains the conditions $\sigma(t_{i1}) < \sigma(t_{i3})$ and $\sigma(t_{i2}) > \sigma(t_{i4})$ which imply $P(\bold{T})=0$.
\end{proof}

Our next goal is to derive a formula for $\psi(\bold{T}')$ for $\bold{T}' \in \mathcal{T}_k^1$. We translate the problem from the language of patterns to the realm of directed graphs.
\begin{itemize}
\item 
Given a pattern $\bold{T}$, define a directed graph $G(\bold{T})$, with $t$ vertices labeled $1,\dots,t$ and $2k$ edges $t_{i3} \to t_{i1}$ and $t_{i2} \to t_{i4}$. For $\bold{T}' \in \mathcal{T}_k^1$, $G(\bold{T}')$ is a disjoint union of $2k$ edges, while $G(\overline{\bold{T}'})$ has degree $2$ vertices and so is a disjoint union of cycles, whose edges are individually oriented.
\item 
The \emph{breaking} of a graph at a vertex $v$ is the operation of replacing $v$ by two or more disjoint vertices, each of which gets some of $v$'s edges. Recall that the summation in $\psi(\bold{T}')$ is over $2^{2k}$ patterns $\bold{T}$ such that $\bold{T}' \prec \bold{T} \prec \overline{\bold{T}'}$. Translating refinement into terms of directed graphs, it means that $G(\bold{T})$ is obtained from $G(\overline{\bold{T}'})$ by breaking it at a subset of its vertices. Note that each of the $2k$ vertices in the original union of cycles may be broken into two vertices of total degree $1$, so that cycles break into unions of paths.
\item 
Let $G$ be a directed graph on $t$ vertices. For convenience, we abuse notation and let $G$ also refer to the event that a permutation $\sigma \in S_t$ that assigns values to its vertices, respects the orientations of its edges, i.e., 
$$ u \;\bullet\!\!\!\rightarrow\!\!\!-\!\!\!\bullet v \;\;\Rightarrow\;\; \sigma(u) > \sigma(v)\;.$$
Then $P(G)$ denotes the probability of this event where $\sigma \in S_t$ is picked uniformly at random. From the definitions $P(G(\bold{T})) = P(\bold{T})$. 
\item 
It follows that instead of summing over all $2^{2k}$ patterns between $\bold{T}'$ and $\overline{\bold{T}'}$, we may sum over all $2^{2k}$ breakings of $G(\overline{\bold{T}'})$. For a $2$-regular directed graph $G$ let
$$ \psi(G) \;:=\; \sum\limits_{H \in \mathcal{B}(G)}(-1)^{|H|}P(H)$$
where $\mathcal{B}(G)$ are all its $2^{|G|}$ breakings, and $|H|$ stands for the number of vertices in $H$. Clearly, $\psi(\bold{T}') = \psi(G)$ where $G = G(\overline{\bold{T}'})$.
\end{itemize}

\begin{lemma}\label{cycle}
Let $G$ be a $2k$-vertex $2$-regular directed graph, and denote the cycles in~$G$ by $C_1$, $\dots$, $C_m$, with $l_1,\dots,l_m$ vertices respectively, with each $l_i \geq 2$. Then
$$ \psi(G) \;=\; (-1)^k \; \prod_{j=1}^m \; \mathrm{sign}(C_j) \cdot \beta_{l_j} $$
where
\begin{itemize}
\item The sign $\pm 1$ of a cycle of even length is the parity of the number of forward edges encountered going once around the cycle. The sign of an odd cycle is $0$.
\item $\beta_l \;=\; (-1)^{l/2+1}B_l/l!$ for $l \geq 2$, where $B_l$ is the $l$th Bernoulli number~\cite[p. 1040]{jeffrey2007table}.
\end{itemize}
\end{lemma}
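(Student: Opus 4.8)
\textbf{Proof plan for Lemma~\ref{cycle}.}
The plan is to reduce the computation of $\psi(G)$ to a product over the connected components $C_1,\dots,C_m$, and then to compute $\psi$ for a single cycle by an inclusion-exclusion over breakings that collapses into a sum over alternating permutations. First I would establish a multiplicativity property: if $G$ is a disjoint union $G' \sqcup G''$, then for a uniformly random $\sigma \in S_t$ the order statistics on the vertices of $G'$ and on those of $G''$ are ``asymptotically independent'' in the precise sense that $P(G' \sqcup G'') = \binom{|G'|+|G''|}{|G'|}^{-1} \cdot$ (number of ways to interleave the two linear orders compatibly) — but more usefully, I would pass to the exponential generating function. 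Concretely, for a $2$-regular directed graph $H$ write $P(H) = a(H)/|H|!$ where $a(H)$ counts linear extensions; then $a(G' \sqcup G'')$ is a shuffle, so $\sum_H \psi(H) z^{|H|}/|H|!$ factors as a product over components. Hence it suffices to prove the single-cycle case: $\psi(C) = (-1)^{l/2}\,\mathrm{sign}(C)\,\beta_l$ for a cycle $C$ on $l$ vertices (the global sign $(-1)^k = (-1)^{\sum l_j/2}$ then distributes as $\prod (-1)^{l_j/2}$).

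For a single directed cycle $C$ on vertices $1,\dots,l$, I would analyze $\psi(C) = \sum_{H \in \mathcal{B}(C)} (-1)^{|H|} P(H)$ directly. Breaking $C$ at a subset $W$ of its vertices turns the cycle into a disjoint union of directed paths; $P(H)$ for a union of paths is a multinomial-weighted product of path probabilities, and $P(\text{path on } p \text{ vertices with a given forward/backward edge pattern}) = (\#\text{linear extensions})/p!$. The key observation is that $P(H)$ depends only on the orientation pattern around $C$, and the inclusion-exclusion $\sum_W (-1)^{|W| + (\#\text{paths})}$ over subsets $W$ of break-points telescopes: breaking at $v$ versus not breaking at $v$ differ exactly by whether the two edges at $v$ are forced to be consistently ordered through $v$. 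Carrying this out, $\psi(C)$ becomes $1/l!$ times a signed count of permutations of $[l]$ that are ``cyclically alternating'' with respect to the edge orientations of $C$ — i.e. permutations $\sigma$ such that along the cyclic order the pattern of ascents/descents matches the forward/backward pattern of $C$'s edges. This is where the odd case dies: a cyclic ascent/descent word of odd length cannot be realized (the number of ascents equals the number of descents around a cycle only forces consistency in even length, and when $\mathrm{sign}(C)$ would have to be read off an odd cycle the relevant count is $0$), giving $\beta_l = 0$ for odd $l$, and for even $l$ the count is governed by a single "alternating" pattern up to the sign, yielding $\mathrm{sign}(C)$ times the number of alternating permutations.

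The final input is the classical evaluation, due to D.~André, of the number of alternating (equivalently, up-down) permutations of $[l]$, whose exponential generating function is $\sec + \tan$; restricting to even length $l=2m$ gives the tangent/secant numbers, and the generating function identity $\sum_{m\ge1} (-1)^{m+1} B_{2m} z^{2m}/(2m)! = 1 - (z/2)\cot(z/2)$ quoted already in the excerpt identifies the per-cycle contribution as exactly $\beta_l = (-1)^{l/2+1} B_l/l!$. Assembling: the EGF of $\sum_H \psi(H) z^{|H|}/|H|!$ over all $2$-regular $H$ is $\exp\!\left(\sum_{l\ge2} \beta_l z^l / l \cdot (\text{cycle-sign weighting})\right)$, and extracting the coefficient corresponding to a fixed cycle type $(l_1,\dots,l_m)$ reproduces $(-1)^k \prod_j \mathrm{sign}(C_j)\beta_{l_j}$, since a cycle on $l$ labelled vertices arises in $(l-1)!/$(symmetry) ways matching the $1/l$ in $\beta_l z^l/l$.

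\textbf{Main obstacle.} The hard part will be the combinatorial bookkeeping in the single-cycle inclusion-exclusion: correctly tracking how breaking a vertex interacts with the forced inequalities, showing the telescoping leaves precisely the cyclically-alternating count, and pinning down the sign $(-1)^{|H|}$ together with the number of resulting paths so that everything collapses to $\mathrm{sign}(C)$ times André's number. Getting the odd-cycle vanishing and the exact sign convention ($(-1)^{l/2+1}$ versus $(-1)^{l/2}$, and where the overall $(-1)^k$ comes from) right is the delicate point; the analytic identities relating $\sec+\tan$, $\cot$, and Bernoulli numbers are standard and already cited in the excerpt.
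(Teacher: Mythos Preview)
Your multiplicativity step is fine, though the EGF packaging is unnecessary: the paper simply observes that for a breaking $H$ of $G$, the events coming from different cycles live on disjoint vertex sets and are therefore independent, so $P(H)=\prod_j P(H_j)$ and the distributive law gives $\psi(G)=\prod_j\psi(C_j)$ directly. The final ``assembling'' paragraph, summing an EGF over all $2$-regular graphs and then extracting a coefficient for a fixed cycle type, is backwards --- you want $\psi$ of one fixed $G$, and once multiplicativity is proved that is immediate.

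The real gap is in the single-cycle step. Your ``telescoping to a count of cyclically alternating permutations'' does not hold as stated: already for the alternating $4$-cycle (two sources, two sinks) one has $P(C)=1/6$ while $\psi(C)=\beta_4=1/720$, so the inclusion--exclusion over breakings is not simply collapsing to $P(C)$. What is true is that Andr\'e's numbers enter when the cycle has the specific \emph{alternating} orientation (this is Mingo--Nica's bipartite setting): breaking such a cycle gives alternating paths, and $P(\text{alternating path on }p\text{ vertices})=A_p/p!$ with $\sum A_p x^p/p!=\sec x+\tan x$. But Lemma~\ref{cycle} concerns cycles with \emph{arbitrary} edge orientations, and you have no mechanism to reduce to the alternating (or any canonical) orientation. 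The paper supplies exactly this missing step: it shows that flipping the orientation of a single edge negates $\psi(C)$, via the identity $P(\cdots\bullet\!\!\!\to\!\!\!\bullet\cdots)+P(\cdots\bullet\!\!\!\leftarrow\!\!\!\bullet\cdots)=P(\cdots\bullet\ \ \bullet\cdots)$ applied at an endpoint of the flipped edge. This reduction simultaneously yields the vanishing for odd cycles (flip all edges: same graph, opposite sign) and reduces the even case to the \emph{consistently oriented} cycle $C^l$. For $C^l$ the breakings are unions of monotone paths with $P(P^i)=1/(i+1)!$, and the generating function computation lands on $1-(x/2)\coth(x/2)$, i.e., the Bernoulli numbers, without passing through Andr\'e at all. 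Your proposal is missing this edge-flipping reduction, and without it the general-orientation case does not go through.
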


\begin{rmk*}
By the properties of the Bernoulli numbers, $\beta_l = 0$ for every odd $l \geq 3$. We arbitrarily set $\beta_0 = \beta_1 = 0$ as well.
\end{rmk*}

\begin{proof}
The proof has three parts. We first show that every cycle can be treated separately. Then we reduce to the case of a \emph{consistently oriented cycle}, that is a cycle in the directed sense, with one ingoing and one outgoing edge at each vertex. For consistently oriented cycles, we calculate~$\psi$ directly.

\begin{enumerate}
\item 
Let $H$ be a breaking of $G$. Note that the order relations between the value of $\sigma$ on a subset of the vertices is independent of the order relations within any disjoint subset of vertices. In particular, events that involve edges in $H$ that come from different cycles of $G$ are independent. Therefore, for such $H$,
$$ P(H) = P(H_1) P(H_2) \cdots P(H_m) $$ 
where $H_i$ is the subgraph of $H$ with vertices that come from the cycle $C_i$ in $G$. Now, by the independence of cycles and the distributive law
$$ \psi(G) = \psi(C_1) \psi(C_2) \cdots \psi(C_m) \;.$$
It is now sufficient to show that $\psi(C) = (-1)^{l/2} \mathrm{sign}(C) \beta_{l}$ on cycles of even length $l$, and $0$ on odd ones.

\item 
We further reduce to the case of a consistently oriented cycle. We show that flipping the orientation of an edge $e$ in $C$ changes the sign of $\psi(C)$. The lemma follows since this also flips $\text{sign}(C)$.

Let $C'$ be the cycle with a flipped edge, and let $v$ be a vertex incident to $e$. Recall that $\mathcal{B}(C)$ denotes all $2^{|C|}$ ways to break $C$, and denote by $\mathcal{B}_v(C)$ all $2^{|C|-1}$ ways to break $C$ except at $v$. For $H \in \mathcal{B}_v(C)$, we write $P(drawing)$, referring to $P(\tilde{H})$, where the drawing describes a neighborhood of $v$ where $\tilde{H}$ disagrees with~$H$, e.g. in the breaking of $v$ or in the orientation of $e$. With this notation,
\begin{align*}
\psi(C) + \psi(C') \;=\;
& \sum\limits_{H \in \mathcal{B}(C)} (-1)^{|H|} P(H) + \sum\limits_{H \in \mathcal{B}(C')} (-1)^{|H|} P(H) \\
\;=\;
& \sum\limits_{H \in \mathcal{B}_v(C)} (-1)^{|H|}
\left[P(-\!\!\bullet\!\!\!\rightarrow\!\!\!-)
- P(-\!\!\!\bullet\bullet\!\!\!\rightarrow\!\!\!-)
+ P(-\!\!\!\bullet\!\!\!-\!\!\!\leftarrow)
- P(-\!\!\!\bullet\bullet\!\!-\!\!\!\leftarrow) \right] \;\;.
\end{align*}
Note that summing the probabilities of two graphs that differ by the orientation of one edge yields the probability of the graph with that edge deleted. In terms of drawings,
$$
P(\cdots\bullet\!\!\!\rightarrow\!\!\!-\!\!\!\bullet\cdots)
+ P(\cdots\bullet\!\!\!-\!\!\!\leftarrow\!\!\!\bullet\cdots) \;=\;
P(\cdots\bullet\;\;\;\bullet\cdots) \;\;.
$$
By applying this to the first and third terms, and to the second and fourth terms, in the above sum,
$$
\psi(C) + \psi(C') \;=\;
\sum\limits_{H \in \mathcal{B}_v(C)} (-1)^{|H|}
\left[P(-\!\!\!\bullet\;\;\;\;) - P(-\!\!\!\bullet\bullet\;\;\;\;) \right] \;=\; 0 $$
Each term in the above sum vanishes since the addition of an isolated vertex preserves the probability of an oriented graph.

It follows that flipping all edges of a consistently oriented odd cycle negates $\psi$ and preserves the graph.
We deduce that $\psi(C)$ vanishes on any cycle $C$ of odd length $l > 1$.

\item 
We finally compute $\psi(C^{2k})$ where $C^l$ is the consistently oriented cycle on $l$ vertices. A nontrivial breaking of $C^l$ is a disjoint union of consistently oriented paths. For $P^i$, a path with $i$ edges, clearly $P(P^i) = 1/(i+1)!$. If $C^l$ is broken into $j$ paths, then they give independent events, and the probability is a product of such factors.

In order to sum over all breakings, we consider all ordered partitions $i_1+\dots+i_j=l$. Such partition corresponds to breaking $C^l$ into $j$ paths with a choice of which one is considered to be first. We thus multiply by $l$ as the first path can start at any point in $C^l$, and divide by~$j$ as any such partition is counted $j$ times.
$$ \psi(C^l) \;=\; \sum\limits_{j=1}^{l} \;\frac{l}{j} \sum\limits_{i_1+\dots+i_j=l} \frac{(-1)^j}{(i_1+1)! \cdots (i_j+1)!} $$
We define a generating function
$$ \Psi(x) \;=\; \sum\limits_{l=2}^{\infty} \psi(C^l) x^l \;=\;
\frac{x}{2} + \sum\limits_{l=1}^{\infty} \;\sum\limits_{j=1}^{l} \frac{(-1)^j}{j} \sum\limits_{i_1+\dots+i_j=l} \;\;\frac{l \cdot x^l}{(i_1+1)! \cdots (i_j+1)!} $$
where the term $x/2$ cancels the remaining odd case $l=1$. In order to identify $\Psi(x)$ with an analytic expression, we define
\begin{align*}
y(x) &\;=\; \frac{\exp(x)-1}{x}-1 \;=\; \sum\limits_{i=1}^{\infty} \frac{x^i}{(i+1)!} \\
z(y) &\;=\; -\log(1+y) \;=\; \sum\limits_{j=1}^{\infty} \frac{(-1)^j}{j} y^j
\end{align*}
which yields,
\begin{align*}
z(y(x)) &\;=\; -\log\left(\frac{\exp(x)-1}{x}\right) \;=\; \sum\limits_{j=1}^{\infty} \frac{(-1)^j}{j} \left(\sum\limits_{i=1}^{\infty} \frac{1}{(i+1)!} x^i\right)^j \\
 &\;=\; \sum\limits_{j=1}^{\infty} \;\sum\limits_{l=j}^{\infty} \frac{(-1)^j}{j}\sum\limits_{i_1+\dots+i_j=l} \;\;\frac{x^l}{(i_1+1)! \cdots (i_j+1)!} 
\;=\; \int_0^x\frac{\Psi(x')dx'}{x'} - \frac{x}{2} \;.
\end{align*}
By differentiation and using the power series for $\coth$~\cite[p. 42]{jeffrey2007table},
$$ \Psi(x) \;=\; x \left(\frac12 + \frac{dz}{dx}\right) \;=\; 1 - \frac{x}{2} \coth \frac{x}{2} \;=\; 
-\sum\limits_{l=2}^{\infty} \frac{B_l}{l!}x^l \;=\; \sum\limits_{l=0}^{\infty} (-1)^{l/2}\beta_l x^l \;.$$
In conclusion, $\psi(C^{2k}) = (-1)^k\beta_{2k}$ and the proof is complete. \qedhere
\end{enumerate}
\end{proof}

\begin{rmk*}
The first few $\beta_{2k}$'s are given by
$$ \beta_2 \;=\; \frac{1}{12} \;,\;\;\;\;
\beta_4 \;=\; \frac{1}{720} \;,\;\;\;\; 
\beta_6 \;=\; \frac{1}{30240} \;,\;\;\;\; 
\beta_8 \;=\; \frac{1}{1209600} \;,\;\;\;\; 
\beta_{10} \;=\; \frac{1}{47900160} \;.$$
\end{rmk*}

\begin{cor}\label{formula}
$$ \lambda_k \;=\; \frac{1}{(2k)!} \;\sum\limits_{\bold{T}' \in \mathcal{T}_k^1} \;\prod_{j=1}^m \mathrm{sign}(C_j) \cdot \beta_{l_j} \;,$$
where $C_1,\dots,C_m$ are the cycles in $G(\overline{\bold{T}'})$, of length $l_1,\dots,l_m$.
\end{cor}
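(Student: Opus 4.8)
The plan is simply to chain together the three results immediately preceding the statement, with one small bookkeeping check. First I would start from Lemma~\ref{psi-lemma}, which gives
$$ \lambda_k \;=\; \frac{(-1)^k}{(2k)!}\;\sum\limits_{\bold{T}' \in \mathcal{T}_k^1} \psi(\bold{T}')\;. $$
Next I would recall the identification established just before Lemma~\ref{cycle}: for $\bold{T}' \in \mathcal{T}_k^1$ one has $\psi(\bold{T}') = \psi(G)$ where $G = G(\overline{\bold{T}'})$, because the $2^{2k}$ patterns $\bold{T}$ with $\bold{T}' \prec \bold{T} \prec \overline{\bold{T}'}$ are exactly the $2^{|G|}$ breakings of $G$, and $P(G(\bold{T})) = P(\bold{T})$ by definition.

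The step needing a word of justification is that Lemma~\ref{cycle} actually applies to $G(\overline{\bold{T}'})$. Since $\bold{T}' \in \mathcal{T}_k^1$ contains each element of $\{1,\dots,4k\}$exactly once, we have $t(\bold{T}') = 4k$, and merging the $2k$ consecutive pairs $(1,2),\dots,(4k-1,4k)$ produces $\overline{\bold{T}'}$ with $t(\overline{\bold{T}'}) = 2k$, each label now occurring twice. Hence $G(\overline{\bold{T}'})$ is a directed graph on exactly $2k$ vertices in which every vertex has total degree $2$, i.e.\ a $2k$-vertex $2$-regular directed graph — precisely the hypothesis of Lemma~\ref{cycle}. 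Its underlying undirected graph is therefore a disjoint union of cycles $C_1,\dots,C_m$ with $\sum_j l_j = 2k$, and Lemma~\ref{cycle} yields
$$ \psi(G(\overline{\bold{T}'})) \;=\; (-1)^k \prod_{j=1}^m \mathrm{sign}(C_j)\cdot\beta_{l_j}\;. $$

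Plugging this into the expression from Lemma~\ref{psi-lemma}, the two factors of $(-1)^k$ cancel, giving
$$ \lambda_k \;=\; \frac{1}{(2k)!}\;\sum\limits_{\bold{T}' \in \mathcal{T}_k^1}\;\prod_{j=1}^m \mathrm{sign}(C_j)\cdot\beta_{l_j}\;, $$
which is the claim. The argument is essentially immediate once the hypotheses are matched up, so there is no real obstacle; the only point one should not skip is verifying the vertex count and $2$-regularity of $G(\overline{\bold{T}'})$ above, and perhaps remarking that odd-length cycles contribute $0$ on both sides (via $\mathrm{sign}=0$ on the right and $\beta_l = 0$ for odd $l$), so the formula is consistent and no odd cycles actually survive.
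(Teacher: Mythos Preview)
Your proposal is correct and follows exactly the approach of the paper, which proves the corollary in a single line: ``This follows from Lemmas~\ref{psi-lemma} and~\ref{cycle}.'' Your write-up simply unpacks that sentence, including the verification that $G(\overline{\bold{T}'})$ is $2k$-vertex and $2$-regular so that Lemma~\ref{cycle} applies; this is more detail than the paper itself provides, but the logic is identical.
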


\begin{proof}
This follows from Lemmas~\ref{psi-lemma} and~\ref{cycle}.
\end{proof}

\newcommand{\arcsgraph}[2]
{\tikzstyle{every node}=[circle, draw, minimum width=2pt]
\begin{tikzpicture}[baseline=-4pt]
\foreach \x in {1,...,#1} \filldraw (\x*0.4,0) circle (1pt);
\foreach \x/\y/\z in {#2} \draw[shorten >=1pt,-,decoration={markings,mark=at position 0.8 with {\arrow{>}}},postaction={decorate}] (\x*0.4,0) .. controls +(90-\z:0.2) and +(90+\z:0.2) .. (\y*0.4,0) ;
\end{tikzpicture}}

We demonstrate this formula on the first few moments.
\begin{itemize}
\item 
For $k=1$ the only pattern in $\mathcal{T}_1^1$ is $\bold{T}' = ((1,2,3,4))$, so that $\overline{\bold{T}'} = ((1,1,2,2))$, and $G(\overline{\bold{T}'}) = \arcsgraph{2}{1/2/45,2/1/-135}
$, a single positive cycle of length $2$. This yields
$$ E[c_2/n^2] \;\to\; \lambda_1 \;=\; \frac{+\beta_2}{2!} \;=\; \frac{1}{24} \;,$$
in accordance with our direct calculation in~(\ref{expectation}): $E[c_2] = n(n-1)/24$.

\item 
For $k=2$ there are $\tbinom{8}{4} = 70$ relevant patterns in $\mathcal{T}_2^1$, as $\{1,\dots,8\}$ should split between two quadruples. Sorting them into unions of cycles,
$$ E[c_2^2/n^4] \;\to\; \lambda_2 \;=\; \frac{6 (+\beta_2)^2 + 16 (-\beta_2)^2 + 32 (+\beta_4) + 16 (-\beta_4)}{4!} \;=\; \frac{7}{960}\;.$$
Some representative terms are: 
\begin{center}
\begin{tabular}{cccc}
$\bold{T}'$ & $\overline{\bold{T}'}$ & $G(\overline{\bold{T}'})$ & $\psi(G(\overline{\bold{T}'}))$ \\
\hline\hline
$((1,2,3,4),(5,6,7,8))$ & $((1,1,2,2),(3,3,4,4))$ & $\arcsgraph{4}{1/2/45,2/1/-135,3/4/45,4/3/-135}$ & $(+\beta_2)^2$ \\
$((1,3,5,8),(2,4,6,7))$ & $((1,2,3,4),(1,2,3,4))$ & $\arcsgraph{4}{3/1/-45,3/1/-135,2/4/45,2/4/135}$ & $(-\beta_2)^2$ \\
$((1,2,6,7),(3,4,5,8))$ & $((1,1,3,4),(2,2,3,4))$ & $\arcsgraph{4}{3/1/-45,3/2/-90,1/4/135,2/4/45}$ & $+\beta_4$ \\
$((1,6,7,8),(2,3,4,5))$ & $((1,3,4,4),(1,2,2,3))$ & $\arcsgraph{4}{2/1/-45,2/3/45,3/4/45,4/1/-135}$ & $-\beta_4$ \\
\hline\hline
\end{tabular}
\end{center}

\item 
The case $k=3$ was obtained with a computer.
$$ E[c_2^3/n^6] \;\to\; \lambda_3 \;=\; \frac{1194 \beta_2^3 + 5328 \beta_2\beta_4 + 6528\beta_6}{6!} \;=\; \frac{5119}{2419200} $$
In order to independently verify this outcome, we compute the entire distribution of \linebreak $c_2(K_{2n+1})$ for each $0 \leq n \leq 6$. By interpolation we obtain the moments as polynomials in $n$, as follows.
\begin{align*}
E[c_2(K_{2n+1})^2] \;&=\; \frac{7n^4 - 2n^3 - 3n^2 - 2n}{960} \\
E[c_2(K_{2n+1})^3] \;&=\; \frac{5119n^6-3033n^5-3125n^4+3465n^3-914n^2-1512n}{2419200}
\end{align*}
Note that the leading terms' coefficients are exactly $\lambda_2$ and $\lambda_3$.

\item 
For $k=4$,
$$\lambda_4 \;=\; \frac{194904\beta_2^4+1855872\beta_2^2\beta_4+4442112\beta_2\beta_6+1774080\beta_4^2+6506496\beta_8}{8!} \;=\; \frac{812143}{677376000} $$
\end{itemize}

Unfortunately, without better control of the cancellations in the Corollary~\ref{formula} sum, we cannot infer weak convergence of the normalized distributions. However, we see evidence for convergence in the histograms of $c_2/n^2$ for random samples of permutations, as in Figure~\ref{fig_hist}. These seem to converge as $n$ grows, to an asymmetric continuous distribution. 

\begin{figure}[bht]
\centering
\includegraphics[width=0.8\textwidth]{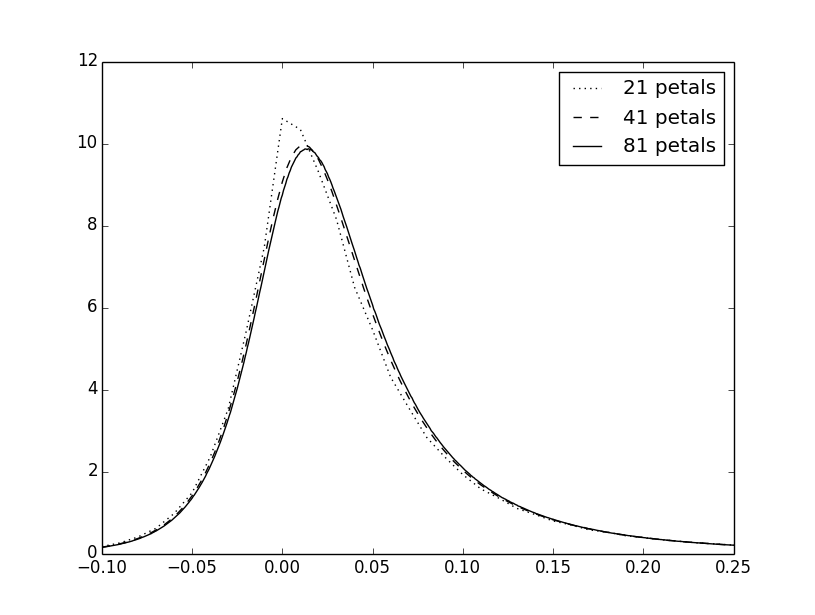}
\caption{Distribution of $c_2/n^2$ for $(2n+1)$-petal diagrams, based on over $10^9$ random samples.}
\label{fig_hist}
\end{figure}

\subsection*{Other Models}

To study model dependence, we compare our results for $c_2$ with two related random models, the star and the grid.

Our computations in the Petaluma model reduced the sum of $n^4$ terms to $O(n^2)$. This is significant because similar computations in the star model yield $n^3$. The \emph{star model} is defined by taking the $(2n+1)$-star diagram, as in Figure~\ref{fig_star9}, and choosing the sign of every crossing independently at random. This model is universal, since every petal diagram corresponds to a star diagram. However, the $(2n+1)$-star model realizes knots that a $(2n+1)$-petal diagram doesn't, like the $(2n+1,n)$ torus knot in the case that all crossings are positive.

One can compute the expectation of $c_2$ in this model, by Equation~(\ref{expectation}) with the probabilities $1/3$ and $1/6$ replaced by $1/4$. This yields $E[c_2] = (n^3-n)/12 = \Theta(n^3)$, compared to $\Theta(n^2)$ in the Petaluma model. By a similar computation for the second moment the variance is given by $(2n^4 + n^3 - 2n^2 - n)/48 = O(n^4)$. By Chebyshev's inequality this means that $c_2$ is almost surely positive in this model. Numerical simulations imply that $(c_2-E[c_2])/n^2$ converges to a continuous distribution. See Figure~\ref{fig_hist_models}.

While in both models a knot projects to a star diagram, we point out a basic difference between the two. In the Petaluma model the original knot can always be realized by a polygon with $2(2n+1)$ segments, whereas the star model usually requires many more segments, possibly as many as~$n^2$. The typical length of those segments is tiny compared to the size of the knot. This resembles the random models based on a random walk in $\R^3$, that also have small edges. Another model with long edges is the grid model that we now describe.

A \emph{grid diagram} of order $m$ is a polygonal knot diagram consisting of $m$ vertical and $m$ horizontal edges, where vertical edges always pass over horizontal edges~\cite{cromwell1998arc, brunn1897uber}. The $x,y$ coordinates of the vertices are determined by a pair of permutations $\sigma,\pi \in S_m$ in the following way: $\sigma(0),\pi(0)$ $\;\to\;$ $\sigma(0),\pi(1)$ $\;\to\;$ $\sigma(1),\pi(1)$ $\;\to\;$ $\sigma(1),\pi(2)$ $\;\to\;\cdots\;\to\;$ $\sigma(m-1),\pi(0)$ $\;\to\;$ $\sigma(0),\pi(0)$. A random knot in the \emph{grid model} is obtained by picking $\sigma$ and $\pi$ independently and uniformly at random. 

Adams et al.~\cite[after Corollary 3.7]{adams2012knot} remark that a petal diagram can be turned into a grid diagram. The grid diagram has the same $\pi \in S_{2n+1}$ as in the petal diagram, and $\sigma$ defined by $\sigma(k)=nk \mod 2n+1$. 

This is demonstrated below for the trefoil knot, starting with a petal diagram with triangular petals and $\pi=(1,4,2,0,3)$. Such a diagram is the planar projection of a polygonal \emph{windmill knot}, where the straight lines through the center are lifted to horizontal segments at the appropriate heights. These segments are then folded at the center so that the other segments that connect them become vertical, which creates a \emph{watermill knot}. This is in fact a \emph{book knot} whose $2n+1$ pages are evenly spread out. Book knots can be represented by grid diagrams, whose horizontal lines come from pairs of segments to a vertical axis behind the diagram's plane.
\nopagebreak
\begin{center}
\begin{tabular}{ccccc}
&&&&\\
\tikz[thick,>=stealth]{
\foreach \angle in {36,108,...,360} \draw[line width = 1,decoration={markings,mark=at position 0.5 with {\arrow{>}}},postaction={decorate}] 
(0,0) -- +(\angle:1.1) -- +(\angle-36:1.1) -- (0,0); 
\foreach \angle/\num in {72/4, 144/3, 216/2, 288/1, 360/0} \draw (\angle-10:0.5) node[scale=0.7] {$\num$};
\pgfresetboundingbox
\clip (-1.25,-1.25) rectangle (1.25,1.25);
}
&\tikz[thick,>=stealth]{
\draw[lightgray, line width = 2] (0,0.15) -- (0,-0.45);
\def\angles{{125,180,235,-30,+30}};
\def\radii{{1.0,1.2,1.1,1.1,1.1}};
\foreach \xa/\ya/\xb/\yb in {1/0/4/0,0/1/3/1,4/2/2/2,3/3/1/3,2/4/0/4} { 
\draw[>=triangle 90 cap,white,<->,line width = 3,shorten >=1, shorten <=1] (0,\ya*0.15-0.45) +(\angles[\xa]-0:\radii[\xa]) -- +(\angles[\xa]+180:\radii[\xa]);
\draw[black,-,line width = 1] (0,\ya*0.15-0.45) +(\angles[\xa]-0:\radii[\xa]) -- +(\angles[\xa]+180:\radii[\xa]); 
\filldraw (0,\ya*0.15-0.45) circle[radius=0.025];}
\foreach \xa/\ya/\xb/\yb in {0/1/3/3,3/3/1/0,1/0/4/2,4/2/2/4,2/4/0/1} {
\draw[>=triangle 90 cap,white,<->,line width = 3,shorten >=2, shorten <=2] 
($ (0,\ya*0.15-0.45) + (\angles[\xa]+180:\radii[\xa]) $) -- ($ (0,\yb*0.15-0.45) + (\angles[\xb]:\radii[\xb]) $); 
\draw[black,-,line width = 1,decoration={markings,mark=at position 0.6 with {\arrow{<}}},postaction={decorate}] 
($ (0,\ya*0.15-0.45) + (\angles[\xa]+180:\radii[\xa]) $) -- ($ (0,\yb*0.15-0.45) + (\angles[\xb]:\radii[\xb]) $);}
\pgfresetboundingbox
\clip (-1.25,-1.25) rectangle (1.25,1.25);
}
&\tikz[thick,>=stealth]{
\draw[lightgray, line width = 2] (0,0.4) -- (0,-0.6);
\def\angles{{125,180,235,-30,+30}};
\def\radii{{1.0,1.2,1.0,1.1,1.1}};
\foreach \xa/\ya/\xb/\yb in {1/0/4/0,0/1/3/1,4/2/2/2,3/3/1/3,2/4/0/4} { 
\draw[>=triangle 90 cap,white,<->,line width = 3,shorten >=0, shorten <=2] (0,\ya*0.25-0.6) -- +(\angles[\xa]:\radii[\xa]);
\draw[>=triangle 90 cap,white,<->,line width = 3,shorten >=0, shorten <=2] (0,\ya*0.25-0.6) -- +(\angles[\xb]:\radii[\xb]); 
\draw[black,-,line width = 1] (0,\ya*0.25-0.6) -- +(\angles[\xa]:\radii[\xa]); 
\draw[black,-,line width = 1] (0,\ya*0.25-0.6) -- +(\angles[\xb]:\radii[\xb]);
\filldraw (0,\ya*0.25-0.6) circle[radius=0.03];}
\foreach \xa/\ya/\xb/\yb in {3/1/3/3,1/3/1/0,4/0/4/2,2/2/2/4,0/4/0/1} {
\draw[>=triangle 90 cap,white,<->,line width = 3,shorten >=2, shorten <=2] (0,\ya*0.25-0.6) ++(\angles[\xa]:\radii[\xa]) -- +(0,\yb*0.25-\ya*0.25); 
\draw[black,-,line width = 1,decoration={markings,mark=at position 0.5 with {\arrow{<}}},postaction={decorate}] 
(0,\ya*0.25-0.6) ++(\angles[\xa]:\radii[\xa]) -- +(0,\yb*0.25-\ya*0.25); }
\pgfresetboundingbox
\clip (-1.25,-1.25) rectangle (1.25,1.25);
}
&\tikz[thick,>=stealth]{
\draw[lightgray, line width = 2] (0.25,-0.8) -- (0.25,1.2);
\foreach \xa/\ya/\xb/\yb in {0/1/3/1,3/3/1/3,1/0/4/0,4/2/2/2,2/4/0/4} { 
\draw[black,-,line width = 1] (\xa*0.5-1,\ya*0.5-1) -- (0.25,\ya*0.5-0.8) -- (\xb*0.5-1,\yb*0.5-1); 
\filldraw (0.25,\ya*0.5-0.8) circle[radius=0.05];}
\foreach \xa/\ya/\xb/\yb in {3/1/3/3,1/3/1/0,4/0/4/2,2/2/2/4,0/4/0/1} { 
\draw[>=triangle 90 cap,white,<->,line width = 4,shorten >=1, shorten <=1] (\xa*0.5-1,\ya*0.5-1) -- (\xb*0.5-1,\yb*0.5-1); 
\draw[black,-,line width = 1,decoration={markings,mark=at position 0.35 with {\arrow{<}}},postaction={decorate}] (\xa*0.5-1,\ya*0.5-1) -- (\xb*0.5-1,\yb*0.5-1); }
\pgfresetboundingbox
\clip (-1.25,-1.25) rectangle (1.25,1.25);
}
&\tikz[thick,>=stealth]{
\foreach \xa/\ya/\xb/\yb/\d in {0/1/3/1/,3/3/1/3/,1/0/4/0/,4/2/2/2/,2/4/0/4/, 3/1/3/3/decorate,1/3/1/0/decorate,4/0/4/2/decorate,2/2/2/4/decorate,0/4/0/1/decorate} { 
\draw[>=triangle 90 cap,white,<->,line width = 4,shorten >=1, shorten <=1] (\xa*0.5-1,\ya*0.5-1) -- (\xb*0.5-1,\yb*0.5-1); 
\draw[black,-,line width = 1,decoration={markings,mark=at position 0.35 with {\arrow{<}}},postaction={\d}] (\xa*0.5-1,\ya*0.5-1) -- (\xb*0.5-1,\yb*0.5-1); }
\filldraw (-1,-0.5) circle[radius=0.05];
\pgfresetboundingbox
\clip (-1.25,-1.25) rectangle (1.25,1.25);
}
\\
Petaluma & Windmill & Watermill & Book & Grid \\&&&&
\end{tabular}
\end{center}

We created histograms for the Casson invariant of order-$m$ random knots in the grid model for $m=50,100,200$. As in the Petaluma model, these suggest that $c_2/m^2$ weakly converges. It would be interesting to extend the study of the $c_2$ moments to the grid model. While our methods seem to apply to this situation as well, the details are bound to be substantially more complicated.

Figure~\ref{fig_hist_models} displays numerically generated histograms of $c_2$ in the different models, normalized to have expectation $0$ and variance $1$. These seem to share certain properties. It is unknown but possible that there is some \emph{universal} family of distributions for several random models of knots.

\begin{figure}[htb]
\centering
\includegraphics[width=0.8\textwidth]{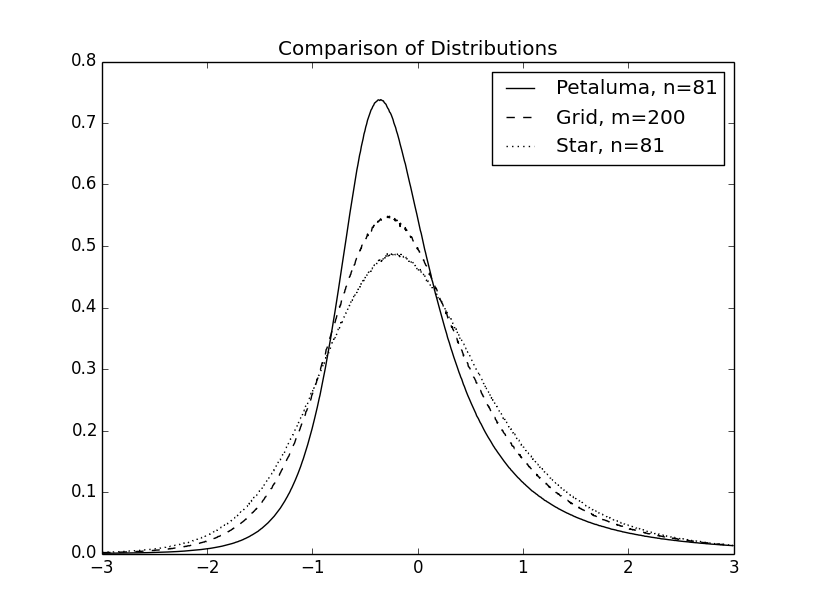}
\caption{Normalized distribution of $c_2$ in three models, based on over $10^8$ random samples.}
\label{fig_hist_models}
\end{figure}

\section{Order 3}\label{j3sect}

The knot invariant $v_3$ is the unique order-$3$ invariant that vanishes on the unknot, equals $1$ on the positive trefoil, and $-1$ on its reflection, the negative trefoil~\cite{polyak1994gauss}. Alternatively, $-6v_3$ is the third coefficient of the modified Jones polynomial, that is the power series in $h$ of the Jones polynomial $j(t)$ after the substitution $t=e^h$~\cite{chmutov2012introduction}. By  properties of the Jones polynomial, $v_3$ is antisymmetric with respect to reflection, and the distribution of $v_3(K_{2n+1})$ is symmetric around $0$. Hence, only even moments of $v_3$ are nonzero.

Here we work with a Gauss Diagram formula for $v_3$~\cite{polyak1994gauss}. A \emph{Gauss diagram} $D$ of a given knot diagram is the circle that maps to the knot diagram, with arrows connecting pairs of points that map to the same crossing. Each arrow is directed from the upper crossing point to the lower one, and marked with the sign of the crossing. We also mark the base point of the diagram, and orient the circle counterclockwise. The original knot diagram can be reconstructed from its Gauss diagram up to isotopy of the sphere $S^2$, though not all Gauss diagrams correspond to knots.

\begin{center}
\begin{tabular}{ccc}
\tikz[thick]{
\foreach \x in {0,1,...,4} {
\draw[>=triangle 90 cap,white,<->,line width = 6,shorten >=3, shorten <=3] (\x*288-54:2) -- (\x*288+90:2);
\draw[black,decoration={markings,mark=at position 0.25 with {\arrow{>}}},postaction={decorate}] (\x*288-54:2) -- (\x*288+90:2); }
\filldraw (306:2) circle (2pt);
\clip (-2,-2) rectangle (2,2);
} &
\tikz[thick]{
\draw[->,double,thick] (-1,0) -- (1,0);
\clip (-1,-2) rectangle (1,2);
} &
\tikz[thick]{
\filldraw (0,1.5) circle[radius=0.1];
\draw[line width=1.5pt] (0,0) circle[radius=1.5];
\foreach \start/\end/\sign in {290/110/+,330/140/-,0/190/+,215/40/+,70/265/+} \draw[>=triangle 60,->,line width=0.5pt] (0,0)+(\start:1.5) -- node[pos=-0.1,swap]{$\sign$} +(\end:1.5);
\clip (-2,-2) rectangle (2,2);
} \\
Knot Diagram & & Gauss Diagram
\end{tabular}
\end{center}

\newcommand{\AD}[1]
{\begin{tikzpicture}[baseline=-3pt,>=stealth]
\draw[thick] (0,0) circle[radius=0.25]; \filldraw (0,0.25) circle[radius=0.05];
\foreach \start/\end/\sign/\cosign in {#1} \draw[->] (\start:0.25) -- node[pos=-0.2,scale=0.6]{$\sign$} node[pos=1.2,scale=0.6]{$\cosign$} (\end:0.25);
\clip (-0.4,-0.4) rectangle (0.4,0.4);
\end{tikzpicture}}

A subdiagram of a Gauss diagram is obtained by considering a subset of the arrows. The number of appearances of~$D'$ as a subdiagram in $D$ is denoted $\left\langle D', D \right\rangle$. For example, if~$D$ is the Gauss diagram presented above, then
$$ \left\langle \;\AD{225/45/+/,315/135/+/}\;, D \right\rangle \;=\; 2 \;\;\;\;\;\;\;\; 
   \left\langle \;\AD{225/45/+/,315/135/-/}\;, D \right\rangle \;=\; 1 \;\;\;\;\;\;\;\; 
	 \left\langle \;\AD{225/45/-/,315/135/+/}\;, D \right\rangle \;=\; 0 \;\;\;\;\;\;\;\; 
	 \left\langle \;\AD{225/45/-/,315/135/-/}\;, D \right\rangle \;=\; 0 $$
A diagram without signs represents the formal sum over all ways to assign signs to the arrows, where each term is also multiplied by its signs. For example,
$$ \AD{225/45/,315/135/} \;=\; \AD{225/45/+/,315/135/+/}-\AD{225/45/-/,315/135/+/}-\AD{225/45/+/,315/135/-/}+\AD{225/45/-/,315/135/-/} \;\;.$$
The definition of $\left\langle \cdot , \cdot \right\rangle$ naturally extends to formal sums of diagrams.
In these terms, Lemma~\ref{c2formula} states
$$ c_2(K) \;=\; \left\langle \;\AD{225/45/,315/135/}\;, D \right\rangle $$
where $D$ is the Gauss diagram of any knot diagram that represents $K$. Note that in general such an expression depends on the choice of $D$. When independent of $D$, a formula of this form is called a \emph{Gauss diagram formula}. For example the Casson invariant of the trefoil knot shown above is
$$ \left\langle \;\AD{225/45/,315/135/}\;, D \right\rangle \;=\; 2 - 1 - 0 + 0 \;=\; 1 \;.$$
The order of a formula is the maximal number of arrows in one of its diagrams. Thus the above formula is of order~$2$. The Goussarov Theorem states that every finite type knot invariant of order~$m$ has a Gauss diagram formula of order~$m$~\cite{chmutov2012introduction,goussarov2000finite}. This formula is not unique. Already $c_2$ can be given also by
$$ \left\langle \;\AD{45/225/,135/315/}\;, D \right\rangle \;.$$
The following Gauss diagram formulas appear in the literature, the first three for $2v_3$, and the last one for $v_3$.
\begin{center}
\begin{tabular}{m{0.15\textwidth}m{0.65\textwidth}}
\hline \raggedright Polyak and Viro \cite{polyak1994gauss} &
$\AD{0/180/,250/110/,70/290/}+\AD{60/240/,310/170/,130/350/}+\AD{120/300/,10/230/,190/50/}+\AD{180/0/,70/290/,250/110/}+\AD{240/60/,130/350/,310/170/}+\AD{300/120/,190/50/,10/230/}+2\AD{0/180/,120/300/,240/60/}+2\AD{180/0/,300/120/,60/240/}$ \\ 
\hline \raggedright Willerton \cite{willerton1997vassiliev} &
$\AD{300/120/,230/10/,50/190/}+\AD{300/120/,10/230/,190/50/}+\AD{240/60/,170/310/,350/130/}+\AD{240/60/,310/170/,130/350/}
+\AD{0/180/,290/70/,250/110/}-\AD{180/0/,290/70/,250/110/}+\AD{180/0/,70/290/,250/110/}-\AD{0/180/,290/70/,110/250/} \hspace*{\fill}\linebreak
+2\AD{180/0/,290/70/,110/250/}+2\AD{0/180/,120/300/,240/60/}+2\AD{180/0/,300/120/,60/240/}$ \\
\hline \raggedright Goussarov, Polyak and Viro \cite{goussarov2000finite} &
$\AD{0/180/,120/300/,60/240/}+\AD{180/0/,300/120/,240/60/}+\AD{0/180/,120/300/,240/60/}+\AD{180/0/,300/120/,60/240/}
+\AD{120/300/,10/230/,50/190/}+\AD{300/120/,230/10/,190/50/}+\AD{60/240/,310/170/,130/350/}+\AD{240/60/,170/310/,350/130/} \hspace*{\fill}\linebreak
+\AD{0/180/,110/250/,290/70/}+\AD{180/0/,250/110/,70/290/}
+\AD{45/225/+/,135/315/+/}-\AD{45/225/+/,135/315/-/}+\AD{225/45/-/,315/135/+/}-\AD{225/45/-/,315/135/-/}$ \\
\hline \raggedright Chmutov and Polyak \cite{chmutov2012introduction} &
$\AD{180/0/,300/120/,240/60/}+\AD{0/180/,300/120/,240/60/}+\AD{240/60/,310/170/,350/130/}+\AD{0/180/,290/70/,250/110/}+\AD{300/120/,230/10/,190/50/}
+\AD{225/45/+/,315/135/+/}-\AD{225/45/-/,315/135/-/}$ \\
\hline
\end{tabular}
\end{center}
A typo in~\cite{goussarov2000finite} is corrected here. Note that our correction is different than in~\cite{chmutov2012introduction}. 

For the proof of Theorem~\ref{moments3} we adopt the formula by Goussarov, Polyak and Viro (GPV), which turns out to be best suited to generalize the $c_2$ arguments.

\subsection*{Proof of Theorem~\ref{moments3}}

The proof closely follows that of Theorem~\ref{moments2}, where we use the GPV formula for $v_3$. Hence we only highlight the adjustments that are required in each part of the proof.

\begin{enumerate}[leftmargin=12pt]
\item 
As for $c_2$, we represent $E[v_3^k]$ as a sum over patterns, parities and permutations. The only modification to be made is to extend the definition of a pattern from quadruples to \emph{arrow diagrams}, here simply meaning diagrams with no signs, as below. Consider the arrow diagrams that appear in the GPV formula:
$$ \mathcal{D} = \left\{ \AD{0/180/,120/300/,60/240/},\AD{180/0/,300/120/,240/60/},\AD{0/180/,120/300/,240/60/},\AD{180/0/,300/120/,60/240/},
\AD{120/300/,10/230/,50/190/},\AD{300/120/,230/10/,190/50/},\AD{60/240/,310/170/,130/350/},\AD{240/60/,170/310/,350/130/},
\AD{0/180/,110/250/,290/70/},\AD{180/0/,250/110/,70/290/},\AD{45/225/,135/315/},\AD{225/45/,315/135/}\right\} \;.$$
In contrast to the $c_2$ formula that uses only the last of these arrow diagrams, the $v_3$ formula is a combination of all twelve. Therefore, for the computation of $v_3^k$, a pattern records also which of the $12^k$ options for subdiagrams are involved in the $k$ corresponding terms in the sum. As for~$c_2$, it contains information about equalities and order relations between the relevant segments in the star diagram.

A \emph{pattern} $\bold{T}$ of order $k$ is a sequence $T_1,\dots,T_k$ of arrow diagrams in $\mathcal{D}$, whose arrow tips are marked with natural numbers. These numbers are non-decreasing when one moves counterclockwise from the base point, and their union is $\{1,\dots,t\}$ for some $t(\bold{T})$. For example, here is an order-$4$ pattern with $t=13$,
$$ \bold{T} = \left(\AD{180/0/6/10,300/120/9/2,240/60/6/11},\AD{300/120/5/1,230/10/5/8,190/50/5/9},\AD{225/45/7/13,315/135/9/3},\AD{240/60/6/12,170/310/4/9,350/130/9/2}\right) \;\;.$$
The segment numbers in an arrow diagram $T_i$ are denoted by $t_{i1},\dots,t_{i6}$ or $t_{i1},\dots,t_{i4}$. For example, here $t_{11}=2$ and $t_{12}=t_{13}=6$. The set of all order-$k$ patterns is denoted by $\mathcal{T}_k$.

\item
By the same reasoning as for $c_2$, we arrive at the following expression:
$$ E\left[v_3^k\right] \;=\; \sum\limits_{\bold{T} \in \mathcal{T}_k} \sum\limits_{\varepsilon \in \{\pm\}^t} \sum\limits_{\sigma \in S_t} \frac{1}{t!} \binom{n + z(\varepsilon)}{t} \;\prod\limits_{i=1}^k c(T_i,\varepsilon) \cdot \mathcal{I}\left[\substack{\sigma(t_{il}) \;>\; \sigma(t_{im}) \\ \text{for every arrow }t_{il} \to t_{im}\text{ in }T_i}\right] .$$
Here the function $c(T_i,\varepsilon)$, defined below, assumes the role of $-f(T_i,\varepsilon)\varepsilon(t_{i1})\varepsilon(t_{i2})\varepsilon(t_{i3})\varepsilon(t_{i4})$ in the proof of Theorem~\ref{moments2}. 

We first show that $\varepsilon$ determines the signs in an arrow diagram $T_i$, and yields a Gauss diagram. Recall from the discussion following Proposition~\ref{order} that the sign of a crossing point in a star diagram can be recovered from the parities of the two crossing segments. Specifically, every arrow $t_{il} \to t_{im}$ in $T_i$ is signed by $\pm\varepsilon(t_{il})\varepsilon(t_{im})$ depending on whether the crossing is ascending or descending.

The function $c(T_i,\varepsilon)$ is then defined to be the coefficient of this Gauss diagram in the GPV formula or $0$. This depends on whether the numbers at the arrow tips can or cannot correspond to a choice of segments as we traverse the star diagram, with crossings as in the Gauss diagram. The conditions are 
\begin{itemize}
\item 
Since a segment doesn't cross itself and two segments cross at most once, no arrow can point from a number to itself, and no two arrows connect the same pair of numbers.
\item 
If several arrow tips share a segment number then their order should agree with the order induced from the parities $\varepsilon$ by means of Proposition~\ref{order}.
\end{itemize}

For example, in the above $\bold{T}$ the compatibility conditions are $\varepsilon(6) = \varepsilon(10) \neq \varepsilon(11)$ for $T_1$, $\varepsilon(1) = \varepsilon(5) = \varepsilon(8) = \varepsilon(9)$ for $T_2$, and $\varepsilon(2) = \varepsilon(4)$ or $\varepsilon(4) \neq \varepsilon(9)$ for $T_4$. Note also that $c(T_3,\varepsilon)$ must be $0$ if $\varepsilon(7) = \varepsilon(13)$, because the sign of the arrow is given by $\varepsilon(7)\varepsilon(13)$ for this ascending crossing, while this arrow appears only with a minus sign in the two last diagrams of the GPV formula.

\item
The crucial point in Lemma~\ref{masstransfer} is deriving an expression where all terms are at least quadratic in the $\varepsilon(i)$'s. The following lemma plays the analogous role in the $v_3$ case.

Let $J \subseteq [t]$, and denote by $\mathcal{U}_J$ be the set of all numbered arrow diagrams from any pattern in~$\mathcal{T}_k$, that are marked exactly with the numbers in $J$. We represent their contribution to $v_3$ in the Fourier basis:
$$ \sum\limits_{S \in \mathcal{U}_J} c(S,\varepsilon) \cdot \mathcal{I}\left[\substack{\sigma(s_l) \;>\; \sigma(s_m) \\ \text{for every arrow }s_l \to s_m\text{ in }S}\right] \;=\;
\sum\limits_{I \subseteq J} \hat{c}(I,J,\sigma) \cdot\chi(I)(\varepsilon) $$
\begin{lemma}\label{j3masstransfer}
Let $J \subseteq [t]$ and $\sigma \in S_t$. Then $\hat{c}(I,J,\sigma) = 0$ in each of the following cases.
\begin{align*}
&|J| =6, \;|I| < 6 \\
&|J| =5, \;|I| < 4 \\
&|J| =4, \;|I| < 2
\end{align*}
\end{lemma}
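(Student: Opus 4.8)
The plan is to reproduce, in the $v_3$ setting, the mechanism of Lemma~\ref{masstransfer}: to show that after summing the contributions of \emph{all} arrow diagrams carried by a fixed label set $J$, the low-degree Fourier mass in the variable $\varepsilon|_J$ cancels. The thresholds $6,4,2$ for $|J|=6,5,4$ are precisely what is needed so that each of the $k$ arrow diagrams of a pattern contributes at most $3=|J_i|-\tfrac12\cdot(\text{threshold})$ (namely $6-3$, $5-2$, $4-1$) to the degree of $E[v_3^k]$ in $n$, yielding the bound $3k$ of Theorem~\ref{moments3}.

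The structural input, already implicit in the discussion following Proposition~\ref{order}, is that $c(S,\varepsilon)$ depends on $\varepsilon$ in only two ways. First, the sign of an arrow $t_{il}\to t_{im}$ of $S$ is $\pm\,\varepsilon(t_{il})\varepsilon(t_{im})$, where the leading sign is fixed by $S$ alone, through the arrow's direction and the relative order of its two endpoint labels (this is just $\mathrm{sign}(p_{\alpha\gamma})=(-1)^{\alpha+\gamma}$ or $(-1)^{\alpha+\gamma+1}$ according to which strand is on top). Second, whenever several tips of $S$ carry the same label $\ell$, the demand that their cyclic order along the common segment match the order dictated by Proposition~\ref{order} is an ``$=$''/``$\neq$'' condition between $\varepsilon(\ell)$ and the parities of its partner labels, i.e.\ a product of factors $\tfrac12\bigl(1\pm\varepsilon(\ell)\varepsilon(a)\bigr)$. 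Hence $c(S,\varepsilon)=\kappa_S\cdot m_S(\varepsilon)\cdot\prod\tfrac12\bigl(1\pm\varepsilon(\ell)\varepsilon(a)\bigr)$, with $\kappa_S\in\mathbb Z$ a GPV coefficient (possibly $0$) and $m_S$ the monomial built from the arrow signs.

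When $|J|=6$ the set $J$ can only be carried by a three-arrow diagram in which every label is a single tip, so no segment is shared by two of its crossings: the compatibility product is empty, $m_S=\pm\chi(J)$, and therefore $c(S,\varepsilon)=\kappa_S\,\chi(J)(\varepsilon)$. Consequently $\sum_{S\in\mathcal U_J}c(S,\varepsilon)\,\mathcal I[\sigma\text{ respects }S]$ is a $\sigma$-dependent scalar times $\chi(J)$, whose only Fourier coefficient is $\hat c(J,J,\sigma)$, and the case $|I|<6$ follows. When $|J|\in\{5,4\}$ some labels repeat: $m_S$ drops in degree through the squared parities $\varepsilon(\ell)^2=1$, and the compatibility factors inject degree-$0$ and degree-$2$ pieces, so an individual $c(S,\varepsilon)$ acquires Fourier monomials of degree below the threshold (degree $4$ when $|J|=5$; degrees $2$ and $0$ when $|J|=4$, the degree-$0$ ones coming from a three-arrow diagram with two tip-identifications). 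The content of the lemma is that these sub-threshold monomials cancel in the sum over $\mathcal U_J$ weighted by $\mathcal I[\sigma\text{ respects }S]$. Since $\mathcal D$ is fixed, the number of diagram shapes and of ways to identify tips on $J$ is bounded independently of $k$ and $n$, and the elements of $J$ enter only through the fixed linear order they inherit from $\{1,\dots,t\}$; so the assertion collapses to a finite identity — for each admissible identification pattern and each of the $|J|!$ linear orders of the $\sigma$-values involved, the sub-threshold monomials pair off with opposite signs — which I would verify by direct enumeration, with computer assistance as for $\lambda_3$ and $\lambda_4$. This is the step that uses the particular GPV formula rather than the other order-$3$ formulas listed above: the GPV combination is a genuine knot invariant, hence insensitive to the moves relating Gauss diagrams of isotopic knots, and that is what ultimately forces the cancellations.

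The conceptual heart is thus the $|J|=6$ case; the labour — and the main obstacle — is the bookkeeping for $|J|\in\{5,4\}$: listing $\mathcal U_J$ together with the parity constraints of Proposition~\ref{order} and checking that every sub-threshold Fourier monomial is annihilated. No single slick identity collapses these cases, precisely because the outcome depends on the fine combinatorics of the chosen formula, so the real risk is combinatorial explosion and sign errors rather than a conceptual gap.
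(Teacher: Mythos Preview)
Your plan is correct and coincides with the paper's own proof: the paper likewise reduces to the finite problem $J=\{1,\dots,j\}$ for $j\in\{4,5,6\}$ and all $\sigma\in S_j$, enumerates the numbered arrow diagrams in $\mathcal U_J$ (10, 50, and 102 respectively), and verifies by computer that the sub-threshold Fourier coefficients vanish; your clean conceptual handling of $|J|=6$ is something the paper only remarks on after the proof. One correction to your closing explanation: all four listed formulas are classical knot invariants, so ``being a genuine knot invariant'' is not what distinguishes GPV---the paper observes that the lemma \emph{fails} for the other three formulas and suggests the relevant feature is that GPV extends to \emph{virtual} knots, which (together with the six-term relation) conceptually accounts for the cases $|J|=6$ and $|J|=5$, though the actual proof remains the computer check.
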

\begin{proof}
This was checked by a computer program. It is sufficient to consider $J = \{1,\dots,j\}$ for $j \leq 6$ and all $\sigma \in S_j$. 

A~priori, there are $10$ arrow diagrams if $j=6$, $50$ for $j=5$ since each number in $J$ may repeat twice, and $102$ for $j=4$: $60$ in which two numbers appear twice, $40$ with one that repeats three times, and the two $2$-arrow diagrams. The condition on $\sigma$ leaves us with some subset of those diagrams. Then we compute the discrete Fourier transform of the remaining sum as a function of $\varepsilon$, and assert that the appropriate low order coefficients vanish. The verification program can be found in the supplementary material~\cite{supplementarymaterial}.
\end{proof}

Given a pattern $\bold{T} \in \mathcal{T}_k$, we denote by $J_i \subseteq [t]$, the set of numbers that appear in the diagram $T_i$. Note that $3 \leq |J_i| \leq 6$ and $\bigcup_iJ_i=[t]$ for some $t \leq 6k$. We rewrite the $k$th moment as a sum over all such sequences of sets.
$$ E\left[v_3^k\right] \;=\; \sum\limits_{J_1,\dots,J_k} \;\sum\limits_{\substack{I_1,\dots,I_k \\ I_i \subseteq J_i}} \;\sum\limits_{\sigma \in S_t} \;\sum\limits_{\varepsilon \in \{\pm\}^t} \frac{1}{t!} \binom{n + z(\varepsilon)}{t} \;\prod\limits_{i=1}^k \;\hat{c}(I_i,J_i,\sigma) \;\chi(I_i)(\varepsilon) .$$

\item
As in the proof of $c_2$, we view the $\varepsilon$ sum as an inner product in $\mathcal{W}_t$ between $\tbinom{n+z}{t} = \sum_{r=0}^{t} \tbinom{n}{t-r} \tbinom{z}{r}$ and $\prod_i\hat{c}(I_i,J_i,\sigma)\chi(I_i)$. The $r$th summand of the former is in $\text{\emph{span}}\{\chi(I) : |I| \leq 2r\}$, and has order $n^{t-r}$. We need terms with $t-r>3k$ to vanish, so it remains to show that $\prod_i\hat{c}(I_i,J_i,\sigma)\chi(I_i)$ is in $\text{\emph{span}}\{\chi(I) : |I| > 2r\}$.

Suppose that all $J_i$'s are disjoint. By Lemma~\ref{j3masstransfer}, if $\prod_i\hat{c}(I_i,J_i,\sigma) \neq 0$ then 
$$ \deg\prod\limits_{i=1}^k\chi(I_i) \;=\; \sum\limits_{i=1}^k |I_i| \;\geq\; \sum\limits_{i=1}^k (2|J_i|-6) \;=\; 2\left|\bigcup\limits_{i=1}^k J_i\right| - 6k \;=\; 2(t - 3k) \;>\; 2r \;\;. $$
If there exists a single common $j \in J_i \cap J_{i'}$, then $|\bigcup_i J_i|$ decreases by one and the degree on the left may decrease by two, since $\varepsilon(j)^2=1$.
By iterating this argument, the degree of $\prod_i\chi(I_i)$ always remains $>2r$.\qed
\end{enumerate}

For example, the second moment of $v_3$ is a polynomial of degree $6$. We compute the entire distribution of $v_3(K_{2n+1})$ for every $0 \leq n \leq 7$, and obtain
$$ E[v_3^2] \;=\; \frac{9298n^6-1101n^5-7145n^4+2175n^3-1433n^2-1794n}{5443200} \;.$$

We note that Lemma~\ref{j3masstransfer} would fail for the other three formulas for $v_3$. This may be related to the fact that the GPV formula extends to an invariant of virtual knots. We can partly see this relation. The case $|J|=6$ of the lemma follows from the fact that the coefficient of a maximum order term in a Gauss diagram formula of a virtual knot invariant is multiplicative at the signs of the arrows. The case $|J|=5$ also follows from this property together with the $6$-term relation for such formulas. 

An interesting question is whether it is possible to extend Lemma~\ref{j3masstransfer} and hence Theorem~\ref{moments3} to every Gauss diagram formula of a virtual knot invariant. It is conjectured that every finite type invariant of classical knots is induced from such a formula~\cite{goussarov2000finite}.

\bibliographystyle{abbrv}
{ \small
\bibliography{petaluma}

\begin{thebibliography}{10}

\bibitem{adams2013bounds}
C.~Adams, O.~Capovilla-Searle, J.~Freeman, D.~Irvine, S.~Petti, D.~Vitek,
  A.~Weber, and S.~Zhang.
\newblock Bounds on \"{u}bercrossing and petal numbers for knots.
\newblock {\em arXiv preprint arXiv:1311.0526}, 2013.

\bibitem{adams2012knot}
C.~Adams, T.~Crawford, B.~DeMeo, M.~Landry, A.~T. Lin, M.~Montee, S.~Park,
  S.~Venkatesh, and F.~Yhee.
\newblock Knot projections with a single multi-crossing.
\newblock {\em arXiv preprint arXiv:1208.5742}, 2012.

\bibitem{adams1994knot}
C.~C. Adams.
\newblock {\em The knot book}.
\newblock American Mathematical Soc., 1994.

\bibitem{alvarez1996vassiliev}
M.~Alvarez and J.~Labastida.
\newblock Vassiliev invariants for torus knots.
\newblock {\em Journal of Knot Theory and its Ramifications}, 5(06):779--803,
  1996.

\bibitem{arsuaga2007linking}
J.~Arsuaga, T.~Blackstone, Y.~Diao, E.~Karadayi, and M.~Saito.
\newblock Linking of uniform random polygons in confined spaces.
\newblock {\em Journal of Physics A: Mathematical and Theoretical}, 40(9):1925,
  2007.

\bibitem{ball1981sequence}
R.~Ball and M.~Mehta.
\newblock Sequence of invariants for knots and links.
\newblock {\em Journal de Physique}, 42(9):1193--1199, 1981.

\bibitem{bellissard1997exact}
J.~Bellissard, C.~J. Camacho, A.~Barelli, and F.~Claro.
\newblock Exact random walk distributions using noncommutative geometry.
\newblock {\em arXiv preprint alg-geom/9708015}, 1997.

\bibitem{breiman1992probability}
L.~Breiman.
\newblock {\em Probability, volume 7 of Classics in Applied Mathematics}.
\newblock Society for Industrial and Applied Mathematics (SIAM), Philadelphia,
  PA, 1992.

\bibitem{brunn1897uber}
H.~Brunn.
\newblock \"{U}ber verknotete kurven.
\newblock {\em Mathematiker-Kongresses Zurich}, pages 256--259, 1897.

\bibitem{buck1994random}
G.~R. Buck.
\newblock Random knots and energy: Elementary considerations.
\newblock {\em Journal of Knot Theory and its Ramifications}, 3(03):355--363,
  1994.

\bibitem{chmutov2012introduction}
S.~Chmutov, S.~Duzhin, and J.~Mostovoy.
\newblock {\em Introduction to {V}assiliev knot invariants}.
\newblock Cambridge University Press, 2012.

\bibitem{cromwell1998arc}
P.~R. Cromwell.
\newblock Arc presentations of knots and links.
\newblock {\em Banach Center Publications}, 42:57--64, 1998.

\bibitem{dunfield2006finite}
N.~M. Dunfield and W.~P. Thurston.
\newblock Finite covers of random 3-manifolds.
\newblock {\em Inventiones Mathematicae}, 166(3):457--521, 2006.

\bibitem{duplantier1989areas}
B.~Duplantier.
\newblock Areas of planar brownian curves.
\newblock {\em Journal of Physics A: Mathematical and General}, 22(15):3033,
  1989.

\bibitem{goussarov2000finite}
M.~Goussarov, M.~Polyak, and O.~Viro.
\newblock Finite-type invariants of classical and virtual knots.
\newblock {\em Topology}, 39(5):1045--1068, 2000.

\bibitem{jeffrey2007table}
I.~Gradshteyn, A.~Jeffrey, I.~Ryzhik, and D.~Zwillinger.
\newblock {\em Table of integrals, series, and products}.
\newblock Academic Press, 2007.

\bibitem{harper1955general}
P.~Harper.
\newblock The general motion of conduction electrons in a uniform magnetic
  field, with application to the diamagnetism of metals.
\newblock {\em Proceedings of the Physical Society. Section A}, 68(10):879,
  1955.

\bibitem{hoste1998first}
J.~Hoste, M.~Thistlethwaite, and J.~Weeks.
\newblock The first 1,701,936 knots.
\newblock {\em The Mathematical Intelligencer}, 20(4):33--48, 1998.

\bibitem{kauffman1981conway}
L.~H. Kauffman.
\newblock The {C}onway polynomial.
\newblock {\em Topology}, 20(1):101--108, 1981.

\bibitem{kauffman1987knots}
L.~H. Kauffman.
\newblock {\em On knots}, volume 115.
\newblock Princeton University Press, 1987.

\bibitem{khandekar1988distribution}
D.~Khandekar and F.~Wiegel.
\newblock Distribution of the area enclosed by a plane random walk.
\newblock {\em Journal of Physics A: Mathematical and General}, 21(10):L563,
  1988.

\bibitem{kowalskicomplexity}
E.~Kowalski.
\newblock On the complexity of {D}unfield--{T}hurston random 3-manifolds.
\newblock {\em available at
  \url{www.math.ethz.ch/\string~kowalski/complexity-dunfield-thurston.pdf}},
  2010.

\bibitem{levy1951wiener}
P.~L{\'e}vy.
\newblock Wiener's random function, and other laplacian random functions.
\newblock In {\em Second Berkeley Symposium on Mathematical Statistics and
  Probability}, volume~1, pages 171--187, 1951.

\bibitem{linial2006homological}
N.~Linial and R.~Meshulam.
\newblock Homological connectivity of random 2-complexes.
\newblock {\em Combinatorica}, 26(4):475--487, 2006.

\bibitem{lubotzky2014random}
A.~Lubotzky, J.~Maher, and C.~Wu.
\newblock Random methods in 3-manifold theory.
\newblock {\em arXiv preprint arXiv:1405.6410}, 2014.

\bibitem{lutz2008combinatorial}
F.~H. Lutz.
\newblock Combinatorial 3-manifolds with 10 vertices.
\newblock {\em Beitr{\"a}ge Algebra Geom}, 49(1):97--106, 2008.

\bibitem{maher2010random}
J.~Maher.
\newblock Random {H}eegaard splittings.
\newblock {\em Journal of Topology}, 3(4):997--1025, 2010.

\bibitem{mashkevich2009area}
S.~Mashkevich and S.~Ouvry.
\newblock Area distribution of two-dimensional random walks on a square
  lattice.
\newblock {\em Journal of Statistical Physics}, 137(1):71--78, 2009.

\bibitem{micheletti2011polymers}
C.~Micheletti, D.~Marenduzzo, and E.~Orlandini.
\newblock Polymers with spatial or topological constraints: theoretical and
  computational results.
\newblock {\em Physics Reports}, 504(1):1--73, 2011.

\bibitem{milman1986asymptotic}
V.~D. Milman and G.~Schechtman.
\newblock {\em Asymptotic Theory of Finite Dimensional Normed Spaces:
  Isoperimetric Inequalities in Riemannian Manifolds}, volume 1200.
\newblock Springer, 1986.

\bibitem{mingo1998distribution}
J.~A. Mingo and A.~Nica.
\newblock On the distribution of the area enclosed by a random walk on
  {$\mathbb{Z}^2$}.
\newblock {\em Journal of Combinatorial Theory, Series A}, 84(1):55--86, 1998.

\bibitem{mohammad2010enumeration}
M.~Mohammad-Noori.
\newblock Enumeration of closed random walks in the square lattice according to
  their areas.
\newblock {\em arXiv preprint arXiv:1012.3720}, 2010.

\bibitem{pippenger1989knots}
N.~Pippenger.
\newblock Knots in random walks.
\newblock {\em Discrete Applied Mathematics}, 25(3):273--278, 1989.

\bibitem{polyak1994gauss}
M.~Polyak and O.~Viro.
\newblock Gauss diagram formulas for {V}assiliev invariants.
\newblock {\em International Mathematics Research Notices}, 1994(11):445--453,
  1994.

\bibitem{sumners1988knots}
D.~Sumners and S.~Whittington.
\newblock Knots in self-avoiding walks.
\newblock {\em Journal of Physics A: Mathematical and General}, 21(7):1689,
  1988.

\bibitem{vassiliev1988cohomology}
V.~A. Vassiliev.
\newblock Cohomology of knot spaces.
\newblock {\em Theory of singularities and its applications (Providence)},
  1:23--69, 1990.

\bibitem{willerton1997vassiliev}
S.~Willerton.
\newblock {\em On the {V}assiliev invariants for knots and for pure braids}.
\newblock PhD thesis, University of Edinburgh, 1997.

\bibitem{supplementarymaterial}
{A}ncillary files at \url{http://arxiv.org/abs/1411.3308}, 2014.

\end{thebibliography}
}

\end{document}